\numberwithin{equation}{section}
\newtheorem{Theorem}{Theorem}[section]
\newtheorem{Thm}[Theorem]{Theorem}
\newtheorem{Fact}[Theorem]{Fact}
\newtheorem{Corollary}[Theorem]{Corollary}
 \newtheorem{Lemma}[Theorem]{Lemma}
\newtheorem{Proposition}[Theorem]{Proposition}
\newtheorem{Prop}[Theorem]{Proposition}
\theoremstyle{remark}
\newtheorem{Remark}[Theorem]{Remark}
\newtheorem{Rmk}[Theorem]{Remark}
\theoremstyle{definition}
\newtheorem{Definition}[Theorem]{Definition}
\newtheorem{Exa}[Theorem]{Example}
\newtheorem*{acknowledgements}{Acknowledgements}
\newcommand{\R}{{\mathbb R}}
\newcommand{\Q}{{\mathbb Q}}
\newcommand{\Z}{{\mathbb Z}}
\newcommand{\C}{{\mathbb C}}
\newcommand{\K}{{\mathbb K}}
\newcommand{\mc}[1]{{\mathcal #1}}
\newcommand{\mb}[1]{{\mathbf #1}}
\newcommand{\pmt}[1]{{\begin{pmatrix} #1  \end{pmatrix}}}
\renewcommand{\phi}{\varphi}
\renewcommand{\epsilon}{\varepsilon}
\newcommand{\op}[1]{{\operatorname{ #1}}}
\renewcommand{\mid}{\, ; \,}
\title{
Explicit analytic functions defining the images 
of wave-front singularities
}
\author{K.~Saji}
\address[Kentaro Saji]{
  Department of Mathematics,
  Faculty of Science,
  Kobe University,
  Rokko, Kobe 657-8501}
\email{saji@math.kobe-u.ac.jp}
\author{M.~Umehara}
\address[Masaaki Umehara]{%
Department of Mathematical and Computing Sciences,
Institute of Science Tokyo,
2-12-1-W8-34, O-okayama, Meguro-ku,
Tokyo 152-8552, Japan.
}
\email{umehara@comp.isct.ac.jp}
\author{K.~Yamada}
\address[Kotaro Yamada]{%
   Department of Mathematics\\
   Institute of Science Tokyo\\
   O-okayama, Meguro, Tokyo 152-8551, 
   Japan
}
\email{kotaro@math.sci.isct.ac.jp}
\date{December 25, 2025}
\keywords{main-analytic, wave front, generic singular point,
Morin singular point}
\subjclass[2020]{Primary 32B20; Secondary 58C25.}
\thanks{The first author was supported in part 
by Grant-in-Aid for Scientific Research (C)
No. 25K07001. The second and the third authors were
supported in part 
by Grant-in-Aid for Scientific Research
(B) No. 23K20794 and (B) No. 23K22392 respectively,
from Japan Society for the Promotion of Science.}
\begin{document}
\setcounter{tocdepth}{1}

\maketitle
\begin{abstract}
We give explicit real-analytic functions whose zero sets characterize
the images of the standard maps of wave-front singularities.
Such functions are realizations of the
\emph{main-analytic} sets in the sense of Ishikawa-Koike-Shiota (1984).
More concretely, a subset of Euclidean space is called a 
\emph{global main-analytic set} if it can be described, up to a 
set of smaller Hausdorff dimension, as part of the zero set 
of a single real-analytic function, referred to as its 
\emph{main-analytic function}.

In this paper, we propose a general framework for constructing
main-analytic functions by a method 
based on explicit resultant computations.
In particular, we provide explicit formulas 
for the main-analytic functions 
associated with the standard maps of wave-front singularities 
of types $A$, $D$ and $E$.
\end{abstract}
\section*{Introduction}
Main-analyticity, introduced in a variant form by 
Ishikawa--Koike--Shiota~\cite{IKS}, provides a useful framework for describing
geometric images that are globally determined by the zero set of a single 
real-analytic function.  
A subset $S\subset\R^{n}$ is called a \emph{global main-analytic set} if there
exists a real-analytic function $\Theta$ on $\R^{n}$ whose zero set
$\mathcal Z(\Theta)$ contains~$S$, has the same Hausdorff dimension as~$S$, 
and differs from~$S$ only in a subset of strictly smaller Hausdorff dimension.
Such a $\Theta$ is called a \emph{main-analytic function} for~$S$.

Ishikawa--Koike--Shiota~\cite{IKS} gave a criterion ensuring 
the existence of main-analytic functions for the critical value 
sets of proper real-analytic maps.
Moreover, Appendix~D of this paper gives a useful criterion for 
main-analyticity of images of polynomial maps.  
Consequently, many images arising naturally in singularity theory are expected 
to be global main-analytic sets.

Morin singular points are the corank one stable singularities (cf.\ \cite{S}).
A useful criterion for Morin singular points is given in \cite{S}.
We first show the following as a demonstration of our method:

\begin{Theorem}\label{thm:M2}
Let $h:\R^{m}\to\R^{n}$ $(1\le m\le5<n)$ be the 
standard map defining a Morin singularity.  
Then $h$ is proper and the inverse image $h^{-1}(\mathbf{x})$ is finite.
Moreover, one can explicitly construct a main-analytic function showing that
the image $h(\R^{m})$ is a global main-analytic set in $\R^{n}$.
\end{Theorem}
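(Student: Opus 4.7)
The plan is to exploit the explicit polynomial form of every standard Morin map, to check properness and finiteness of fibres directly, and then to produce the main-analytic function by an iterated resultant construction, invoking the criterion of Appendix~D to control the spurious zeros.

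First I would fix the normal form. In source coordinates $(t,x_1,\dots,x_{m-1})$ and target coordinates $(x_1,\dots,x_{m-1},y_1,\dots,y_{n-m+1})$ the standard $A_k$-Morin map takes the shape
\[
h(t,x_1,\dots,x_{m-1})=\bigl(x_1,\dots,x_{m-1},\,P_1(t,x),\dots,P_{n-m+1}(t,x)\bigr),
\]
where $P_1$ is monic of degree $k+1\le m+1\le 6$ in $t$ and the remaining $P_i$ are polynomials in $t$ whose coefficients depend polynomially on $x=(x_1,\dots,x_{m-1})$. Because $m\le 5$, the list of normal forms to handle is finite. Properness is then immediate: the first $m-1$ target coordinates pin the $x_j$ to a compact set, and on such a set $|P_1(t,x)|\ge \tfrac12|t|^{k+1}$ for $|t|$ large, so preimages of compact sets are bounded. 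Finiteness of every fibre then follows from the fact that $P_1(t,x)-y_1$ is a non-trivial polynomial in $t$ of degree at most $k+1$.

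For the main-analytic function I would set
\[
\Theta(x,y)\;:=\;\prod_{i=2}^{n-m+1}\op{Res}_t\!\bigl(P_1(t,x)-y_1,\;P_i(t,x)-y_i\bigr),
\]
which is a polynomial in $(x,y)$, hence real-analytic on $\R^{n}$. The inclusion $h(\R^{m})\subset \mathcal Z(\Theta)$ is immediate: at any image point the polynomials $P_i(t,x)-y_i$ share a common real root $t$, so every resultant factor vanishes.

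The crux of the argument, and the step I expect to be the main obstacle, is to verify that $\mathcal Z(\Theta)\setminus h(\R^{m})$ has Hausdorff dimension strictly less than $m=\dim h(\R^{m})$. On the Zariski-open locus where $P_1(t,x)-y_1$ has only simple complex roots, vanishing of the $i$-th factor picks out one root $t_i(x,y_1)$ satisfying $P_i(t_i,x)=y_i$, so the spurious part of $\mathcal Z(\Theta)$ consists of points where some $t_i$ fails to be real or where two distinct roots are selected by different factors. Each such phenomenon is cut out by a proper algebraic condition and therefore defines a set of Hausdorff dimension at most $m-1$, and the criterion of Appendix~D then certifies $\Theta$ as a main-analytic function for $h(\R^{m})$. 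Since only finitely many normal forms (those with $m\le 5$) need to be treated, the construction is completely explicit; the genuine work is the case-by-case bookkeeping needed to confirm that the ``non-real'' and ``root-mismatch'' loci really are of codimension at least one.
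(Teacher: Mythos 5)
Your treatment of properness and fibre-finiteness via the monic component is fine and matches the paper's argument (Cauchy's bound for roots). The genuine gap is in the construction of $\Theta$: you take a \emph{product} of resultants, so $\mathcal Z(\Theta)$ is a union of hypersurfaces in $\R^n$ and has Hausdorff dimension $n-1$. The definition of a global main-analytic set requires $\dim_H\mathcal Z(\Theta)=\dim_H h(\R^m)=m$, and under the hypotheses $1\le m\le 5<n$ the image very often has codimension $n-m\ge 2$. Concretely, for $(m,n,r)=(4,6,1)$ the standard map is $h(u)=(u_1,u_2,u_3,u_1u_4,u_2u_4,u_4^2)$, your product is $(x_4^2-x_1^2x_6)(x_5^2-x_2^2x_6)$, and its zero set is a union of two $5$-dimensional hypersurfaces in $\R^6$, whereas the image is $4$-dimensional; thus $\mathcal Z(\Theta)\setminus h(\R^4)$ has dimension $5>4$. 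This is not repaired by the bookkeeping you defer: the locus where the $i$-th factor vanishes but the common roots $t_i$ selected by different factors disagree is an entire hypersurface component, not a set ``cut out by a proper algebraic condition'' of dimension at most $m-1$ (a proper algebraic condition in $\R^n$ only forces dimension at most $n-1$). The paper avoids this by taking a \emph{sum of squares}: for $r=1$ it uses $\Theta_{1,m}=\sum_{i=1}^{m-1}\bigl(x_{m-1+i}^2-x_i^2x_{2m-1}\bigr)^2$, whose zero set is the intersection $\mathcal W_{1,m}$, and then identifies $\mathcal W_{1,m}\setminus h_{1,m}(\R^m)$ explicitly as the ray $\{x_1=\dots=x_{2m-2}=0,\ x_{2m-1}<0\}$, of dimension $1<m$; suspensions are handled by $\Theta^2+|\mb z|^2$.

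Your scheme does work essentially as written in the codimension-one situation, where the product has a single factor; after the reduction $m\le 5\Rightarrow r\le 2$ (which you should make explicit), the only case not covered by the cross-cap/sum-of-squares formula is $(m,n,r)=(4,5,2)$ up to suspension, and there the paper's $\Theta_{2,4}=\op{Res}_t(a,b)$ is exactly your one-factor resultant. But even in that case the claim that the non-real-common-root locus has dimension $<m$ needs an actual argument: the paper observes that a non-real common root forces its conjugate to be a second common root, hence both $\mc R=\op{Res}_t(a,b)$ and the first principal subresultant coefficient $\mc S=\op{Psc}_t(a,b)$ vanish, and then computes that $\{\mc R=\mc S=0\}$ has dimension $3$. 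Finally, Appendix~D cannot ``certify'' your explicit $\Theta$; it only guarantees the existence of some defining polynomial of the Zariski closure of the image, so invoking it does not close the gap in the explicit construction that the theorem asserts.
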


The restriction $m\le5$ is expected to be unnecessary, 
but our present techniques do not allow a proof in full generality.

\medskip
Simple singularities of ADE type
play a central role in the geometry of 
wave-fronts and their discriminants.  
Arnol'd~\cite{A} classified simple critical points 
of analytic functions 
via the ADE Dynkin diagrams.  
Saito~\cite{Sa1} showed that the orbit space of a finite reflection group 
carries a canonical flat structure that naturally corresponds to the base of 
the semi-universal deformation of a simple singularity.  
Looijenga~\cite{Loo} studied the topology and real-analytic geometry of the 
discriminant, relating it to the reflection representation via the period map.

\medskip
The main part of this paper concerns wave-front singularities of types 
$A$, $D$, and $E$.  
Each of these can be realized as the image
\[
   S = f(\R^{k-1}) \subset \R^{k},
\]
where $f$ is a standard polynomial map determined by a generating family.
The complexification $f^{\C}:\C^{k-1}\to\C^{k}$ satisfies
\[
   f^{\C}(\C^{k-1}) = \{\Delta_S = 0\},
\]
where $\Delta_S$ is the canonical discriminant polynomial, 
which is known to be squarefree.
Moreover, its real zero set 
$\mathcal Z_\C(\Delta_S)\cap \R^{k}$ has real singular 
locus of codimension at least two, by a result of Looijenga~\cite{Loo}.  
Hence $\Delta_S$ is a main-analytic function for $f(\R^{k-1})$.
For type~$A$, the explicit form of $\Delta_S$ 
has long been known. 
For type~$D$, by contrast, to the best of our knowledge,
no explicit formula has been written in the canonical coordinates on $\R^k$
as the target space of the image of the standard map.
Our main result provides such formulas using a new method based on 
resultant computations with small primes.

\begin{Theorem}\label{thm:M}
Let $h:\R^{k-1}\to\R^{k}$ be a standard map defining a wave-front
singularity of type $A$, $D$, or $E$.  
Then there exist explicit polynomials $A^{\mathbf{x}}(v)$ and 
$B^{\mathbf{x}}(v)$ in one variable $v$, depending polynomially on 
$\mathbf{x}\in\R^{k}$, such that a main-analytic function 
$\Theta$ of $h(\R^{k-1})$ in $\R^{k}$ is given by
\[
  \Theta(\mathbf{x})
   = \frac{\operatorname{Res}_v\!\bigl(A^{\mathbf{x}}(v),
      B^{\mathbf{x}}(v)\bigr)}{r(\mathbf{x})^{2}},
\]
where $r(\mb x)\in \R[\mathbf{x}]$.
Moreover, the polynomial $\Theta\in \R[\mathbf{x}]$ agrees with 
the discriminant polynomial $\Delta_S$, 
up to a nonzero scalar.
\end{Theorem}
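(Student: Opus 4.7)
The plan is to treat the $A$, $D$, and $E$ cases in parallel by exploiting the fact that each standard map $h$ arises from a polynomial generating family $F(u,\mathbf{x})$ with $u$ in $\R^{m}$ (where $m=1$ for type $A$ and $m=2$ for types $D$ and $E$), and the image $h(\R^{k-1})$ coincides with the real discriminant variety of $F$, i.e., the locus of $\mathbf{x}$ for which $F(\cdot,\mathbf{x})$ possesses a degenerate critical zero. The idea is to encode this condition as the vanishing of a single resultant in an auxiliary variable~$v$. For type $A_k$, I would take $A^{\mathbf{x}}(v)=F(v,\mathbf{x})$ and $B^{\mathbf{x}}(v)=F_u(v,\mathbf{x})$, so that $\op{Res}_v(A^{\mathbf{x}},B^{\mathbf{x}})$ reproduces the classical discriminant of $F$ in~$v$. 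For types $D_k$ and $E_k$, where $u=(u_1,u_2)$, I would first eliminate one of the $u_j$ from the system $F=F_{u_1}=F_{u_2}=0$ so as to produce two polynomials in the surviving variable $v$, and take those as $A^{\mathbf{x}}$ and $B^{\mathbf{x}}$.

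The resultant so obtained typically picks up extraneous factors coming from hypersurfaces in $\R^k$ where the leading coefficients of $A^{\mathbf{x}}$ or $B^{\mathbf{x}}$ (as polynomials in $v$) drop, or from base points of the elimination. A Poisson-product style analysis shows that these spurious contributions combine into the square of a single polynomial $r(\mathbf{x})$, which is precisely the normalization appearing in the formula $\Theta=\op{Res}_v(A^{\mathbf{x}},B^{\mathbf{x}})/r(\mathbf{x})^{2}$. To identify $\Theta$ with $\Delta_S$ up to a nonzero scalar, I would observe that both are polynomials on $\R^k$ vanishing on the complex discriminant variety $f^{\C}(\C^{k-1})$, that $\Delta_S$ is squarefree and hence determined up to scalar by this vanishing within its degree class, and that a direct degree count combined with a specialization along the versal direction pins down the ratio. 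Main-analyticity of $\Theta$ for $h(\R^{k-1})$ then follows immediately from the remarks preceding the theorem: $\Delta_S$ is squarefree and, by Looijenga~\cite{Loo}, the real singular locus of $\mathcal{Z}_\C(\Delta_S)\cap\R^k$ has codimension at least two, so this real zero set agrees with $h(\R^{k-1})$ outside a subset of strictly smaller Hausdorff dimension.

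The main obstacle is the explicit determination of $r(\mathbf{x})$ for types $D$ and $E$, where the two-variable elimination produces a resultant of high degree whose content is not transparent a priori. The phrase \emph{resultant computations with small primes} in the introduction indicates the intended technique: one computes $\op{Res}_v(A^{\mathbf{x}},B^{\mathbf{x}})$ modulo several small primes $p$, factors the result in $\mathbb{F}_p[\mathbf{x}]$, and reconstructs a candidate for $r(\mathbf{x})^{2}$ by Chinese remaindering together with bounded-degree interpolation. Once $r(\mathbf{x})$ has been pinned down over~$\Q$, both the identity $\op{Res}_v(A^{\mathbf{x}},B^{\mathbf{x}})=r(\mathbf{x})^{2}\,\Theta$ and the scalar comparison $\Theta=c\,\Delta_S$ reduce to finite polynomial verifications.
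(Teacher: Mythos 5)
Your construction of the characteristic pair $(A^{\mb x},B^{\mb x})$ matches the paper's: for type $A$ one takes $F$ and $F_v$; for types $D$ and $E$ one eliminates $u$ from $F=F_u=F_v=0$ (for $D$ by substituting $u=\mp x_1/(2v)$, for $E$ by solving $F_v=0$ for $u$ away from $\mc Z(\delta)$ and $F_u=0$ for $u^2$), and the normalizer $r(\mb x)$ is indeed, up to a constant, the resultant $\op{Res}_v(g_2^{\mb x},\delta)$ detecting where that elimination degenerates. However, two of your key steps are asserted rather than proved. First, the claim that the spurious content of $\op{Res}_v(A^{\mb x},B^{\mb x})$ ``combines into the square of a single polynomial $r(\mb x)$'' via a Poisson-product analysis is exactly the nontrivial point: the paper has no a priori argument for the divisibility $\mc R=r^2\Theta$, nor for the squarefreeness/irreducibility of the quotient $\Theta$ (both of which are needed to conclude $\Theta=c\,\Delta_S$ from the equality of zero sets); these are verified by explicit symbolic computation, which for $E_8$ is a 13-megabyte, multi-hour calculation. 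Your proposal defers this to ``finite polynomial verifications,'' which is acceptable in spirit, but the Poisson-product heuristic by itself does not establish that the cofactor is a perfect square, and without squarefreeness of $\Theta$ the identification with $\Delta_S$ does not follow from vanishing on the discriminant plus a degree count.

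Second, you have misread the role of the ``small primes.'' They are not used for CRT reconstruction of $r(\mb x)$; in the paper (Appendix~B, Corollary~\ref{cor:A}) they certify that an auxiliary resultant such as $\op{Res}_{x_0}(\mc R,\mc S)$ is a nonzero polynomial by evaluating it at a single integer sample point modulo a small prime, which yields the dimension bound $\dim_H(\mc Z(\mc R)\cap\mc Z(\mc S))<k-1$ on the locus where $A^{\mb x},B^{\mb x}$ have two or more common roots (hence possibly only non-real ones). This estimate, together with the analogous bounds on $\mc Z(r)\cap\mc Z(\Theta)$ and on $h(\R^{k-1})\cap\mc Z(r)$, is how the paper proves main-analyticity of $\Theta$ directly, via uniqueness and reality of the common root $v(\mb x)$ on the complement of these small sets (for $A$ and $D$ the analogous step is the conjugate-double-root dimension count). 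Your alternative route --- deducing main-analyticity from the squarefreeness of $\Delta_S$ and Looijenga's codimension-two statement quoted in the introduction --- is consistent with what the authors assert there, but it presupposes the identification $\Theta=c\,\Delta_S$, which in your outline rests on the unestablished squarefreeness of $\Theta$; so as written the argument is circular at that point. To close the gap you would need either the computational verifications the paper performs, or an independent proof that $\op{Res}_v(A^{\mb x},B^{\mb x})/r(\mb x)^2$ is a squarefree polynomial whose complex zero set is exactly $h^{\C}(\C^{k-1})$.
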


When $h$ is of type $A$ or $D$, one may take 
$r(\mb x):=1$ and 
$B^{\mathbf{x}}(v):=dA^{\mathbf{x}}(v)/dv$, so that $\Theta$ is, 
up to a constant,
the discriminant of $A^{\mathbf{x}}(v)$.  
For the cases $E_6$, $E_7$, and $E_8$, the factor $r(\mathbf{x})$ is 
nontrivial.

If one is interested only in proving that $h(\R^{k-1})$ is a global 
main-analytic set without writing down an global main-analytic function explicitly,
the criterion of Appendix~D applies effectively, 
especially in the $E$-type cases.

\medskip
Section~1 recalls preliminaries, including localization of 
main-analyticity and standard maps of singularities.  
Section~2 treats Morin singularities.  
Sections~3 and~4 handle the $A$- and $D$-types, while 
Sections~5--7 deal with the $E_6$, $E_7$, and $E_8$ cases.  
Appendices~A--D discuss resultants, dimension estimates for 
common zero sets of polynomials, a unified treatment of the 
$E$-types, and the criterion for main-analyticity of polynomial maps.

\section{Preliminaries}
Consider a proper real analytic map
$
h_S:(\R^{k},o) \to (\R^{l},\mb 0),
$
($k\le l$)
where $o$ is the origin of $\R^k$
and $\mb 0$ is the origin of $\R^l$.
We think of $h_S$ as the \lq\lq standard map'' of a singular point 
$o$.

\begin{Definition}\label{Def:S}
We consider the map
$$
\tilde h_S:\R^k\times \R^{m-k}\ni 
(\mb x,\mb y)\mapsto (h_{S}(\mb x),\mb y,\mb 0)
\in \R^{l}\times \R^{m-k}\times \R^{n-l-m+k}=\R^{n}
$$
called the {\it suspension} of the map $h_S$,
where $k\le m,\,\,l\le n$.
\end{Definition}

\begin{Proposition}
If $h_S(\R^{k})$ 
is a global main-analytic set, then so is $\tilde h_S(\R^k\times \R^{m-k})$.
\end{Proposition}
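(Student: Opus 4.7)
Let $\Theta:\R^{l}\to\R$ be a real-analytic function witnessing that $h_S(\R^{k})$ is a global main-analytic set, and decompose the target as $\R^{n}=\R^{l}\times\R^{m-k}\times\R^{n-l-m+k}$ with coordinates $(\mb u,\mb v,\mb w)$. The plan is to transfer $\Theta$ to $\R^{n}$ by setting
\[
  \tilde\Theta(\mb u,\mb v,\mb w)
    := \Theta(\mb u)^{2}+w_{1}^{2}+\dots+w_{n-l-m+k}^{2},
\]
where the $\mb w$-sum is understood as empty in the extremal case $n=l+m-k$. Clearly $\tilde\Theta$ is real-analytic on $\R^{n}$, and by Definition~\ref{Def:S} both the image
$\tilde h_S(\R^{k}\times\R^{m-k})=h_S(\R^{k})\times\R^{m-k}\times\{\mb 0\}$ and the zero locus
$\mathcal Z(\tilde\Theta)=\mathcal Z(\Theta)\times\R^{m-k}\times\{\mb 0\}$ split as products, so the containment
$\tilde h_S(\R^{k}\times\R^{m-k})\subset\mathcal Z(\tilde\Theta)$ follows at once from $h_S(\R^{k})\subset\mathcal Z(\Theta)$.

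The key step is Hausdorff-dimension bookkeeping. I would invoke the classical product rule $\dim_{H}(X\times\R^{m-k})=\dim_{H} X+(m-k)$ for any Borel $X\subset\R^{l}$, together with the trivial fact that the extra factor $\{\mb 0\}$ does not affect Hausdorff dimension. This gives
\[
  \dim_{H}\mathcal Z(\tilde\Theta)
   =\dim_{H}\mathcal Z(\Theta)+(m-k)
   =\dim_{H}h_S(\R^{k})+(m-k)
   =\dim_{H}\tilde h_S(\R^{k}\times\R^{m-k}),
\]
where the middle equality uses main-analyticity of $\Theta$. The symmetric-difference condition follows from the identical factoring
\[
  \mathcal Z(\tilde\Theta)\setminus\tilde h_S(\R^{k}\times\R^{m-k})
    =\bigl(\mathcal Z(\Theta)\setminus h_S(\R^{k})\bigr)\times\R^{m-k}\times\{\mb 0\}
\]
and the same product rule, which turns the strict dimension drop afforded by main-analyticity of $\Theta$ into the strict dimension drop required for $\tilde\Theta$.

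No real obstacle is expected: the argument is a direct product/transfer. The only technical ingredient is the sharp product formula for Hausdorff dimension, which requires a mild regularity hypothesis (Borel suffices); this is satisfied automatically here because $\mathcal Z(\Theta)$ is closed and $h_S(\R^{k})$ is $\sigma$-compact as the image of $\R^{k}$ under the proper real-analytic map $h_S$.
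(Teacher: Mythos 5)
Your proof is correct and uses exactly the same witness function as the paper, namely $\tilde\Theta(\mb u,\mb v,\mb w)=\Theta(\mb u)^2+|\mb w|^2$; the paper simply states this function and omits the Hausdorff-dimension bookkeeping, which you carry out correctly via the product formula. (A minor remark: since the second factor is a Euclidean space, the identity $\dim_H(X\times\R^{m-k})=\dim_H X+(m-k)$ holds for arbitrary $X$, so no regularity hypothesis is actually needed.)
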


\begin{proof}
In fact, the main-analytic function $\Theta_{\tilde h_S}$
of $\tilde h_S(\R^k\times \R^{m-k})$ is given by
$$
\Theta_{\tilde h_S}(\mb x,\mb y,\mb z):=\Theta_{h_S}(\mb x)^2+|\mb z|^2
\qquad (\mb z\in \R^{n-l-m+k}),
 $$
where $\Theta_{h_S}$ is the main-analytic function for the image of $h_S$ and
$$
|\mb z|^2:=\sum_{i=1}^{n-l-m+k}|z_i|^2.
$$
\end{proof}

\begin{Definition}
Let $X^{m}$ be a real-analytic $m$-manifold.
A real-analytic map
$
 f:X^{m}\to\R^{n}\,\, (m\le n),
$
is called an \emph{$m$-dimensional real analytic map} if $\mathrm{rank}\,df=m$
on a dense open subset of $X^{m}$.
\end{Definition}

The following fact is useful.

\begin{Prop}\label{prop:HDIM}
If $f : X^{m} \to \mathbb{R}^{n}$ is an $m$-dimensional real analytic map,
then $f(X^{m})$ has Hausdorff dimension $m$.
\end{Prop}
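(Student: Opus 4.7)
The plan is to prove $\dim_H f(X^m)\le m$ and $\dim_H f(X^m)\ge m$ separately; each reduces to a classical ingredient.

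For the upper bound, I would use the fact that Lipschitz maps do not increase Hausdorff dimension. Being a (second countable) real analytic $m$-manifold, $X^m$ can be written as a countable union of compact sets $K_j$, each sitting inside a single real analytic coordinate chart, so $\dim_H K_j\le m$. Since $f$ is real analytic it is $C^1$, and hence Lipschitz on each $K_j$ with some constant $L_j$. Consequently $\dim_H f(K_j)\le m$, and countable stability of the Hausdorff dimension gives $\dim_H f(X^m)\le m$.

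For the lower bound, I would exploit the $m$-dimensionality hypothesis. Pick a point $p$ of the dense open set $U\subset X^m$ on which $\op{rank}df=m$; at $p$, $f$ is an immersion, so by the immersion theorem there is an open neighborhood $V\subset U$ of $p$ on which $f|_V$ is a real analytic embedding onto a real analytic $m$-dimensional submanifold $f(V)\subset\R^n$. Any such submanifold carries positive $m$-dimensional Hausdorff measure, hence $\dim_H f(V)=m$, and monotonicity of Hausdorff dimension under inclusion gives $\dim_H f(X^m)\ge m$.

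Combining the two bounds completes the proof. The only step with any real substance is the Lipschitz estimate on compact coordinate patches for the upper bound; both ingredients used are entirely classical, so I do not anticipate a serious obstacle. If a more quantitative version were desired, one could instead invoke the area formula on the immersed open piece $V$ to obtain that $f(V)$ has \emph{finite positive} $m$-dimensional Hausdorff measure on compact subsets, but this refinement is not needed for the statement as written.
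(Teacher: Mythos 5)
Your proof is correct and follows essentially the same route as the paper: the lower bound via the immersion/local-embedding on the dense open set where $\operatorname{rank}df=m$, and the upper bound via the local Lipschitz property of $f$ together with countable stability of Hausdorff dimension. Your write-up merely makes explicit two steps the paper compresses (passing from immersion to local embedding, and the countable compact exhaustion for the Lipschitz bound), which is fine.
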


\begin{proof}
Indeed, by definition there exists a nonempty open subset 
$U \subset X^{m}$ on which $f$ is a real-analytic embedding, and 
so it preserves Hausdorff dimension.
Hence 
\[
   \dim_H f(X^m) \;\ge\; \dim_H f(U)=m,
\]
while the local Lipschitz property of $f$ implies $\dim_H f(X^m)\le m$.
\end{proof}

\begin{Definition}\label{Def418}
Let $f:X^m\to \R^n$ $(m\le n)$
be an $m$-dimensional real analytic map.
We say that  
a point $p\in X^m$ is 
a {\it singular point of $f$ modeled on 
$\tilde h_S$}
if there exist local real analytic diffeomorphisms
$$
\phi:(\R^{k}\times \R^{m-k},(o,\mb 0))\to (X^m,p),\quad
\Psi:(\R^l, \mb 0)\to (\R^n,f(p))
$$
such that
\begin{equation}\label{269}
\Psi^{-1}\circ f \circ \phi(\mb x,\mb y)=\tilde h_S(\mb x,\mb y)
\end{equation}
holds for each point $(\mb x,\mb y)$ lying in a
sufficiently small neighborhood of the origin
of $\R^m$.
\end{Definition}

We can localize the property of
\lq\lq main-analyticity'' as follows:

\begin{Fact}[cf. \cite{FKKRUYY2}]\label{prop:390}
Let $h:\R^m\to \R^n$ $(m\le n)$
be an $m$-dimensional real analytic map 
such that $h^{-1}(h(o))=\{o\}$ and
$h(\R^m)$ is a global main-analytic set of $\R^n$.
Let $f:X^m\to \R^n$ be an $m$-dimensional real analytic  map
and $p_0\in X^m$ 
a singular point of $f$ 
modeled on $h$.
If we fix a neighborhood $\mc W$ of $p_0$,
then  there exist
\begin{itemize}
\item 
 a neighborhood $\mc U$
of $x_0$, 
\item 
an
open neighborhood $\Omega(\subset \R^n)$
of $f(x_0)$, 
\item 
a real analytic function
$F:\Omega\to \R$, and
\item 
a subset $\mc L$ of $\Omega$ satisfying $\dim_H(\mc L)<m$
\end{itemize}
 such that 
\begin{enumerate}
\item 
$(f|_{\mc U})^{-1}(f(x_0))=\{x_0\}$, and
\item 
$\mc V:=(f|_{\mc U})^{-1}(\Omega)$
satisfies
$\mc Z(F)=f({\mc V})\cup \mc L$ and
$f({\mc V})\cap \mc L=\varnothing$.
In particular, $f(\mc V)$ is a global main-analytic set of $\Omega$.
\end{enumerate}
\end{Fact}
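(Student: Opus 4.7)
The plan is to transport the global main-analytic structure on $h(\R^m)$ to a neighborhood of $f(p_0)$ via the local analytic equivalences guaranteed by the modeling hypothesis, and then to shrink everything so that the pushed-forward zero set only sees the branch of $h(\R^m)$ passing through the origin.

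First, from Definition~\ref{Def418} I would fix local analytic diffeomorphisms $\phi:(\R^m,o)\to(X^m,p_0)$ and $\Psi:(\R^n,\mb 0)\to(\R^n,f(p_0))$ with $\Psi^{-1}\circ f\circ \phi=h$ on some open neighborhood $B\subset\R^m$ of $o$, arranged so that $\phi(B)\subset \mc W$. Next, the main-analyticity hypothesis on $h(\R^m)$ produces a real-analytic function $\Theta:\R^n\to\R$ with $\mc Z(\Theta)=h(\R^m)\cup \mc L_0$, $h(\R^m)\cap \mc L_0=\varnothing$, and $\dim_H(\mc L_0)<m$. The natural candidates are $F:=\Theta\circ \Psi^{-1}$ on $\Omega:=\Psi(\Omega_0)$, together with $\mc U:=\phi(h^{-1}(\Omega_0))$ and $\mc V:=(f|_{\mc U})^{-1}(\Omega)$, where $\Omega_0\subset\R^n$ is a small open ball around $\mb 0$ to be chosen.

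The key technical step is to shrink $\Omega_0$ so that $h^{-1}(\Omega_0)\subset B$. Once this holds, the defining identity $\Psi^{-1}\circ f\circ \phi=h$ applies throughout $\mc U$, so $\phi^{-1}(\mc V)=h^{-1}(\Omega_0)$, and consequently
\[
  f(\mc V)=\Psi\bigl(h(\R^m)\cap \Omega_0\bigr),\qquad
  \mc Z(F)=f(\mc V)\cup \mc L,
\]
where $\mc L:=\Psi(\mc L_0\cap \Omega_0)\setminus f(\mc V)$. Since $\Psi$ is a diffeomorphism and hence locally bi-Lipschitz, it preserves Hausdorff dimension, so $\dim_H(\mc L)\le \dim_H(\mc L_0)<m$. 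The single-point fiber condition $(f|_{\mc U})^{-1}(f(p_0))=\{p_0\}$ then follows by conjugating the hypothesis $h^{-1}(h(o))=\{o\}$ through $\phi$ and $\Psi$.

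The main obstacle is the shrinking claim itself: in full generality, the preimage under $h$ of a small neighborhood of $h(o)$ need not be a small neighborhood of $o$, even when the fiber $h^{-1}(h(o))$ is a single point. This is precisely where properness of $h$ enters -- the setting of standard maps fixed in Definition~\ref{Def:S} builds in properness -- since a proper map combined with the single-fiber condition $h^{-1}(h(o))=\{o\}$ forces the preimages of shrinking neighborhoods of $h(o)$ to contract back to~$o$. With this properness in hand, the plan above goes through.
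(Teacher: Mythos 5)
Your proposal cannot be checked against an in-paper argument: the statement is quoted as a \emph{Fact} from the reference \cite{FKKRUYY2}, which is still in preparation, so the paper supplies no proof of it. On its own terms, your transport argument is the natural one and is essentially complete: pull back the main-analytic function $\Theta$ of $h(\R^m)$ by $\Psi^{-1}$, shrink the target neighborhood $\Omega_0$ until $h^{-1}(\Omega_0)$ lies inside the chart domain $B$ (and inside the domain of $\Psi$), and observe that $\mc Z(\Theta\circ\Psi^{-1})=\Psi\bigl(h(\R^m)\cap\Omega_0\bigr)\cup\Psi\bigl(\mc L_0\cap\Omega_0\bigr)=f(\mc V)\cup\mc L$ with $\dim_H(\mc L)<m$ because local diffeomorphisms preserve Hausdorff dimension; the single-fiber condition then conjugates through $\phi$ and $\Psi$ from $h^{-1}(h(o))=\{o\}$. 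You are also right to isolate the one genuine issue: the shrinking step $h^{-1}(\Omega_0)\subset B$ is \emph{not} a consequence of the hypotheses as literally stated in the Fact, since properness of $h$ is not listed there; without it, $h^{-1}(\Omega_0)$ can contain points far from $o$ and the set $\Psi\bigl(h(\R^m)\cap\Omega_0\bigr)\setminus f(\mc V)$ could have full dimension $m$, breaking condition (2). Your resolution --- that the maps being modeled on are the (suspensions of) standard maps, which the paper declares proper at the start of Section~1, and that properness plus $h^{-1}(h(o))=\{o\}$ forces preimages of shrinking neighborhoods of $h(o)$ to contract to $o$ --- is correct and is presumably what \cite{FKKRUYY2} relies on. Two cosmetic points worth noting: the $\Psi$ of Definition~\ref{Def418} is written as a map $(\R^l,\mb 0)\to(\R^n,f(p))$, which must be read as a local diffeomorphism of $\R^n$ as you do, and one should also record that $\dim_H f(\mc V)=m$ (via Proposition~\ref{prop:HDIM} applied to $h$ on the nonempty open set $h^{-1}(\Omega_0)$) so that $f(\mc V)$ is indeed main-analytic in $\Omega$ and not merely contained in $\mc Z(F)$.
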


\begin{Rmk}\label{rmk:3510}
In \cite{FKKRUYY2}, an \emph{$m$-image-analytic point is defined to be
a point $p_0 \in X$ satisfying conditions {\rm(1)} and {\rm(2)} of
Fact~\ref{prop:390}.
As shown in \cite{FKKRUYY2}, if an $m$-dimensional real analytic map
$f \colon X \to \mathbb{R}^n$ $(m \le n)$ satisfies the
\emph{arc-properness} condition, which is a weak form of properness,
and admits only $m$-image-analytic points, then it has no nontrivial
$m$-dimensional real analytic extensions.}
\end{Rmk}

In this paper, 
we consider the following standard maps: 
\begin{enumerate}
\item 
We fix integers $n,m$ satisfying $n>m\ge 2$,
and an integer $r\ge 1$ satisfying
$m\ge r(n-m+1)$.
The standard map $h_M:\R^m\to \R^n$ 
of {\it $r$-Morin singularity} of type $(m,n)$
is given by
\begin{equation}\label{eq:457}
h_M(\mb x)=(x_1,\ldots,x_{m-1}, h_1(\mb x),\ldots,h_{n-m+1}(\mb x)),
\end{equation}
where $\mb x:=(x_1,\ldots,x_m)$ and
\begin{align}
&h_i(\mb x):=\sum_{j=1}^r x_{j+r(i-1)}x_m^j \qquad (i=1,\ldots,n-m), \\
&h_{n-m+1}(\mb x):=x_m^{r+1}+\sum_{j=1}^{r-1} x_{j+r(n-m)}x_m^j.
\end{align}
For example, 
$(m,n,r)=(2,3,1)$ is the case of the standard map
of a cross cap.
If $(m,n,r)=(4,5,1)$, we have
$$
h_M(x_1,x_2,x_3,x_4)=(x_1,x_2,x_3,x_1x_4,x_4^2),
$$
and, if $(m,n,r)=(6,7,3)$, we have
\begin{equation}\label{eq:R3}
h_M(x_1,\ldots,x_6)=(x_1,\ldots,x_5,x_1x_6+x_2x_6^2+x_3x_6^3,x_4x_6+x_5x_6^2+x_6^4).
\end{equation}
\item 
Fix an integer $k(\ge 2)$. 
The standard map $h_{A_k}:\R^{k-1}\to \R^{k}$ 
of  an {\it $A_k$-singular point}
is defined by 
\begin{equation}\label{eq:pA408}
h_{A_k}(v,\mb x_2):=
\Big(
k v^{k+1}+\sum_{i=2}^{k-1}(i-1)x_i v^i,
-(k+1)v^k-\sum_{i=2}^{k-1}ix_i v^{i-1},
 \mb x_2\Big),
\end{equation}
where $\mb x_2:=(x_2,\ldots,x_{k-1})$.
\item
We fix an integer $k$ satisfying $k\ge 4$. 
The standard map $h_{D_k}:\R^{k-1}\to \R^{k}$ 
of a {\it $D_k$-singular point}
is defined by
$$
h_{D_k}(u,v,\mb x_3):=
\Big(
h_0(u,v,\mb x_3),
h_1(u,v,\mb x_3),
h_2(u,v,\mb x_3),\mb x_3
\Big)
$$
and
\begin{align}\label{eq:374a}
h_0(u,v,\mb x_3)&:=
\pm 2u^2 v+(k-2)v^{k-1}+\sum_{i=3}^{k-1}(i-2)x_iv^{i-1}, \\
\label{eq:374b}
h_1(u,v,\mb x_3)&:=\mp 2uv, \\
\label{eq:374c}
h_2(u,v,\mb x_3)&:=
\mp u^2-(k-1)v^{k-2}-\sum_{i=3}^{k-1}(i-1)x_iv^{i-2},
\end{align}
where $\mb x_3:=(x_3,\ldots,x_{k-1})$.
The map $h:=h_{D_k}$ has a
$\pm$-ambiguity $h=h_+$ or $h=h_-$
(see Remark \ref{rmk:433}).

\item
The standard map $h_{E_6}:\R^5\to \R^{6}$ 
of an {\it $E_6$-singular point}
is defined by 
\begin{align*}
h_{E_6}(u,v, x_3,x_4,x_5)&:=
\Big(
2u^3+3v^4+v^2x_3+uvx_4+2uv^2x_5, \\
&\phantom{aaaaaaaaa}
 -3u^2-vx_4-v^2x_5, \\
&\phantom{aaaaaaaaaaaaaa}
-4 v^3-2vx_3-ux_4-2uv x_5,
 x_3, x_4,x_5\Big).
\end{align*}
Similarly, 
the standard maps $h_{E_7}:\R^6\to \R^{7}$,
and $h_{E_8}:\R^7\to \R^{8}$ 
of $E_7$ and $E_8$ singular points
are defined in Sections~6 and~7.
\end{enumerate}

\section{Our strategy to find main-analytic functions} 

\subsection*{Basic materials on resultants}

The key to our proof of Theorem~\ref{thm:M}
lies in the use of \lq\lq resultants'' of two polynomials (cf. Appendix~A). 

In \textit{Mathematica}, 
the built-in function
$$
 \op{Subresultants}\Big[\op{poly1},\op{poly2},\op{var}\Big]
$$
generates the list of the (principal) subresultant coefficients 
$$
\op{Res}_{\op{var}}^{(i)}(\op{poly1},\op{poly2}) 
\qquad (i=0,1,2,3,\ldots)
$$ 
of  the polynomials poly1 and poly2 with respect to the variable var.
Each (principal) subresultant coefficient
 is the determinant of a submatrix
of the Sylvester matrix.
In particular, when $i=0$,
$$
\op{Res}_{\op{var}}(\op{poly1},\op{poly2}):= 
\op{Res}_{\op{var}}^{(0)}(\op{poly1},\op{poly2}) 
$$
is the usual resultant between $\op{poly1}$ and $\op{poly2}$.
For two polynomials $a(v)$ and $b(v)$ whose 
leading coefficients 
are non-vanishing,  
 $a(v)$ 
and $b(v)$ have $k$ common roots in $\C$
if and only if
\begin{equation}
\op{Res}_{v}^{(0)}(a,b)=\ldots=\op{Res}_{v}^{(k-1)}(a,b)=0.
\end{equation}
This criterion will be useful in the
following discussion.
The definitions of the resultant $\op{Res}_{v}^{(0)}(a,b)$
(i.e. the $0$-th (principal) subresultant coefficient) 
and the first (principal) subresultant coefficient denoted by
\begin{equation}\label{eq:550}
\op{Psc}_{v}(a,b)
:=\op{Res}_{v}^{(1)}(a,b)
\end{equation}
are given in  Appendix A.
Precise properties of subresultants
are in Vega \cite{V}.

\medskip
Let
$
h_S:(\R^{k},o) \to (\R^{l},\mb 0)
$
be the standard map of a singular point $o$.
We now describe our strategy to find the main-analytic
function of the image of $h_S$:
We will find real polynomials $A^{\mb x}_i(v)$ ($i=1,\ldots,j$)
in $v$ with parameter $\mb x\in \R^k$
so that
$$
h_S(\R^k)=\{\mb x\in \R^k\,;\, \text{$\exists$ $v\in \R$
such that
$A^{\mb x}_i(v)=0$ for $i=1,\ldots,j$}
\},
$$
where $j$ is a positive integer.
We say that such a  family of polynomials
$\left(A^{\mb x}_i(v)\right)_{i=1}^{j}$ 
is the {\it characteristic family of polynomials}.
When $j=2$, we set $A^{\mb x}:=A^{\mb x}_1$ and $B^{\mb x}:=A^{\mb x}_2$, and
call $(A^{\mb x},B^{\mb x})$ the {\it characteristic pair of polynomials}
associated with the map $h$.
If $A^{\mb x}(v)=B^{\mb x}(v)=0$ for some non-real $v\in \C\setminus \R$,
then the first (principal) subresultant coefficient
$\op{Psc}_v(A^{\mb x},B^{\mb x})$ vanishes at $\mb x$
as well as $\op{Res}_v(A^{\mb x},B^{\mb x})$,
since the conjugate $\bar v$ of $v$ is another common root.
So, if 
\begin{align}\label{eq:Exp}
&\dim_H\{\mb x\in \R^k\,;\,
\op{Res}_v(A^{\mb x},B^{\mb x})=
\op{Psc}_v(A^{\mb x},B^{\mb x})=0\}\\
&\phantom{aaaaaa}\nonumber
<\dim_H\{\mb x\in \R^k\,;\,
\op{Res}_v(A^{\mb x},B^{\mb x})=0\}
\end{align}
holds, we can conclude that
\begin{equation}\label{eq:theta}
\mc R(\mb x):=\op{Res}_v(A^{\mb x},B^{\mb x})
\end{equation}
is a main-analytic function of $h_S(\R^k)$.
To check the inequality \eqref{eq:Exp},
we prepare useful methods 
in Appendix B.
However, in general,
$\mc R(\mb x)$ can be factorized into the form 
$\mc R(\mb x)=q(\mb x)\Theta(\mb x)$,
and $\Theta(\mb x)$ may be the desired main-analytic function.
This actually occurs in the case of the singularity of type $E$. 

We give here an elementary example.

\begin{Exa}\label{ex:623}
The real analytic map 
$
h:\R^2\ni (u,v)\mapsto (u,uv,v^2)\in \R^3
$
gives the standard map of the {\it cross cap}.
If we set $h(u,v)=(x,y,z)$,
then we have
$$
u=x,\quad uv=y,\quad v^2=z,
$$
which induce the two polynomials
$A(v):=xv-y$ and $B(v):=v^2-z$
giving the characteristic pair of polynomials, and
$\Theta(x,y,z):=y^2-zx^2$
coincides with the resultant
$
\op{Res}_v(A,B).
$ 
Since 
$
\mc S:=\op{Psc}_v(A,B)=x
$ (cf. \eqref{eq:550}),
$\mc Z(\Theta)\setminus h(\R^2)$ 
lies in the
zero set 
$$
\{x=y=0\}=\mc Z(\Theta)\cap 
\mc Z(\mc S),
$$
the resultant $\Theta$ 
gives a main-anlytic function of $h(\R^2)$.
\end{Exa}

\subsection*{The proof of Theorem~\ref{thm:M2}
}

We now fix integers $n,m$ satisfying $n>m\ge 2$. 
The standard map $h_{r,m}:\R^m\to \R^n$ 
of an $r$-Morin singular point
is defined when $m\ge r(n-m+1)$ (see \eqref{eq:457}).
If $m\le 5$, then we have
$$
r\le \frac{m}{n-m+1}\le \frac{m}{2}\le 2.
$$
Such a possibility for $r$ is $1$ or $2$.

\subsubsection*{\bf The case of $r=1$}

When $r=1$, each Morin map 
is obtained 
as the 
suspension (cf. Definition \ref{Def:S})
of the
standard $m$-dimensional cross cap 
$h_{1,m}:\R^m\to \R^{2m-1}$ ($m\ge 2$)
defined by
\begin{equation}
h_{1,m}(u_1,\ldots,u_m):=\left(u_1,\ldots,u_{m-1},u_1u_m,\ldots,u_{m-1}u_m,u_m^2\right).
\end{equation}
If $m=2$, then $h_{1,2}:\R^2\to \R^3$ is the standard map of the cross cap given
in Example \ref{ex:623}.
It can be easily checked 
that $h_{1,m}$ is a proper map and also an $m$-dimensional
real analytic map having exactly one isolated 
singular point at the origin in $\R^m$
satisfying $h_{1,m}^{-1}(\mb 0)=\{o\}$.
The image of $h_{1,m}$ is contained in the
following subset of $\R^{2m-1}$:
$$
\mc W_{1,m}:=\Big\{(x_1,\ldots,x_{2m-1})\in \R^{2m-1}\,;\, 
x_{m-1+i}^2=x_i^2 x_{2m-1}\,\,\, (i=1,\ldots,m-1)\Big\}.
$$
More precisely, it holds that
\begin{align}\label{eq:I-Cross-cap}
&\mc W_{1,m}=h_{1,m}(\R^m)\cup \mc L \\ \nonumber
&\phantom{aaaa}\mc L:=\Big\{(x_1,\ldots,x_{2m-1})
\in \R^{2m-1}\,;\, 
x_1
=\cdots=x_{2m-2}=0,\,\,
 x_{2m-1}<0\Big\}.
\end{align}
We remark that
$
\mc L
$
coincides with the image of the set of self-intersections of $h_{1,m}$.
This indicates that
$h_{1,m}(\R^m)$ is 
a global main-analytic subset of $\R^{2m-1}$, whose
main-analytic function is
$$
\Theta_{1,m}(x_1,\ldots,x_{2m-1}):=
\sum_{i=1}^{m-1}
\Big(x_{m-1+i}^2-x_i^2 x_{2m-1}\Big)^2.
$$

\subsubsection*{\bf The case of $r=2$}
Since $m\le 5$, 
the possibilities of Morin maps 
are
the standard map $h_{2,4}:\R^4\to \R^5$
defined by
\begin{align}\label{eq:h24}
&h_{2,4}(x_1,x_2,x_3,x_4)
=(x_1,x_2,x_3, x_1x_4+x_2x_4^2,x_3x_4+x_4^3)
\end{align}
and its suspension. So it is sufficient to consider the
main-analyticity of the map $h_{2,4}$.
It can be easily checked that 
$h_{2,4}$ 
is a $4$-dimensional real
analytic map.
We write $(h^1,\ldots,h^5):=h_{2,4}$
and set
\begin{align*}
a(y_1,\ldots,y_5,t)&:=-y_4+h^4(y_1,y_2,y_3,t)=
-y_4+y_1t+y_2t^2, \\
b(y_1,\ldots,y_5,t)&:=-y_5+h^5(y_1,y_2,y_3,t)=
-y_5+y_3t+t^3.
\end{align*}
Then 
the image of $h_{2,4}$ satisfies
\begin{equation}
h_{2,4}(\R^4)
=\{\mb y\in \mb \R^5\,;\, 
\text{there exists $t\in \R$ such that\,\,}
a(t,\mb y)=b(t,\mb y)=0\},
\end{equation}
that is, $(a,b)$ gives 
the characteristic pair of polynomials
associated with $h_{2,4}(\R^4)$.
Since the leading term of $b(t)$ is $t^3$,
we can apply the computation of resultant
as in Appendix A.
The resultant $\Theta_{2,4}(\mb y):=\op{Res}_t(a,b)$ of the
two polynomials $a,b$ in $t$ can be computed as
\begin{align*}
&\Theta_{2,4}(\mb y):=
-y_1^2 y_3 y_4 - y_2^2 y_3^2 y_4 - 2 y_2 y_3 y_4^2 - y_4^3 + y_1^3 y_5 \\
&\phantom{aaaaaaaaaaaaaaaaaaaaaa}
+ 
 y_1 y_2^2 y_3 y_5 + 3 y_1 y_2 y_4 y_5 + y_2^3 y_5^2.
\end{align*}

The proof of
Theorem~\ref{thm:M2} is accomplished by the following statement.

\begin{Prop}\label{thm:r2a}
The standard map
$(h:=)h_{2,4}:\R^4\to \R^5$ is a proper $4$-dimensional real analytic map whose
singular set is the two dimensional real analytic set
$$
\Sigma:=\{\mb x\in \R^4\,;\, x_1+2 x_2 x_4=x_3+3 x_4^2=0\}.
$$
Moreover, $h^{-1}(\mb y)$ is finite for each $\mb y\in \R^5$,
and $h(\R^4)$ is 
a global main-analytic set of $\R^{2m-1}$
with a main-analytic function $\Theta_{2,4}$.
\end{Prop}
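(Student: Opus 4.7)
The plan is to verify the four assertions in turn: properness, identification of the singular locus $\Sigma$ and its dimension, finiteness of fibers, and main-analyticity via the resultant strategy of this section.

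Properness of $h=h_{2,4}$ follows directly from the explicit form \eqref{eq:h24}: the first three components equal $x_1,x_2,x_3$, so boundedness of $h(\mb x)$ forces these to be bounded, and the fifth component $x_4^3+x_3 x_4$ then forces $|x_4|$ bounded by its cubic growth. For the Jacobian, the first three rows are the standard basis vectors in the $(x_1,x_2,x_3)$-directions, while rows four and five contribute only in the $x_4$-column, with entries $x_1+2 x_2 x_4$ and $x_3+3 x_4^2$. The rank therefore drops below $4$ precisely when both entries vanish, which defines $\Sigma$; parameterizing $\Sigma$ by $(x_2,x_4)$ via $x_1=-2 x_2 x_4$ and $x_3=-3 x_4^2$ exhibits it as a $2$-dimensional real-analytic submanifold, and in particular $h$ is a $4$-dimensional real analytic map in the sense of the preliminaries. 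For the finiteness of fibers, note that for any $\mb y\in\R^5$ the first three coordinates of a preimage are forced to equal $(y_1,y_2,y_3)$, and $x_4$ must then be a real root of the cubic $x_4^3+y_3 x_4-y_5=0$, so $h^{-1}(\mb y)$ has at most three points.

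For main-analyticity, the pair $(a,b)$ is by construction a characteristic pair of polynomials for $h$: if $\mb y=h(\mb x)$ then $t=x_4$ is a common root of $a(\mb y,\cdot)$ and $b(\mb y,\cdot)$, so $\Theta_{2,4}(\mb y)=\op{Res}_t(a,b)=0$, which yields $h(\R^4)\subset\mc Z(\Theta_{2,4})$. Conversely, if $\mb y\in\mc Z(\Theta_{2,4})\setminus h(\R^4)$, then the common roots of $a(\mb y,\cdot)$ and $b(\mb y,\cdot)$ lie in $\C\setminus\R$; since the coefficients are real, the complex conjugate of any such root is again a common root, so there are at least two common complex roots and hence $\op{Psc}_t(a,b)(\mb y)=0$ as well. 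Consequently,
\[
\mc Z(\Theta_{2,4})\setminus h(\R^4)\subset \{\mb y\in\R^5 : \op{Res}_t(a,b)(\mb y)=\op{Psc}_t(a,b)(\mb y)=0\},
\]
and the main-analyticity assertion reduces to verifying the dimension inequality \eqref{eq:Exp}.

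The main obstacle, and really the only nontrivial step, is this dimension estimate. I plan to apply the general criterion of Appendix~B to the two polynomial equations $\op{Res}_t(a,b)=0$ and $\op{Psc}_t(a,b)=0$ in the five real variables $\mb y$ to conclude that their common zero set has Hausdorff dimension at most $3$. On the open stratum $y_2\ne 0$ the polynomial $a(\mb y,\cdot)$ has degree $2$ in $t$, and the presence of two common roots with the cubic $b(\mb y,\cdot)$ is equivalent to $a(\mb y,\cdot)$ dividing $b(\mb y,\cdot)$, which is a codimension-two polynomial condition on $\mb y$; on the residual stratum $\{y_2=0\}$, which is itself only $4$-dimensional in $\R^5$, one reads off from the explicit formula for $\Theta_{2,4}$ that already $\op{Res}_t(a,b)=0$ cuts down to a $3$-dimensional subset. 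Combining these strata yields $\dim_H\{\op{Res}_t=\op{Psc}_t=0\}\le 3<4=\dim_H h(\R^4)$ by Proposition~\ref{prop:HDIM}, which establishes that $\Theta_{2,4}$ is a main-analytic function of $h(\R^4)$ and completes the proof.
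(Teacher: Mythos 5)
Your proof is correct and follows essentially the same route as the paper's: properness from the boundedness forced by the cubic in $x_4$, fibers of size at most three from that same cubic, and main-analyticity by showing $\mc Z(\Theta_{2,4})\setminus h(\R^4)$ lies in $\{\op{Res}_t(a,b)=\op{Psc}_t(a,b)=0\}$ and bounding that set's dimension by $3$ via the same $y_2=0$ versus $y_2\ne 0$ stratification (the paper's explicit elimination of $y_3$ and the resulting identity $\Theta_{2,4}=(y_1y_4+y_5y_2^2)^2/y_2$ on $\{\mc S=0,\ y_2\ne0\}$ is exactly your divisibility condition written out). One small slip: rows four and five of the Jacobian do have nonzero entries outside the $x_4$-column (namely $x_4$, $x_4^2$ and $x_4$), but after clearing these with the first three rows the rank criterion reduces, as you state, to the simultaneous vanishing of $x_1+2x_2x_4$ and $x_3+3x_4^2$, so your identification of $\Sigma$ --- a point the paper's proof in fact leaves unaddressed --- stands.
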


\begin{proof}
We first prove the properness of $h$:
Let $\mc K$ be a compact subset of $\R^5$.
Consider a sequence $\{p_i\}_{i=1}^\infty$ of points  in $h^{-1}(\mc K)$.
It is sufficient to prove that $\{p_i\}_{i=1}^\infty$
has a convergent subsequence.
For each positive index $i$, we can write
$$
p_i=(\mb y^{(i)},t^{(i)}),\qquad 
h(p_i)=\Big(\mb y^{(i)} ,t^{(i)},h^4(\mb y^{(i)},t^{(i)}),
h^5(\mb y^{(i)},t^{(i)})\Big) \in \mc K,
$$
where $t^{(i)}\in \R$ and $\mb y^{(i)}=(y^{(i)}_1,y^{(i)}_2,y^{(i)}_3)\in \R^3$.
Since $\mc K$ is bounded,
there exists a constant $C$
not depending on $i$ such that the five components
of $h_{2,4}$ in \eqref{eq:h24} with $t:=x_4$
can be estimated by
$$
|y^{(i)}_1|,\,\,|y^{(i)}_2|,\,\, |y^{(i)}_3|,\,\,
|y^{(i)}_1t^{(i)}+y^{(i)}_2(t^{(i)})^2|,\,\,
|y^{(i)}_3t^{(i)}+(t^{(i)})^3|<C.
$$
We set $z_0:=y^{(i)}_3t^{(i)}+(t^{(i)})^3$.
Then any solution $t=\alpha$ of 
$
t^3+y^{(i)}_3t-z_0=0
$
satisfies
$
|\alpha|< 1+C
$ 
because of  Cauchy's upper bounds for roots
of the polynomials (see the remark below).
In particular, we have $|t^{(i)}|<1+C$, and
can conclude that $\{p_i\}_{i=1}^\infty$ is 
bounded in $\R^{4}$, proving the assertion.

The finiteness of
the set $h^{-1}(\mb y)$ 
can be proved easily.
Let $\mb y$ be a point in the set $\mc Z(\Theta_{2,4})$.
We examine the possibility of $t$
satisfying $h(y_1,y_2,y_3,t)=\mb y$.
We set $\mc S:=\op{Psc}_v(a,b)$.
If $t\in \C\setminus \R$,
the conjugate $\overline{t}$ is also a
common root of $a,b$. So
$\mb y$ must lie in the set
$
\mc L=\{\mb y\in \R^5\,;\, \Theta_{2,4}(\mb y)=\mc S(\mb y)=0\}.
$
We obtain
$$
\mc S(y_1,y_2,y_3,y_4,y_5)=y_4 y_2+y_1^2+y_2^2 y_3.
$$
If $y_2=0$, then $\mc S=0$ implies $y_1=0$.
Then $\Theta_{2,4}=0$ reduces to
$y_4=0$.
So we consider the case that $y_2\ne 0$.
Then $\mc S=0$ implies
$y_3=-(y_1^2 +y_4 y_2)/y_2^2$
and
$$
0=\Theta_{2,4}=\frac{(y_4y_1+y_5 y_2^2)^2}{y_2}.
$$
So we can conclude that $\dim_H(\mc L)=3$.\end{proof}

\begin{Rmk}[Cauchy's bound for complex roots]\label{rmk:Cauchy}
For a given polynomial
$t^n+a_{n-1}t^{n-1}+\cdots+a_1t+a_0$ in $t$
for complex coefficients,
any root $t=\alpha(\in \C)$ of the polynomial
satisfies $|\alpha|<1+\max_{i=0,\ldots,n-1}|a_i|$.
\end{Rmk}

\section{Singularities of type A}

As  defined in the previous section,
the standard map
$
h:=h_{A_k}:\R^{k-1}\to \R^{k}
$
for an $A_k$-singular point ($k\ge 2$)
is given by \eqref{eq:pA408}. 

\begin{Prop}\label{prop:A182}
The image of the map $h$
coincides with
the set
\begin{align*}
\mc W_{A_k} &:=\Big\{\mb x:=(x_0,\ldots,x_{k-1})\in \R^k\,;\, \\
& \phantom{aaaaaaaaa}
\text{the polynomial $A^{\mb x}(v)$ in $v$ has 
a real double root}\Big\},
\end{align*}
where
\begin{equation}\label{eq:F191}
A^{\mb x}(v):=v^{k+1}+x_{k-1}v^{k-1}+\cdots+x_1 v+x_0.
\end{equation}
\end{Prop}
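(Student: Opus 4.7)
The plan is to prove the equality $h(\R^{k-1}) = \mc W_{A_k}$ by direct algebraic verification, exploiting the fact that the first two components of $h_{A_k}$ are precisely the coefficients $x_0$ and $x_1$ forced by demanding that $A^{\mb x}$ have a double root at $v$.

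First, I would compute the derivative
\[
   \frac{dA^{\mb x}}{dv}(v) = (k+1)v^{k} + \sum_{i=2}^{k-1} i\, x_i\, v^{i-1} + x_1,
\]
and record that, by definition, $\mb x\in \mc W_{A_k}$ is equivalent to the existence of some $v\in \R$ such that the linear system
\[
   A^{\mb x}(v) = 0, \qquad \frac{dA^{\mb x}}{dv}(v) = 0
\]
is satisfied. Viewing this system with the variables $x_2,\dots,x_{k-1}$ and $v$ as parameters, it is triangular in $(x_0,x_1)$ and hence uniquely solvable: the derivative equation solves for $x_1$ in terms of $(v,x_2,\dots,x_{k-1})$, and then $A^{\mb x}(v)=0$ solves for $x_0$.

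Next I would carry out this solution explicitly. Solving the derivative equation gives
\[
   x_1 = -(k+1)v^k - \sum_{i=2}^{k-1} i\, x_i\, v^{i-1},
\]
which is exactly the second component of $h_{A_k}(v,\mb x_2)$. Substituting this expression for $x_1$ back into $A^{\mb x}(v)=0$, a telescoping simplification
\[
   x_0 = -v^{k+1} - \sum_{i=2}^{k-1} x_i v^i - x_1 v
       = k v^{k+1} + \sum_{i=2}^{k-1}(i-1)\, x_i\, v^i
\]
recovers exactly the first component of $h_{A_k}(v,\mb x_2)$. Combined with the identity on the remaining coordinates $x_i = x_i$ for $i\ge 2$, this shows that the unique point $\mb x$ satisfying the double-root condition at $v$ with prescribed $(x_2,\dots,x_{k-1})$ equals $h_{A_k}(v,x_2,\dots,x_{k-1})$.

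Both inclusions then fall out: given $(v,\mb x_2)$, the point $h_{A_k}(v,\mb x_2)$ lies in $\mc W_{A_k}$ by construction, since by the computation above $v$ is a double root of the associated polynomial; conversely, given $\mb x\in \mc W_{A_k}$ with witness $v\in \R$, the solvability argument forces $\mb x = h_{A_k}(v,x_2,\dots,x_{k-1})$. There is essentially no obstacle here: the entire proof is the observation that the definition of $h_{A_k}$ was chosen precisely to encode the double-root locus of $A^{\mb x}$, so the argument reduces to a transparent direct computation.
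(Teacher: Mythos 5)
Your proposal is correct and follows essentially the same route as the paper: both solve the triangular system $A^{\mb x}(v)=\tfrac{dA^{\mb x}}{dv}(v)=0$ for $x_1$ and then $x_0$, and identify the resulting expressions with the first two components of $h_{A_k}$. The computations match the paper's $X_1$ and $X_0$ exactly, so no further comment is needed.
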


\begin{proof}
We set 
\begin{equation}\label{eq:881}
B^{\mb x}(v) := \frac{dA^{\mb x}(v)}{dv}.
\end{equation}
We remark that $\mb x \in \mc W_{A_k}$  
if and only if there exists $v \in \R$ such that  
$A^{\mb x}(v) = B^{\mb x}(v) = 0$.  
So we assume the existence of such a $v \in \R$.
Since
$
B^{\mb x}(v)=(k+1)v^{k}+\sum_{i=1}^{k-1}i x_i v^{i-1},
$
we set
\begin{equation}\label{eq:481}
X_1(v,\mb x_2):=-(k+1)v^k-\sum_{i=2}^{k-1} ix_i v^{i-1},
\end{equation}
where $\mb x_2:=(x_2,\ldots,x_{k-1})$.
Substituting this into \eqref{eq:F191},
$A^{\mb x}(v)=0$ reduces to
\begin{align*}
x_0&=
-v^{k+1}+v\Big((k+1)v^k+\sum_{i=2}^{k-1} ix _i v^{i-1}\Big)
 -\sum_{i=2}^{k-1}x_i v^i \\
&=k v^{k+1}+\sum_{i=2}^{k-1} (i-1) x_i v^i.
\end{align*}
So, if we set
\begin{equation}\label{eq:481b}
X_0(v,\mb x_2):=k v^{k+1}+\sum_{i=2}^{k-1} (i-1) x_i v^i,
\end{equation}
then we have $h(v,\mb x_2)=(X_0(v,\mb x_2),X_1(v,\mb x_2),\mb x_2)$,
which proves the assertion. 
\end{proof}

\begin{Prop}\label{prop:1a}
The map $h:\R^{k-1}\to \R^k$ and its
complexification $h^\C:\C^{k-1}\to \C^k$ are proper.
\end{Prop}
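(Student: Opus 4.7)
The plan is to prove properness by showing that the preimage of every compact set is bounded, closed being automatic by continuity.

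Given a compact set $\mc K\subset\R^k$, take a point $(v,\mb x_2)\in h^{-1}(\mc K)$. The image $h(v,\mb x_2)=(X_0,X_1,\mb x_2)$ lies in $\mc K$, so already $\mb x_2=(x_2,\ldots,x_{k-1})$ is bounded by a constant depending only on $\mc K$; this is the first and crucial observation, since the suspension part of the map gives free control on most parameters.

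Next I would exploit the relation $x_1=X_1(v,\mb x_2)=-(k+1)v^k-\sum_{i=2}^{k-1}ix_iv^{i-1}$. Dividing by $-(k+1)$ yields a monic polynomial equation in $v$,
\[
  v^{k}+\sum_{i=2}^{k-1}\frac{i x_i}{k+1}\,v^{i-1}+\frac{x_1}{k+1}=0,
\]
whose coefficients are bounded because $x_1$ and $\mb x_2$ are. Cauchy's bound (Remark \ref{rmk:Cauchy}) then gives a uniform bound $|v|<1+M$ for some $M$ depending only on $\mc K$. Combined with the boundedness of $\mb x_2$, this shows $h^{-1}(\mc K)$ is bounded; since it is also closed in $\R^{k-1}$, it is compact, proving properness of $h$.

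For the complexification $h^\C:\C^{k-1}\to\C^k$, I would run the identical argument. Given a compact $\mc K\subset\C^k$, the coordinates $\mb x_2\in\C^{k-2}$ and $x_1\in\C$ are bounded on $\mc K$, and Remark \ref{rmk:Cauchy} already states Cauchy's bound for complex roots of monic polynomials with complex coefficients, so $|v|$ is again bounded uniformly on the preimage. Thus $(h^\C)^{-1}(\mc K)$ is bounded and closed, hence compact.

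No substantial obstacle is expected: the key point is simply that $(k-2)$ of the target coordinates record $\mb x_2$ directly, while the coordinate $x_1=X_1(v,\mb x_2)$ depends in a monic-polynomial manner on $v$ with bounded coefficients, so the degree-$k$ leading term $-(k+1)v^k$ dominates and forces boundedness of $v$ via Cauchy's estimate.
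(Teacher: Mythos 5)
Your proposal is correct and follows essentially the same route as the paper: the paper's (much terser) proof simply observes that the leading coefficients of $X_0$ and $X_1$ as polynomials in $v$ are nonzero constants, which is exactly the point you make explicit by normalizing $x_1=X_1(v,\mb x_2)$ to a monic degree-$k$ equation with bounded coefficients and invoking Cauchy's bound. Your version just fills in the details the paper leaves implicit, and the complex case goes through identically, as you note.
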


\begin{proof}
By Proposition~\ref{prop:A182}, we have
$
h(v,\mb x_2)=\bigl(X_0(v,\mb x_2),\,X_1(v,\mb x_2),\,\mb x_2\bigr)
$.
Since the leading coefficients of $X_0$ and $X_1$
as polynomials in  $v$
are nonzero constant, $h$ is proper.
The same argument applies to $h^{\C}$.
\end{proof}

For the unified treatment, set $\K=\R$ or $\C$, and define
\begin{equation}\label{eq:hatH}
\hat h :=
\begin{cases}
h & \text{if } \K=\R,\\
h^\C & \text{if } \K=\C.
\end{cases}
\end{equation}

\begin{Prop}\label{prop:1b}
For each $\mb x\in \K^k$,
the inverse image $\hat h^{-1}(\mb x)$ 
consists of at most $k$ points.
Moreover, $\hat h^{-1}(\mb 0)=\{o\}$ holds,
where $o$ and $\mb 0$ are
the origins of $\K^{k-1}$ and $\K^{k}$, respectively.
\end{Prop}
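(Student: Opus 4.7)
\medskip
The plan is to identify $\hat h^{-1}(\mb x)$ with the set of common zeros in $\K$ of the pair $(A^{\mb x},B^{\mb x})$ from \eqref{eq:F191} and \eqref{eq:881}, and then bound that set by the degree of $B^{\mb x}$.

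First I would observe that, for $\mb x=(x_0,x_1,x_2,\ldots,x_{k-1})\in\K^k$, a point $(v,\mb y_2)\in\K^{k-1}$ satisfies $\hat h(v,\mb y_2)=\mb x$ if and only if $\mb y_2=\mb x_2:=(x_2,\ldots,x_{k-1})$, since $\hat h$ preserves the last $k-2$ coordinates, together with $X_0(v,\mb x_2)=x_0$ and $X_1(v,\mb x_2)=x_1$. The algebraic reduction already performed in the proof of Proposition~\ref{prop:A182} is a purely polynomial manipulation and therefore applies verbatim to either $\K=\R$ or $\K=\C$: the equation $X_1(v,\mb x_2)=x_1$ is equivalent to $B^{\mb x}(v)=0$, and under this condition the equation $X_0(v,\mb x_2)=x_0$ is equivalent to $A^{\mb x}(v)=0$. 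Hence
\begin{equation*}
\hat h^{-1}(\mb x)\;=\;\bigl\{(v,\mb x_2)\in\K^{k-1}\,;\,A^{\mb x}(v)=B^{\mb x}(v)=0\bigr\}.
\end{equation*}

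Next I would note that $B^{\mb x}(v)=(k+1)v^{k}+\sum_{i=1}^{k-1}i x_i v^{i-1}$ has leading coefficient $k+1\neq 0$, so it is a polynomial in $v$ of degree exactly $k$ and therefore has at most $k$ roots in $\K$. Any common zero of $A^{\mb x}$ and $B^{\mb x}$ must in particular be a root of $B^{\mb x}$, so $\hat h^{-1}(\mb x)$ has cardinality at most $k$, which yields the first assertion. Finally, specializing to $\mb x=\mb 0$ forces $\mb x_2=\mb 0$ and reduces the polynomials to $A^{\mb 0}(v)=v^{k+1}$ and $B^{\mb 0}(v)=(k+1)v^{k}$, whose only common root in $\K$ is $v=0$. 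This gives $\hat h^{-1}(\mb 0)=\{o\}$.

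There is essentially no obstacle in this argument: all the algebraic content is already encoded in the proof of Proposition~\ref{prop:A182}, and the remainder is a degree count for $B^{\mb x}$. The only point worth checking is that the derivation of Proposition~\ref{prop:A182}, originally phrased for real $v$, remains valid over $\C$; but this is automatic since every step is a polynomial identity in $v$ with coefficients in $\K[\mb x]$.
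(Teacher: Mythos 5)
Your proof is correct and follows essentially the same route as the paper: the paper bounds the fiber by noting that the equation $X_1(v,\mb x_2)=x_1$ from \eqref{eq:481} is a degree-$k$ polynomial equation in $v$ (which is exactly your $B^{\mb x}(v)=0$ up to sign), so there are at most $k$ candidates for $v$, with $x_0$ then determined. Your additional care about the argument being valid over $\C$ as well as $\R$ is a welcome explicit check of a point the paper leaves implicit.
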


\begin{proof}
By \eqref{eq:481}, the number of candidates for a real number $v$
satisfying $\hat h(v,\mb x_2)=(x_0,x_1,\mb x_2)$
is at most $k$.
For each such $v$, the corresponding $x_0$ is uniquely 
determined by \eqref{eq:481b}. 
This proves the first assertion.
The second assertion $\hat h^{-1}(\mb 0)=\{o\}$ is obvious.
\end{proof}

\begin{Prop}\label{prop:1c}
The map $h$ 
is a $(k-1)$-dimensional real analytic map.
\end{Prop}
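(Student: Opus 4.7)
The plan is to compute the Jacobian of $h$ directly from its explicit form $h(v,\mb x_2)=(X_0(v,\mb x_2),X_1(v,\mb x_2),\mb x_2)$ given by Proposition~\ref{prop:A182}. Because the last $k-2$ output coordinates simply reproduce $(x_2,\ldots,x_{k-1})$, the $k\times(k-1)$ matrix $dh$ has a convenient block shape: an $(k-2)\times(k-2)$ identity in the lower right, zeros in the last $k-2$ entries of the first column, and the pair $(\partial_v X_0,\partial_v X_1)$ occupying its top two entries. A short column-independence argument then reduces the full-rank condition $\mathrm{rank}\,dh=k-1$ to the requirement $(\partial_v X_0,\partial_v X_1)\neq (0,0)$.

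The key step is the identity
\[
  \partial_v X_0 + v\,\partial_v X_1 = 0.
\]
I would derive this conceptually by differentiating the defining relation $A^{\mb x}(v)\equiv 0$ with respect to $v$ along the image (where $x_0=X_0(v,\mb x_2)$ and $x_1=X_1(v,\mb x_2)$), using $\partial_{x_0}A^{\mb x}=1$, $\partial_{x_1}A^{\mb x}=v$, together with $\partial_v A^{\mb x}=B^{\mb x}(v)=0$ because $v$ is a double root of $A^{\mb x}$ on the image; it can also be checked directly from \eqref{eq:481} and \eqref{eq:481b}. This identity forces $\partial_v X_0=0$ whenever $\partial_v X_1=0$, so the rank condition further simplifies to $\partial_v X_1\neq 0$.

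To conclude, it suffices to note that
\[
  \partial_v X_1(v,\mb x_2)
  =-k(k+1)\,v^{k-1}-\sum_{i=2}^{k-1}i(i-1)\,x_i\,v^{i-2}
\]
is a nonzero polynomial in $(v,\mb x_2)$, since its coefficient of $v^{k-1}$ equals $-k(k+1)\neq 0$. Its zero set is therefore a proper real-analytic subset of $\R^{k-1}$, and the complementary open set, on which $\mathrm{rank}\,dh=k-1$, is dense. I do not expect any genuine obstacle; the only minor cleverness is spotting the relation $\partial_v X_0+v\,\partial_v X_1=0$, after which everything reduces to a routine rank-of-Jacobian check made transparent by the suspension-like structure of $h$.
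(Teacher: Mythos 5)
Your proof is correct and takes essentially the same route as the paper: both exploit the block structure of the Jacobian (identity block in the $\mb x_2$-directions) to reduce the claim to the nonvanishing of the pair $\bigl((X_0)_v,(X_1)_v\bigr)$, which the paper exhibits at the slice $x_2=\cdots=x_{k-1}=0$ as $k(k+1)v^{k-1}(v,-1)$ — the same relation $\partial_v X_0=-v\,\partial_v X_1$ that you derive in general. Your version is slightly more careful about why the full-rank locus is dense (complement of the zero set of the nonzero polynomial $\partial_v X_1$), but the substance is identical.
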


\begin{proof}
We write
$h=(h_0,\ldots, h_{k-1})$.
If we remove the first two columns and
rows, the Jacobian matrix $J$ of $h$
gives the identity matrix of rank $k-2$. 
So, if the $1\times 2$-submatrix 
$
M:=\Big((h_0)_v ,(h_1)_v\Big)
$
of $J$ does not vanish at a certain point,
we obtain the conclusion: 
Since $h_0=x_0$ and $h_1=x_1$, 
by setting $(x_2,\ldots,x_{k-1})=(0,\ldots,0)$, 
$
M=k(k+1)v^{k-1}(v,-1)
$
is obtained, which does not vanish if $v\ne 0$.
\end{proof}

\begin{Theorem}\label{thm:A622}
The image $\mc W_{A_k}\,(=h(\R^{k-1}))$ $(k\ge 2)$
is a global main-analytic set of $\R^{k}$
whose main-analytic function is 
$\Theta_{A_k}:=\op{Res}_v(A^{\mb x},B^{\mb x})$
$($cf. \eqref{eq:881}$)$.
\end{Theorem}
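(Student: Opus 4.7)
The plan is to verify directly the three defining conditions of a main-analytic function for $\Theta_{A_k}$, following the general recipe of Section~2 applied to the characteristic pair $(A^{\mb x},B^{\mb x})$ from Proposition~\ref{prop:A182}. By that proposition, $\mb x\in\mc W_{A_k}$ iff $A^{\mb x}$ and $B^{\mb x}$ have a common \emph{real} root, whereas by the defining property of the resultant $\mb x\in\mc Z(\Theta_{A_k})$ iff they have a common root in $\C$; in particular $\mc W_{A_k}\subseteq\mc Z(\Theta_{A_k})$. For the dimensions: Propositions~\ref{prop:1c} and~\ref{prop:HDIM} give $\dim_H\mc W_{A_k}=k-1$, and since $\Theta_{A_k}$ is not identically zero (a generic $A^{\mb x}$ has only simple roots in $\C$), $\mc Z(\Theta_{A_k})$ is a proper real-algebraic subset of $\R^{k}$, so $\dim_H\mc Z(\Theta_{A_k})\le k-1$. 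These two bounds force $\dim_H\mc Z(\Theta_{A_k})=k-1$.

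Next I would bound the extra set $\mc L:=\mc Z(\Theta_{A_k})\setminus\mc W_{A_k}$. At any $\mb x\in\mc L$, every common root of $A^{\mb x},B^{\mb x}$ in $\C$ is non-real. Since both polynomials have real coefficients, such a non-real common root $v$ comes with its conjugate $\bar v\ne v$, which is also a common root. Hence $A^{\mb x}$ and $B^{\mb x}$ share at least two distinct roots in $\C$, so the subresultant criterion recalled before~\eqref{eq:550} forces $\mc S(\mb x):=\op{Psc}_v(A^{\mb x},B^{\mb x})=0$ as well. This yields the inclusion
\[
  \mc L \;\subseteq\; \mc Z(\Theta_{A_k})\cap\mc Z(\mc S).
\]

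The main obstacle is then the dimension estimate~\eqref{eq:Exp}, namely $\dim_H\bigl(\mc Z(\Theta_{A_k})\cap\mc Z(\mc S)\bigr)<k-1$. Points of this intersection parametrize polynomials $A^{\mb x}$ that either have a triple root in $\C$ or two distinct double roots in $\C$; each case imposes a further independent algebraic condition beyond the single discriminant equation $\Theta_{A_k}=0$, so the locus should have real codimension at least two in $\R^{k}$. To make this rigorous I would invoke the tools of Appendix~B. One natural route is to realize each stratum as the image of a real-analytic map from a parameter space of dimension $\le k-2$: for a conjugate pair of double roots $\alpha\pm\op{i}\beta$ ($\beta\ne 0$) together with the $k-3$ non-leading coefficients of a monic complementary factor $P(v)$ of degree $k-3$, subject to the one linear constraint that the coefficient of $v^{k}$ in $(v^{2}-2\alpha v+\alpha^{2}+\beta^{2})^{2}\,P(v)$ vanishes (matching the missing $v^{k}$-term in~\eqref{eq:F191}); the triple-root stratum is handled similarly. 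Lipschitz preservation of Hausdorff dimension under real-analytic maps then gives $\dim_H\mc L\le k-2<k-1$, and combined with the previous step this completes the verification that $\Theta_{A_k}$ is a main-analytic function for $\mc W_{A_k}=h(\R^{k-1})$.
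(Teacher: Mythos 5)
Your proposal is correct and follows essentially the same route as the paper: the identity $\mc W_{A_k}=\mc Z(\Theta_{A_k})\setminus\mc L$, the dimension count $\dim_H\mc Z(\Theta_{A_k})=k-1$ via Propositions~\ref{prop:1c} and~\ref{prop:HDIM}, and the bound $\dim_H\mc L\le k-2$ obtained by writing $A^{\mb x}=(v^2-(\alpha+\bar\alpha)v+|\alpha|^2)^2P(v)$ and counting the free parameters after imposing the vanishing of the $v^{k}$-coefficient. The only difference is your detour through the inclusion $\mc L\subseteq\mc Z(\Theta_{A_k})\cap\mc Z(\mc S)$, which the paper omits here (it bounds $\mc L$ directly) and which obliges you to also check the real triple-root and real double-double strata; these have dimension $\le k-2$ by the same parametrization, so the detour is harmless but unnecessary.
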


\begin{proof}
Using $A^{\mb x}$
given in \eqref{eq:F191},
we have
\begin{equation}\label{eq:579}
h(\R^{k-1})=\mc Z(\Theta_{A_k})\setminus \mc L
\end{equation}
and
\begin{align*}
\mc L&:=\Big\{\mb x\in \mc Z(\Theta_{A_k})\,;\, 
\text{every common root $v\in\C$ of $A^{\mb x}$ and $B^{\mb x}$ is non-real}\Big\}.
\end{align*}
By \eqref{eq:579} with
Proposition~\ref{prop:HDIM},
we have
$$
\dim_H \mc Z(\Theta_{A_k})=\dim_H h(\R^{k-1})=k-1.
$$
So,
to prove that $\Theta_{A_k}$ is a main-analytic function
of the set $h(\R^{k-1})$,
it is sufficient to show that the dimension of the set
 $\mc L$ is less than $k-1$.

If $k=2$, then $h$ is the standard map of a cusp
and $\mc L$ is empty (see Example~\ref{exa:C}). 
In particular, the assertion
is obvious. So we may set $k\ge 3$.
Suppose that 
$(\alpha,x_2,\ldots,x_{k-1})\in \R^k$
satisfies $h(\alpha,x_2,\ldots,x_{k-1})\in \mc L$.
Then $\alpha$ is a non-real double root of $A^{\mb x}(v)$.
Since the coefficients of $A^{\mb x}$ are real numbers, 
the conjugate $\overline{\alpha}$ is also 
a non-real double root of $A^{\mb x}(v)$.
In particular, we can write
\begin{equation}\label{eq:613}
A^{\mb x}(v)=\Big(v^2-(\alpha+\bar \alpha)v+|\alpha|^2\Big)^2 
\left(v^{k-3}+\sum_{i=0}^{k-4} c_iv^i\right).
\end{equation}
Comparing coefficients in \eqref{eq:F191} and \eqref{eq:613},
we have $c_{k-4}=2(\alpha+\bar \alpha)$.
Thus, the expression \eqref{eq:613}
implies that
$\mb x\in \mc L$
depends only on the 
parameters $\alpha\in \C$ and $c_0,\ldots, c_{k-5}$.
So we have 
$$
\dim_H\,\mc L\le \dim_H \C+\dim_H \R^{k-4}=2+(k-4)=k-2,
$$
proving the assertion.
\end{proof}

\begin{Rmk}\label{rmk:a}
From the above discussion, in the complex setting,
one can easily verify that
$
h^\C(\C^{k-1})=\mathcal{Z}_\C(\Theta_{A_k}),
$
where $h^\C$ is the complexification of $h$, and
\[
\mathcal{Z}_\C(\Theta_{A_k})
:=\{\mb x\in \C^k \mid \Theta_{A_k}(\mb x)=0\}.
\]
Moreover, $\Theta_{A_k}$ is well-known 
to be irreducible over~$\C$. 
Therefore, $\Theta_{A_k}$ coincides with the defining polynomial of
$h^\C(\C^{k-1})$. 
In particular,
$\Theta_{A_k}$ coincides with 
the discriminant polynomial of type $A_k$
as in the introduction.
\end{Rmk}

\begin{Exa}\label{exa:C}
When $k=2$, the standard map $h:\R\to \R^2$ is
$
h(v)=(2v^3,-3v^2),
$
which has a cusp singular point at $v=0$.
In this case
\[
A^{\mb x}(v)=v^3+x_{1}v+x_0,\qquad  B^{\mb x}(v)=3v^2+x_1,
\]
and
\[
\Theta_{A_2}(x_0,x_1)
:=\op{Res}_v\!\big(A^{\mb x},B^{\mb x}\big)
=27 x_0^2+4 x_1^3
\]
is a main-analytic function of $h(\R)$.
In particular, $\mathcal{Z}(\Theta_{A_2})\setminus h(\R)=\varnothing$.
\end{Exa}

\begin{Exa}\label{exa:SW}
When $k=3$, the standard map $h:\R^2\to \R^3$ is
\[
h(v,x_2)=(3v^4+x_2v^2,\,-4v^3-2x_2v,\,x_2),
\]
which has a swallowtail singular point at $(v,x_2)=(0,0)$.
Here
\[
A^{\mb x}(v)=v^4+x_{2}v^2+x_{1}v+x_0,\qquad
B^{\mb x}(v)=4v^3+2x_2v+x_1,
\]
and the function
$
\Theta_{A_3}(x_0,x_1,x_2):=\op{Res}_v\!\big(A^{\mb x},B^{\mb x}\big)
$
is computed as
\[
\Theta_{A_3}
= 256\,x_0^3 
-128\,x_2^2\,x_0^2
+\bigl(144 x_1^2 x_2 + 16 x_2^4 \bigr)x_0
-\bigl(27 x_1^4 + 4 x_1^2 x_2^3 \bigr),
\]
which gives a main-analytic function of $h(\R^2)$.
If we set
$
\mathcal L:=\Bigl\{\bigl(\tfrac{t^2}4,\,0,\,t\bigr)\,;\; t>0\Bigr\},
$
then
$
\mc Z(\Theta_{A_3})\setminus h(\R^2)=\mathcal L.
$
\end{Exa}

\section{Singularities of type $D$}
The standard map
$
h:=h_{D_k}:\R^{k-1}\to \R^{k}\,\,(k\ge 4)
$
of $D_k$-singular points is defined by
$
h(u,v,\mb x_3):=
\Big(
h_0(u,v,\mb x_3),
h_1(u,v,\mb x_3),
h_2(u,v,\mb x_3),\mb x_3
\Big),
$
where 
$\mb x_3:=(x_3,\ldots,x_{k-1})$
and
$h_i$ ($i=0,1,2$)
are given in
\eqref{eq:374a}, \eqref{eq:374b} and \eqref{eq:374c}. 
As an analogue of 
Proposition~\ref{prop:A182},
the following assertion holds:

\begin{Prop}\label{prop:D644}
The image of the standard map $h$ coincides with
the set
\begin{align*}
\mc W_{D_k}&:=\Big\{\mb x:=(x_0,\ldots,x_{k-1})\in \R^k\,;\, 
\text{there exists $(u,v)\in \R^2$ such that}
\\
& \phantom{aaaaaaaaaaaaaaaaaaaaaaaa}
\text{$F(u,v,\mb x)=F_u(u,v,\mb x)=F_v(u,v,\mb x)=0$}\Big\},
\end{align*}
where
\begin{equation}\label{eq:F397}
F(u,v,\mb x):=\pm u^{2}v+v^{k-1}+x_1u+x_0+\sum_{i=2}^{k-1}x_{i} v^{i-1}.
\end{equation}
\end{Prop}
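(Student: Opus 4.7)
The plan is a direct algebraic verification, exactly parallel to the proof of Proposition~\ref{prop:A182}. The key observation is that $F$ is the generating family of a $D_k$-singularity and that the map $h_{D_k}$ is precisely its associated catastrophe map; the proposition is then the classical correspondence between these two objects. Concretely, I will show that the three scalar equations $F=F_u=F_v=0$ determine $x_0,x_1,x_2$ (in terms of $u,v$ and the free parameters $\mb x_3$) by exactly the formulas \eqref{eq:374a}--\eqref{eq:374c} defining $h_0,h_1,h_2$, which will establish both inclusions simultaneously.

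The verification proceeds in three steps, arranged triangularly. First, differentiating \eqref{eq:F397} in $u$ gives $F_u=\pm 2uv+x_1$, which is linear in $x_1$, so $F_u=0$ immediately yields $x_1=\mp 2uv=h_1(u,v,\mb x_3)$. Second, differentiating in $v$ gives
\[
F_v=\pm u^2+(k-1)v^{k-2}+\sum_{i=2}^{k-1}(i-1)\,x_i\,v^{i-2},
\]
and since $x_2$ is the unique constant-in-$v$ coefficient, $F_v=0$ isolates $x_2$ uniquely as $h_2(u,v,\mb x_3)$. Third, substituting these expressions for $x_1,x_2$ back into $F$ and collecting the $u^2v$- and $v^{k-1}$-contributions, the equation $F=0$ rearranges to
\[
x_0=\pm 2u^2 v+(k-2)v^{k-1}+\sum_{i=3}^{k-1}(i-2)\,x_i\,v^{i-1}=h_0(u,v,\mb x_3).
\]

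Combining these three identifications shows that a point $\mb x\in\R^k$ lies in $h(\R^{k-1})$ with parameter $(u,v,\mb x_3)$ if and only if $F=F_u=F_v=0$ at that point, which is the claimed equality $h(\R^{k-1})=\mc W_{D_k}$. I do not anticipate any substantive obstacle: the computation is essentially linear in $x_0,x_1,x_2$ and the triangular structure above avoids any resultant argument. The only delicate points are purely clerical, namely the consistent handling of the $\pm$-ambiguity (tracking whether $h=h_+$ or $h=h_-$ throughout) and the index shift between the exponents $v^{i-1}$ appearing in $F$ and the $v^{i-2}$ appearing in $F_v$, both of which are matters of careful bookkeeping rather than mathematical difficulty.
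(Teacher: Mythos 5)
Your proposal is correct and follows essentially the same route as the paper's own proof: use $F_u=0$ to solve for $x_1$, use $F_v=0$ to isolate $x_2$, and substitute both into $F=0$ to recover $x_0$, noting that each step is reversible so both inclusions follow. The computations check out, including the coefficient bookkeeping $\pm u^2v\mp 2u^2v\mp u^2v=\mp 2u^2v$ and $v^{k-1}-(k-1)v^{k-1}=-(k-2)v^{k-1}$.
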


\begin{proof}
We assume $\mb x\in \mc W_{D_k}$.
Since
$
F_u=\pm 2uv+x_1
$,
we have $x_1=\mp 2uv$, 
which corresponds to \eqref{eq:374b}.
On the other hand,
since
$$
F_v=\pm u^{2}+(k-1)v^{k-2}+x_2+\sum_{i=3}^{k-1}(i-1)x_{i} v^{i-2},
$$
we have
\begin{equation}\label{eq:673}
x_2=\mp u^2-(k-1)v^{k-2}-\sum_{i=3}^{k-1}(i-1)x_iv^{i-2},
\end{equation}
which corresponds to \eqref{eq:374c}.
Substituting this and $x_1=\mp 2uv$
into the equation $F=0$, we have
$$
x_0=\pm 2u^2 v+(k-2)v^{k-1}+\sum_{i=3}^{k-1}(i-2)x_iv^{i-1}.
$$
Since this corresponds to \eqref{eq:374a}, we have
$\mc W_{D_k}\subset h(\R^{k-1})$.
By reversing the above argument, we also obtain
the converse inclusion 
$\mc W_{D_k}\supset h(\R^{k-1})$. 
\end{proof}

\begin{Rmk}\label{rmk:433}
The $\pm$-ambiguity of the map $h$
affects the type of singular points
only when $k$ is even. In fact, if $k$ is odd, 
then $k-1$ is even,
and hence the replacement of $v$ by $-v$ does not affect
the leading  $v^{k-1}$ of $F$ 
in \eqref{eq:F397}
as a polynomial in $v$, but
replaces the term
$\pm u^{2}v$ 
by $\mp u^{2}v$.
So $h$ for $+$
is right-left equivalent to $h$ for $-$
when $k$ is odd.
On the other hand, if $k$ is even,
$h$ for $+$ is not right-left equivalent to $h$ for $-$.
\end{Rmk}

Set $\K=\R$ or $\C$ and define $\hat h=h$ or $h^\C$ 
as in \eqref{eq:hatH}.

\begin{Prop}\label{prop:D709}
For each $\mb x\in \K^k$,
the inverse image  $\hat h^{-1}(\mb x)$ 
consists of at most $k$ points.
Moreover, $\hat h^{-1}(\mb 0)=\{o\}$ 
holds.
Furthermore, $\hat h:\K^{k-1}\to \K^k$ 
is proper.
\end{Prop}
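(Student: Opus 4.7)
The plan is to eliminate $u$ from the system defining $\hat h^{-1}(\mb x)$, reducing it to a single polynomial equation in $v$ of degree $k$, and then to bound $u$ in terms of $v$ via the $h_2$-equation. Taking the $+$ sign in the $\pm$-ambiguity for concreteness (the other case is handled identically after sign changes), the relation $x_1=-2uv$ gives $u=-x_1/(2v)$ whenever $v\ne 0$. Substituting this into $x_2=-u^{2}-(k-1)v^{k-2}-\sum_{i=3}^{k-1}(i-1)x_iv^{i-2}$ and clearing denominators yields
\[
  P^{\mb x}(v):=4(k-1)v^{k}
   +4\sum_{i=3}^{k-1}(i-1)x_iv^{i}
   +4x_2v^{2}+x_1^{2},
\]
a polynomial in $v$ whose coefficients are polynomials in $\mb x$ and whose leading coefficient $4(k-1)$ is a nonzero constant.

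For the finiteness claim, I would count preimages via the roots of $P^{\mb x}$. Each nonzero root $v_0\in \K$ of $P^{\mb x}$ contributes at most one preimage, namely $(-x_1/(2v_0),v_0,\mb x_3)$. A preimage with $v=0$ is possible only when $x_1=0$, in which case the constant and linear terms of $P^{\mb x}$ vanish identically, so $v=0$ is automatically a root of multiplicity at least two. Since $\deg_v P^{\mb x}=k$, there are then at most $k-2$ distinct nonzero roots, each contributing one preimage, while $v=0$ contributes at most two preimages via $u^{2}=-x_2$; the total is at most $k$. When $x_1\ne 0$ the constant term $x_1^{2}$ is nonzero so $v=0$ is not a root, and the bound $k$ follows directly. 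For $\mb x=\mb 0$ the polynomial degenerates to $4(k-1)v^{k}=0$, forcing $v=0$, whence $u^{2}=0$ yields $u=0$, and so $\hat h^{-1}(\mb 0)=\{o\}$.

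For properness, let $K\subset \K^{k}$ be compact and let $\{p_n\}$ with $p_n=(u_n,v_n,\mb x_3^{(n)})$ be a sequence in $\hat h^{-1}(K)$. Then the components $\mb x_3^{(n)}$, $x_1^{(n)}$, and $x_2^{(n)}$ of $\hat h(p_n)$ are bounded, so the coefficients of $P^{\mb x^{(n)}}(v)$ are uniformly bounded while its leading coefficient is the fixed nonzero constant $4(k-1)$. Cauchy's bound (Remark~\ref{rmk:Cauchy}) therefore provides a uniform bound on $|v_n|$. The identity $u_n^{2}=-x_2^{(n)}-(k-1)v_n^{k-2}-\sum_{i=3}^{k-1}(i-1)x_i^{(n)} v_n^{i-2}$ then bounds $|u_n|$. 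A convergent subsequence of $\{p_n\}$ exists, and by continuity of $\hat h$ its limit lies in $\hat h^{-1}(K)$, establishing properness in both the real and complex settings.

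The main technical delicacy I anticipate is the bookkeeping in the finiteness count: the naive correspondence ``one preimage per root of $P^{\mb x}$'' breaks at $v=0$, which may contribute two distinct preimages rather than one. The structural observation that rescues the bound $k$ is that $v=0$ can be a root of $P^{\mb x}$ only when $x_1=0$, and then the simultaneous vanishing of the constant and linear terms of $P^{\mb x}$ forces $v=0$ to have multiplicity at least two, precisely compensating for the extra preimage.
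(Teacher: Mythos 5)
Your proof is correct and follows essentially the same strategy as the paper: eliminate $u$ via $u=\mp x_1/(2v)$ for $v\ne 0$, reduce to a degree-$k$ polynomial in $v$ with constant leading coefficient, count its roots, and invoke Cauchy's bound for properness. The only (cosmetic) differences are that you use the $h_2$-equation throughout where the paper switches to the $h_0$-equation for the $v$-bound in the properness argument, and your multiplicity-two observation at $v=0$ replaces the paper's case split on $x_1=0$ versus $x_1\ne 0$ in the finiteness count.
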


\begin{proof}
The statement $\hat h^{-1}(\mb 0)=\{o\}$ is easily checked.
First consider the case $x_1=0$.
If $v\ne 0$, then $u=0$.
By \eqref{eq:374a}, the possible values of $v$ with $v\ne 0$ 
are at most $k-3$. Including $v=0$, the possibilities for $v$ are at most $k-2$.

Hence we may assume $x_1\ne 0$.
Then $v\ne 0$, and by \eqref{eq:374b} we have
\begin{equation}\label{eq:809a}
u=\mp \frac{x_1}{2v}\in \K .
\end{equation}
Substituting this into \eqref{eq:673} shows that the possible values of $v$
are at most $k$, proving the first assertion.

It remains to prove the properness of $\hat h$.
Let $\mc K\subset \K^k$ be compact and fix 
$$
\mb x=(x_0,\ldots,x_{k-1})\in \mc K.
$$
Then there exists $C>0$ such that $\max(|x_0|,\dots,|x_{k-1}|)<C$.
If $v=0$, then by \eqref{eq:374c} we have $|x_2|=|u|^2$, 
hence $|u|<\sqrt{C}$,
so $\{v=0\}\cap \hat h^{-1}(\mc K)$ is bounded.
Assume $v\ne 0$. By \eqref{eq:374b}, $u=\mp x_1/(2v)$.
Substituting this into the relation $x_0=h_0(u,v,\mb x_3)$ yields
\[
x_0 v=(k-2)v^k+\left(\sum_{i=3}^{k-1}(i-2)\,x_i\, v^i\right) 
\ \pm \ \frac{x_1^2 }2 .
\]
By Remark~\ref{rmk:Cauchy}, $|v|$ is bounded in terms of $C$. 
Then from $x_2=h_2(u,v,\mb x_3)$,
we also obtain a bound for $|u|$. 
Hence $\{v\ne 0\}\cap \hat h^{-1}(\mc K)$ 
is bounded.
Therefore, $\hat h^{-1}(\mc K)$ is bounded.
Since $\hat h$ is 
continuous and $\mc K$ is closed, $\hat h^{-1}(\mc K)$ is closed.
So $\hat h^{-1}(\mc K)$ is compact.
Thus $\hat h$ is proper.
\end{proof}

The following is an analogue of
Proposition~\ref{prop:1c}:

\begin{Proposition}\label{prop:1cd}
The map $h$ is a $(k-1)$-dimensional real analytic map.
\end{Proposition}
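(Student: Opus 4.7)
The plan is to mimic the proof of Proposition~\ref{prop:1c} for type $A$: exploit the fact that the last $k-3$ components of $h$ are the trivial projection onto $\mb{x}_3$, so that the Jacobian matrix $J$ of $h$ contains a $(k-3)\times(k-3)$ identity block in the lower-right corner (the columns corresponding to $x_3,\ldots,x_{k-1}$ and the rows corresponding to the identically reproduced output coordinates). Consequently, $\operatorname{rank}\,dh = k-1$ at a point $(u,v,\mb{x}_3)$ if and only if the $3\times 2$ submatrix
\[
   M(u,v,\mb x_3):=\begin{pmatrix} (h_0)_u & (h_0)_v \\ (h_1)_u & (h_1)_v \\ (h_2)_u & (h_2)_v \end{pmatrix}
\]
has rank~$2$ there. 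Since the condition $\operatorname{rank} M = 2$ is open and defined by the non-vanishing of a real-analytic function (namely, the sum of the squares of the three $2\times 2$ minors of $M$), it suffices to exhibit a single point at which one of these minors does not vanish; the locus where all three minors vanish is then a proper real-analytic subset, whose complement is open and dense.

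To produce such a point, I would set $\mb x_3 = \mb 0$ and $u = 0$, leaving $v$ free. At such a point the partial derivatives simplify drastically: the column $((h_i)_u)_{i=0,1,2}$ reduces to $(0,\mp 2v,0)^t$, and $((h_i)_v)_{i=0,1,2}$ reduces to $((k-2)(k-1)v^{k-2},\,0,\,-(k-1)(k-2)v^{k-3})^t$. The $2\times 2$ minor formed from rows $i=0,1$ then equals
\[
   (h_0)_u(h_1)_v - (h_0)_v(h_1)_u
   = \pm 2(k-1)(k-2)\,v^{k-1},
\]
which is nonzero whenever $v\neq 0$, using the standing hypothesis $k\ge 4$.

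Hence $M$ has rank $2$ on a nonempty open subset of $\R^{k-1}$, and by the preceding real-analyticity argument the set on which $\operatorname{rank}\,dh = k-1$ is open and dense. This establishes that $h_{D_k}$ is a $(k-1)$-dimensional real-analytic map in the sense of the definition.

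I do not expect any real obstacle here; the only point requiring a small amount of thought is the choice of the evaluation point $(u,v,\mb x_3) = (0,v_0,\mb 0)$ with $v_0\neq 0$, tailored so that the minor involving the rows of $h_0$ and $h_1$ simplifies to a monomial in $v_0$ and in particular is manifestly nonzero.
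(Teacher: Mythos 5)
Your proof is correct and follows essentially the same approach as the paper: reduce to the rank of the $(u,v)$-derivatives of $h_0,h_1,h_2$ via the identity block in the Jacobian, then exhibit a point where a $2\times2$ minor is nonzero. The only (immaterial) difference is the choice of witness — you use the $(h_0,h_1)$-minor at $(u,v,\mb x_3)=(0,v_0,\mb 0)$ with $v_0\neq0$, while the paper uses the $(h_0,h_2)$-minor at $(u_0,0,\mb 0)$ with $u_0\neq0$.
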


\begin{proof}
We set
$$
\mc M:=\pmt{(h_0)_u & (h_0)_v \\
(h_2)_u & (h_2)_v
}.
$$
Substituting $x_3=\cdots=x_{k-1}=0$, 
we have
$$
\mc M=
\pmt{\pm 4uv & \pm 2u^2+(k-1)(k-2)v^{k-2} \\
\mp 2u & -(k-1)(k-2)v^{k-3}
}.
$$
Removing  the first three columns and
rows of the Jacobian matrix of $h$,
we obtain the identity matrix of rank $k-3$. 
To prove the assertion, it is sufficient to
show that $\mc M$ is of rank two for some point.
In fact, $\mc M$ is of rank $2$ at $(u,0)$ if $u\ne 0$.
\end{proof}

Substituting \eqref{eq:809a} into 
the definition of $F(u,v,\mb x)$,  
we define
\begin{equation}
\label{eq:tF}
A^{\mb x}(v):= v \, F\!\left(\mp\frac{x_1}{2v},\, v,\, \mb x\right).
\end{equation}
To simplify notation, we denote the derivative of this polynomial 
with respect to $v$ by
\begin{equation}\label{eq:Bx}
B^{\mb x}(v) := \frac{dA^{\mb x}(v)}{dv}.
\end{equation}

\begin{Prop}\label{new:L}
Let $\K$ be either $\R$ or $\C$. 
For $\mb x\in \K^k$,
there exists $(u,v)\in \K\times (\K\setminus \{0\})$
such that $\hat h(u,v,\mb x_3)=\mb x$
if and only if
there exists $v\in \K\setminus \{0\}$ such that
$A^{\mb x}(v)=B^{\mb x}(v)=0$.
\end{Prop}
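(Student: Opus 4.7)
The plan is to eliminate the $u$-variable from the three equations $F=F_u=F_v=0$ characterizing $\mc W_{D_k}$ (Proposition~\ref{prop:D644}), reducing them to two polynomial conditions in $v$ alone. The equation $F_u=\pm 2uv+x_1=0$ together with $v\ne 0$ forces $u=\mp x_1/(2v)$, so substituting into $F=0$ gives exactly $A^{\mb x}(v)=0$ by the definition \eqref{eq:tF}. The remaining equation $F_v=0$ must then be matched with $B^{\mb x}(v)=0$; this is the content of the chain-rule computation below. Throughout, Proposition~\ref{prop:D644} is used in its $\K$-version (its proof is purely algebraic and extends verbatim from $\R$ to $\C$).

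For the forward direction, assume $\hat h(u,v,\mb x_3)=\mb x$ with $v\ne 0$. Proposition~\ref{prop:D644} gives $F=F_u=F_v=0$ at $(u,v,\mb x)$, and the relation $F_u=0$ fixes $u=\mp x_1/(2v)$. Then $A^{\mb x}(v)=v\,F(u,v,\mb x)=0$ by \eqref{eq:tF}. To handle $B^{\mb x}$, set $\phi(v):=\mp x_1/(2v)$, so that $A^{\mb x}(v)=v\,F(\phi(v),v,\mb x)$, and differentiate via the chain rule:
\[
  B^{\mb x}(v)=F(\phi(v),v,\mb x)+v\bigl(F_u(\phi(v),v,\mb x)\,\phi'(v)+F_v(\phi(v),v,\mb x)\bigr).
\]
At the point under consideration each of $F$, $F_u$, $F_v$ vanishes, hence $B^{\mb x}(v)=0$.

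For the converse, suppose $v\in\K\setminus\{0\}$ satisfies $A^{\mb x}(v)=B^{\mb x}(v)=0$, and define $u:=\mp x_1/(2v)$. By construction $F_u(u,v,\mb x)=\pm 2uv+x_1=0$. Since $A^{\mb x}(v)=v\,F(u,v,\mb x)=0$ and $v\ne 0$, we obtain $F(u,v,\mb x)=0$. Plugging these two vanishings into the chain-rule identity above reduces it to $B^{\mb x}(v)=v\,F_v(u,v,\mb x)$, and $v\ne 0$ therefore forces $F_v(u,v,\mb x)=0$. All three of $F$, $F_u$, $F_v$ vanish at $(u,v,\mb x)$, so Proposition~\ref{prop:D644} yields $\hat h(u,v,\mb x_3)=\mb x$.

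The argument is almost purely formal; the one point deserving care is the chain-rule identity, which is the precise mechanism converting $F_v=0$ into $B^{\mb x}=0$ once $F=F_u=0$ is known. The only genuine obstacle is ensuring that Proposition~\ref{prop:D644} is available over $\K=\C$, but its proof involves only linear elimination of $x_0, x_1, x_2$ and no order-theoretic input, so it transfers to the complex setting without modification.
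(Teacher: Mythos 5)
Your proof is correct and follows essentially the same route as the paper: substitute $u=\mp x_1/(2v)$ (forced by $F_u=0$ and $v\ne 0$) so that $A^{\mb x}(v)=v\,F$, then use the chain-rule identity for $B^{\mb x}=dA^{\mb x}/dv$ — in which the $F_u$-term drops because $F_u$ vanishes identically along $u=\phi(v)$ — to convert $F_v=0$ into $B^{\mb x}=0$ and back. The paper's argument is the same in both directions, including the implicit use of the pointwise, field-independent version of Proposition~\ref{prop:D644}.
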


\begin{proof}
Throughout the proof we assume $v\neq 0$.
Since $F_u=\pm 2uv+x_1$, we set $u(v):=\pm x_1/(2v)$.
Then, we can write
$
A^{\mb x}(v)=v F(u(v),v).
$
Since $F_u(u(v),v)=0$, differentiating it, we have
\begin{equation}\label{eq:1612}
B^{\mb x}(v)=A^{\mb x}_v(v)=F(u(v),v)+v F_v(u(v),v). 
\end{equation}
So $A^{\mb x}=B^{\mb x}=0$ holds under the assumption that
$F=F_u=F_v=0$. 
Conversely, suppose that
there exists $v\in \K\setminus \{0\}$ such that
$A^{\mb x}(v)=B^{\mb x}(v)=0$ holds.
For such a $v\in \K$,
we set $u:=u(v)$. Then 
$F_u(u(v),v)=0$, that is, we have $u=u(v)$.
Moreover, $A^{\mb x}(v)=0$ implies $F(u(v),v)=0$.
Since \eqref{eq:1612} implies that
$
B^{\mb x}(v)=v F_v(u(v),v)
$,
we also have
$
F_v(u(v),v)=0.
$
\end{proof}

We set
$
\Theta_{D_k}(\mb x):=\op{Res}_v(A^{\mb x},B^{\mb x}).
$

\begin{Theorem}\label{thm:D}
The image $\mc W_{D_k}$ of the standard map $h:=h_{D_k}$ $(k\ge 4)$
is a global main-analytic set of $\R^{k}$
whose main-analytic function is $\Theta_{D_k}$.
\end{Theorem}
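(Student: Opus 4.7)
The plan is to follow the pattern of Theorem~\ref{thm:A622}, using Proposition~\ref{new:L} as the bridge between preimages of $h$ and common roots of the characteristic pair $(A^{\mb x},B^{\mb x})$. First I would establish the inclusion $h(\R^{k-1})\subseteq \mc Z(\Theta_{D_k})$: for a preimage $(u,v,\mb x_3)$ with $v\neq 0$ this is the reverse direction of Proposition~\ref{new:L}, while for $v=0$ a direct inspection of \eqref{eq:374a}--\eqref{eq:374c} gives $x_0=x_1=0$, and then $A^{\mb x}(0)=\mp x_1^2/4=0$ and $B^{\mb x}(0)=x_0=0$ both hold. Combined with Propositions~\ref{prop:HDIM} and~\ref{prop:1cd}, this yields $\dim_H h(\R^{k-1})=k-1$; and a concrete specialization (for example $\mb x=(0,1,0,\ldots,0)$, which makes $A^{\mb x}(v)=v^k\mp 1/4$ separable) shows that $\Theta_{D_k}$ is not identically zero, so $\dim_H \mc Z(\Theta_{D_k})\le k-1$. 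The two sets thus share the same Hausdorff dimension, and it remains to show that
\[
\mc L := \mc Z(\Theta_{D_k})\setminus h(\R^{k-1})
\]
has $\dim_H \mc L<k-1$.

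The strategy for bounding $\mc L$ is to classify $\mb x\in\mc L$ according to the location in $\C$ of the common roots of $A^{\mb x}$ and $B^{\mb x}$. Proposition~\ref{new:L} rules out a real nonzero common root, since such a root would force $\mb x\in h(\R^{k-1})$; hence every common root is either $v=0$ or non-real. If $v=0$ is a common root, then $B^{\mb x}(0)=x_0$ and $A^{\mb x}(0)=\mp x_1^2/4$ force $x_0=x_1=0$, so this contribution to $\mc L$ lies in the codimension-two linear subspace $\{x_0=x_1=0\}$ of dimension $k-2$. If instead a common root $\alpha$ is non-real, then since $B^{\mb x}=(A^{\mb x})'$ it is a multiple root of $A^{\mb x}$, and by reality of the coefficients $\overline{\alpha}$ is also a multiple root; this yields a factorization
\[
A^{\mb x}(v)=(v^2+pv+q)^2\,P(v),\qquad P(v)=v^{k-4}+\sum_{j=0}^{k-5}c_j\,v^j,
\]
with $p,q\in\R$ and $p^2<4q$. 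The coefficients of $A^{\mb x}$, and hence the components of $\mb x$ (with $x_1$ determined only up to sign via the constant term $\mp x_1^2/4$), are polynomial in the $k-2$ parameters $(p,q,c_0,\ldots,c_{k-5})$, so this part of $\mc L$ is the image of a locally Lipschitz map from a subset of $\R^{k-2}$ and has Hausdorff dimension at most $k-2$. Combining both subcases gives $\dim_H\mc L\le k-2<k-1$, proving that $\Theta_{D_k}$ is a main-analytic function of $h(\R^{k-1})$.

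The main technical step will be the parameter count in the non-real subcase: one must check that every $\mb x\in\mc L$ for which $A^{\mb x}$ has a non-real multiple root is captured by the factorization above (which is clear, since one simply divides $A^{\mb x}$ by the real quadratic factor $(v^2+pv+q)^2$), and that the resulting map $(p,q,\mb c)\mapsto \mb x$ is polynomial, hence locally Lipschitz on bounded pieces. The small-$k$ boundary cases $k=4$ and $k=5$, where $P(v)$ has degree $0$ or $1$, present no extra difficulty and can be handled by the same argument.
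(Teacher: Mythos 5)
Your proposal is correct and follows essentially the same route as the paper: use Proposition~\ref{new:L} to reduce membership in $h(\R^{k-1})$ to a real nonzero common root of $(A^{\mb x},B^{\mb x})$, absorb the $v=0$ case into the codimension-two set $\{x_0=x_1=0\}$, and bound the non-real-root locus by the parameter count coming from the factorization $A^{\mb x}=(v^2+pv+q)^2P(v)$, giving dimension at most $k-2$. Your additional remarks (verifying $\Theta_{D_k}\not\equiv 0$ by a specialization, and noting the sign ambiguity of $x_1$ coming from the constant term $\mp x_1^2/4$) are slightly more careful than the paper's own write-up but do not change the argument.
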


\begin{proof}
We set
$
\mc L_1:=\{\mb x:=(x_0,x_1,\ldots,x_{k-1})\in \R^k\,;\, x_0=x_1=0\}.
$
If $v=0$, then
$
h(u,0,\mb x_3)=(0,0,\mp u^2,\mb x_3).
$
In particular, we have 
$$
h(\{(u,0,\mb x_3)\in \R^{k-1}\,;\, \mb x_3\in \R^{k-3}\})
=\{(0,0,x_2,\ldots,x_{k-1})\in \R^k\,;\, 
\mp x_2\ge 0\}\subset \mc L_1.
$$
On the other hand, we set
$$
\mc L_2:=\{\mb x \in \R^k\,;\, \text{there exists $v\in \C\setminus \R$
such that $A^{\mb x}(v)=B^{\mb x}(v)=0$}\}.
$$
By Proposition~\ref{new:L}, we have
$
h(\R^{k-1})\subset \Big(\mc Z(\Theta_{D_k})\setminus \mc L_2\Big)\cup \mc L_1.
$
If $x_0=x_1=0$, then $v=0$ is a double root of the polynomial 
$A^{\mb x}(v)$,
and hence $\Theta_{D_k}(\mb x)$ vanishes.
In particular, we have
$
h(\R^{k-1})\subset \mc Z(\Theta_{D_k})\setminus \mc L_2
$.
It is sufficient to prove that 
the set $\mc L_2$  is of dimension less than
$k-1$ in $\R^k$:
The polynomial $A^{\mb x}$ can be explicitly written as
\begin{align}
A^{\mb x}(v)&=
\left.
\pm u^2v^2+ v^{k}+x_{k-1}v^{k-1}+\cdots+x_2 v^2+x_1uv+x_0v 
\right|_{u=\mp x_1/(2v)} \label{eq:AD}
\\
&=v^{k}+x_{k-1}v^{k-1}+\cdots+x_2 v^2+x_0v
\mp\frac{1}4 x_1^2. \nonumber
\end{align}
We fix a non-real double root $\alpha$ of 
the equation $A^{\mb x}(v)=0$.
Then  $\overline \alpha$ is also a double root, 
and we can write
$$
A^{\mb x}(v)
=\Big(v^2-(\alpha+\bar \alpha)v+|\alpha|^2\Big)^2 \left(v^{k-4}
+\sum_{i=0}^{k-5} c_iv^i\right),
$$
where $c_0,\ldots, c_{k-5}\in \R$.
This expression implies that
$\mb x(\in \mc L_2)$
depends only on the
parameters $\alpha$ and $c_0,\ldots, c_{k-5}$.
So we have 
$$
\dim_H\,\mc L_2\le \dim_H \C+\dim_H \R^{k-4}=2+(k-4)=k-2,
$$
proving the assertion.
\end{proof}

\begin{Rmk}\label{rmk:d}
Since Proposition~\ref{new:L} holds for $\K=\C$, we have 
$h^\C(\C^{k-1}\setminus\mc Z_\C(v))\subset\mc Z_\C(\Theta_{D_k})$.
As $h^\C$ is proper, 
$$ 
h^\C(\C^{k-1})
=h^\C\bigl(\overline{\C^{k-1}\setminus\mc Z_\C(v)}\bigr)
=\overline{h^\C(\C^{k-1}\setminus\mc Z_\C(v))}
\subset\mc Z_\C(\Theta_{D_k}).
$$ 
Moreover, $h^0(u,v,\mb x_3)=h^1(u,v,\mb x_3)=0$ when $v=0$, 
and this case occurs only for $x_0=x_1=0$.  
Hence Proposition~\ref{new:L} also gives 
the inclusion 
$$
\mathcal Z_\C(\Theta_{D_k})\setminus \mc L
\subset h^\C(\C^{k-1})
\qquad \Big(\mc L:=\mathcal Z_\C(x_0)\cap\mathcal Z_\C(x_1)\Big).
$$
By the properness of $h^\C$, the image $h^\C(\C^{k-1})$ 
is closed in $\C^k$. 
Since $\mc L$ has empty interior in $\mathcal Z_\C(\Theta_{D_k})$, 
we have
\[
\mathcal Z_\C(\Theta_{D_k})
=\overline{\mathcal Z_\C(\Theta_{D_k})\setminus \mc L
}\subset
\overline{h^\C(\C^{k-1})}=h^\C(\C^{k-1}),
\]
and $h^\C(\C^{k-1})=\mathcal Z_\C(\Theta_{D_k})$.

We here prove that $\Theta_{D_k}$ ($k\ge 4$) is squarefree:
We fix $\sigma\in\{\pm1\}$ and $x_1\neq 0$, and consider
\[
A_s(v)=G(v)\bigl((v-x_1/2)^2+s\bigr),
\]
where $G(v)$ is a monic polynomial of degree $k-2$ with real
coefficients such that $G(0)=-\sigma$ and $G(x_1/2)\neq 0$.
Since $G$ does not depend on $s$, all roots of $G$ remain fixed as $s$
varies. In particular they stay simple. Hence the only 
roots of $A_s$ depending on $s$
are
$
v_\pm(s)=x_1/2\pm i\sqrt{s},
$
and
\[
v_+(s)-v_-(s)=2i\sqrt{s}.
\]
All other factors in the Vandermonde product stay nonzero and smooth
in $s$, so
\[
\Delta(A_s)=C\,s + O(s^2), \qquad C\neq 0.
\]
Thus, the discriminant vanishes to order exactly one at $s=0$, which
shows that $\Theta_{D_k}$ is squarefree.
Therefore, $\Theta_{D_k}$ coincides, up to a nonzero scalar,  
with the defining polynomial of $h^\C(\C^{k-1})$.  
In particular, it agrees with the discriminant polynomial of type $D_k$,  
which is known to be irreducible over $\C$.
\end{Rmk}

\begin{Exa}
Consider the case $k=4$ and fix $\sigma\in\{\pm1\}$. Set
\[
F(u,v,\mathbf{x}):=\sigma\,u^{2}v+v^{3}+x_1u+x_0+x_2v+x_3v^2.
\]
Substituting $u:=-\,x_1/(2\sigma v)$, we obtain
\[
A^{\mathbf{x}}(v)=v^4+x_3v^3+x_2v^2+x_0v\ -\tfrac{\sigma}4\,x_1^2.
\]
\end{Exa}

\section{Singularities of type $E_6$}
As mentioned in Section 1, the standard map
$
h:=h_{E_6}:\R^{5}\to \R^{6}
$
of an $E_6$-singular point
is defined by
$$
h(u,v,\mb x_3):=
\Big(
h_0(u,v,\mb x_3),
h_1(u,v,\mb x_3),
h_2(u,v,\mb x_3),\mb x_3
\Big),
$$
where 
$\mathbf{x}_3 := (x_3, x_4, x_5)$ 
is the subvector of 
$\mathbf{x}=(x_0,\ldots, x_5) \in \mathbb{R}^6$
and
\begin{align}\label{eq:1022a}
h_0(u,v,\mb x_3)&:=
2u^3+3v^4+v^2x_3+uv\, \delta_6(v), \\
\label{eq:1022b}
h_1(u,v,\mb x_3)&:=-3u^2-vx_4-v^2x_5, \\
\label{eq:1022c}
h_2(u,v,\mb x_3)&:=
-4 v^3-2vx_3-\delta_6(v)u, \\
\label{eq:eq:d6}
\delta_6(v)&:=x_4+2v x_5.
\end{align}
The following is an analogue of 
Propositions~\ref{prop:A182} and \ref{prop:D644}:

\begin{Prop} \label{prop:e6}
The image of $h$ coincides with
the following set
\begin{align*}
\mc W_{E_6}&:=\Big\{\mb x:=(x_0,\ldots,x_{5})\in \R^6\,;\, 
\text{there exists $(u,v)\in \R^2$ such that}
\\
& \phantom{aaaaaaaaaaaaaaaaaaaaaaaaa}
\text{$F(u,v,\mb x)=F_u(u,v,\mb x)=F_v(u,v,\mb x)=0$}\Big\},
\end{align*}
where 
\begin{equation}\label{eq:E1047}
F(u,v,\mb x):=
u^3+v^4+x_5 uv^2+x_4 uv+x_3 v^2+x_2 v+x_1 u+x_0.
\end{equation}
\end{Prop}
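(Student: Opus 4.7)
The plan is to follow exactly the pattern of Propositions~\ref{prop:A182} and \ref{prop:D644}: verify that the three equations $F=F_u=F_v=0$ can be solved successively for $x_1$, $x_2$, $x_0$, and that the resulting expressions agree with $h_1$, $h_2$, $h_0$ in \eqref{eq:1022a}--\eqref{eq:1022c}.

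I would proceed as follows. Assume $\mb x\in\mc W_{E_6}$ and fix $(u,v)\in\R^2$ with $F=F_u=F_v=0$. Differentiating \eqref{eq:E1047}, I obtain
\[
F_u=3u^{2}+x_5 v^{2}+x_4 v+x_1,\qquad
F_v=4v^{3}+2x_5 uv+x_4 u+2x_3 v+x_2.
\]
The equation $F_u=0$ solves immediately for $x_1$ and yields $x_1=-3u^{2}-x_4 v-x_5 v^{2}$, which matches \eqref{eq:1022b}. Grouping terms in $F_v=0$ by the definition $\delta_6(v)=x_4+2x_5 v$, I get $x_2=-4v^{3}-2x_3 v-\delta_6(v)\,u$, agreeing with \eqref{eq:1022c}.

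Next I substitute these expressions for $x_1,x_2$ into $F=0$ to recover $x_0$. Writing $F=0$ as $x_0=-u^{3}-v^{4}-x_5 uv^{2}-x_4 uv-x_3 v^{2}-x_2 v-x_1 u$, the contributions from $-x_1 u$ and $-x_2 v$ are
\[
-x_1 u = 3u^{3}+x_4 uv+x_5 uv^{2},\qquad
-x_2 v = 4v^{4}+2x_3 v^{2}+x_4 uv+2x_5 uv^{2}.
\]
Collecting like terms, I expect the coefficients of $u^3$, $v^4$, $x_3 v^2$ and the $uv$-monomials to combine precisely into the pattern $2u^{3}+3v^{4}+x_3 v^{2}+uv\,\delta_6(v)$, which is \eqref{eq:1022a}. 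This shows $\mc W_{E_6}\subset h(\R^{5})$.

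For the reverse inclusion, given $\mb x=h(u,v,\mb x_3)$, the same algebraic identities run backwards: by construction $x_1,x_2$ are precisely the values that make $F_u$ and $F_v$ vanish at $(u,v,\mb x_3)$, and then the formula for $x_0$ is exactly what is needed to make $F$ vanish. Hence $\mb x\in\mc W_{E_6}$.

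The only mildly nontrivial point is the bookkeeping in the final substitution---making sure that the $x_4 uv$ and $x_5 uv^{2}$ contributions coming from $-x_1 u$ and $-x_2 v$ combine with those already present in $F$ to give exactly $uv\,\delta_6(v)$, and not some stray residual. This is a short finite computation, entirely analogous to the step in the proof of Proposition~\ref{prop:D644}, so no essential obstacle is expected.
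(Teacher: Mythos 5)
Your proposal is correct and follows essentially the same route as the paper's proof: solve $F_u=0$ and $F_v=0$ for $x_1$ and $x_2$, match them with $h_1$ and $h_2$, substitute into $F=0$ to recover $x_0=h_0$, and reverse the argument for the opposite inclusion. The final bookkeeping you flag does work out ($-u^3+3u^3=2u^3$, $-v^4+4v^4=3v^4$, and the $x_4uv$, $x_5uv^2$ terms sum to $uv\,\delta_6(v)$), so there is no gap.
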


\begin{proof}
Computing $F_u$ (resp.  $F_v$) and 
replacing $x_1$ by $h_1(u,v,\mb x_3)$
(resp. $x_2$ by $h_2(u,v,\mb x_3)$)
we obtain \eqref{eq:1022b} and \eqref{eq:1022c}.
On the other hand, $F_u=F_v=0$ imply
$$
x_1=-3 u^2 - x_4 v - x_5 v^2,\qquad x_2=-4 v^3 - 2 x_3v - x_4u - 2x_5 u v.
$$
By substituting them into $F=0$, we obtain \eqref{eq:1022a}.
\end{proof}

As in
Propositions~\ref{prop:1c} and \ref{prop:1cd},
the following assertion can be proved easily.

\begin{Prop}\label{prop:1101}
The map $h$ is a $5$-dimensional real analytic map.
\end{Prop}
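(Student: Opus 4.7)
The plan is to mirror the strategy already used in Propositions~\ref{prop:1c} and \ref{prop:1cd}: exhibit a single point at which the Jacobian of $h$ has rank $5$, and then invoke real-analyticity to conclude that the rank is maximal on a dense open subset of $\R^5$.

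First, I would read off the Jacobian $J$ of $h$ with respect to the coordinates $(u,v,x_3,x_4,x_5)$. Because the last three components of $h$ are literally $x_3$, $x_4$, $x_5$, the bottom three rows of $J$ contribute an identity block in the last three columns and zeros elsewhere. Expanding along these rows reduces the rank question for $J$ to that of the $3\times 2$ submatrix
$$
\mc M := \pmt{(h_0)_u & (h_0)_v \\ (h_1)_u & (h_1)_v \\ (h_2)_u & (h_2)_v},
$$
so it suffices to show that $\mc M$ has rank $2$ at some point; indeed, $\op{rank} J = 3 + \op{rank} \mc M = 5$ whenever this occurs.

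The actual verification I would do by specializing to $x_3=x_4=x_5=0$. In this case
$$
h_0 = 2u^3 + 3v^4, \qquad h_1 = -3u^2, \qquad h_2 = -4v^3,
$$
and the $2\times 2$ minor formed by the first two rows of $\mc M$ becomes
$$
\det \pmt{6u^2 & 12 v^3 \\ -6u & 0} = 72\, u\, v^3,
$$
which is nonzero whenever $uv \ne 0$. Hence $\mc M$ has rank $2$ at any point of the form $(u,v,0,0,0)$ with $uv \ne 0$, and the open condition $\op{rank} J = 5$ is therefore satisfied on a nonempty open set; by real-analyticity it holds on a dense open subset of $\R^5$.

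The main obstacle here is essentially absent: the structural observation that the identity block reduces everything to a $3\times 2$ submatrix was already the decisive step in the $D_k$ case, and the new $E_6$-specific content is just the explicit evaluation at $\mb x_3 = \mb 0$. The only slight care needed is in picking a specialization where one of the three possible $2\times 2$ minors of $\mc M$ is manifestly nonzero, and the choice $\mb x_3 = \mb 0$ with generic $(u,v)$ works immediately.
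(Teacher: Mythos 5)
Your proof is correct and follows exactly the strategy the paper intends (the paper omits the proof, referring to Propositions~\ref{prop:1c} and \ref{prop:1cd}): reduce to the $3\times 2$ block of $u,v$-derivatives of $h_0,h_1,h_2$ via the identity block coming from the components $x_3,x_4,x_5$, and exhibit a nonvanishing $2\times 2$ minor at a specialization. The computation $\det\bigl(\begin{smallmatrix}6u^2 & 12v^3\\ -6u & 0\end{smallmatrix}\bigr)=72uv^3$ at $\mb x_3=\mb 0$ is correct, and the passage from one point of full rank to a dense open set is the standard real-analyticity argument used throughout the paper.
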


Set $\K=\R$ or $\C$ and define $\hat h=h$ or $h^\C$ as in
the case of type D:

\begin{Prop}\label{prop:E1057}
The map $\hat h:\K^{5}\to \K^6$ 
is proper.
Moreover, for each $\mb x\in \K^6$,
the inverse image $\hat h^{-1}(\mb x)$ is finite.
Furthermore, $\hat h^{-1}(\mb 0)=\{o\}$.
\end{Prop}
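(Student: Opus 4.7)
The plan is to handle the three claims in order: $\hat h^{-1}(\mb 0)=\{o\}$, finiteness of every fiber, and properness. The first is immediate: setting $\mb x_3=\mb 0$ in \eqref{eq:1022b} and \eqref{eq:1022c} yields $h_1=-3u^2$ and $h_2=-4v^3$, so the equations $h_1=h_2=0$ force $u=v=0$.

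For the remaining two claims, the idea is to use Proposition~\ref{prop:e6} to replace the system $\hat h(u,v,\mb x_3)=\mb x$ with $F=F_u=F_v=0$, where $F$ is as in \eqref{eq:E1047}, and then eliminate $u$ to produce a polynomial in $v$ alone with a constant (in particular nonzero) leading coefficient. Explicitly, $F_u=0$ gives $3u^2=-(x_1+x_4v+x_5v^2)$, while $F_v=0$ gives $(x_4+2x_5v)\,u=-(x_2+2x_3v+4v^3)$. Squaring the latter and substituting the former yields
\begin{equation}\label{eq:E6elim}
3(x_2+2x_3v+4v^3)^2+(x_4+2x_5v)^2(x_1+x_4v+x_5v^2)=0,
\end{equation}
a polynomial of degree $6$ in $v$ whose leading term is $48v^6$, arising entirely from the $3\cdot(4v^3)^2$ contribution and independent of~$\mb x$.

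Granting \eqref{eq:E6elim}, finiteness follows immediately: for fixed $(x_1,x_2,\mb x_3)$ there are at most six values of $v$, and for each such $v$ at most two values of $u$ (uniquely determined by $F_v=0$ when $x_4+2x_5v\neq 0$, and by $u^2=-(x_1+x_4v+x_5v^2)/3$ otherwise). For properness, let $\mc K\subset\K^6$ be compact; the coefficients of \eqref{eq:E6elim}, viewed as a polynomial in $v$, are bounded in terms of $\mc K$ while the leading coefficient is the constant $48$, so by Cauchy's bound (Remark~\ref{rmk:Cauchy}) $|v|$ is bounded over $\hat h^{-1}(\mc K)$. The identity $3u^2=-(x_1+x_4v+x_5v^2)$ then bounds $|u|$, so $\hat h^{-1}(\mc K)$ is bounded; continuity of $\hat h$ yields closedness, hence compactness.

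The only genuine obstacle is verifying that the elimination produces a polynomial with nonvanishing constant leading coefficient of the expected degree, and this works out cleanly for $E_6$. I expect this to be the template for the analogous statements in Sections~6 and~7, where the verification of a nonvanishing leading coefficient of the corresponding resultant-type polynomial will again be the single delicate point.
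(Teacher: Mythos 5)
Your proof is correct and follows essentially the same route as the paper: it eliminates $u$ from $F_u=F_v=0$ to obtain exactly the sextic $A^{\mb x}(v)$ of \eqref{eq:E6-A} with constant leading coefficient $48$, and then applies Cauchy's bound for finiteness and properness. The only (welcome) difference is that by squaring the relation $(x_4+2x_5v)u=-(x_2+2x_3v+4v^3)$ before substituting $3u^2=-(x_1+x_4v+x_5v^2)$, your identity \eqref{eq:E6elim} holds uniformly, so you avoid the separate treatment of the locus $\delta_6(v)=0$ that the paper handles via the cubic $x_2=-4v^3-2vx_3$.
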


\begin{proof}
The fact $\hat h^{-1}(\mb 0)=\{o\}$ is immediate.
Write $x_i=h_i(u,v,x_3,x_4,x_5)$ ($i=0,1,2$) as 
in \eqref{eq:1022a} and \eqref{eq:eq:d6}.
From \eqref{eq:1022b}, we have
\begin{equation}\label{eq:194a}
u^2=\frac{-x_1-vx_4-v^2x_5}{3}.
\end{equation}
If $\delta_6(v):=x_4+2vx_5=0$, then \eqref{eq:1022c} 
gives $x_2=-4v^3-2vx_3$, so $v$ is a root of a cubic. 
Hence $v$ and then $u$ are finite in number.
If $\delta_6(v)\ne 0$, then from \eqref{eq:1022c}
\begin{equation}\label{eq:1034}
u=-\frac{4v^3+2vx_3+x_2}{\delta_6(v)}.
\end{equation}
Substituting \eqref{eq:1034} into \eqref{eq:194a}, we have
\begin{align}\label{eq:E6-A}
A^{\mb x}(v):={}&
48 v^6 + 4 (12 x_3 + x_5^3) v^4 + 8 (3 x_2 + x_4 x_5^2) v^3  \\
&\quad + (4 x_1 x_5^2 + 5 x_4^2 x_5 + 12 x_3^2) v^2  \nonumber \\
&+ (4 x_1 x_4 x_5 + 12 x_2 x_3 + x_4^3) v + x_1 x_4^2 + 3 x_2^2. \nonumber
\end{align}
The leading term is $48v^6$, so $v$ (hence $u$) has finitely many possibilities.
Thus $\hat h^{-1}(\mb x)$ is finite.

For properness, let $\mc K\subset\K^6$ be 
compact and assume $\mb x=\hat h(u,v,\mb x_3)\in \mc K$.
If $\delta_6(v)=0$, the cubic equation in $v$ above yields a bound 
on $|v|$ by Remark \ref{rmk:Cauchy}.
By
\eqref{eq:194a},
$|u|$ is bounded.
Therefore $\hat h^{-1}(\mc K)$ is bounded and closed, hence compact. 
Thus $\hat h$ is proper.
\end{proof}

Substituting  \eqref{eq:194a} and \eqref{eq:1034}
into the equation $h_0(u,v, x_3,\ldots,x_5)=x_0$
(cf. \eqref{eq:1022a}), we obtain
the equation
$B^{\mb x}(v)=0$, where $\mb x=(x_0,\ldots,x_5)\in \R^6$ and
\begin{align}
B^{\mb x}(v)&:=2 x_5v^5  +5 x_4v^4 +(8 x_1-2 x_3 x_5)v^3 
+(x_3 x_4-4 x_2x_5)v^2  \\
\nonumber
&\phantom{aaaaaaaaaaaaa}+(-6 x_0 x_5+4 x_1 x_3-x_2 x_4)v 
-3 x_0 x_4+2x_1 x_2.
\end{align}

If $h(u,v,\mb x_3)=\mb x$,
then the two polynomials 
$A^{\mb x}(v)$ and $B^{\mb x}(v)$ must have a common root in $\R$,
and the resultant
\begin{equation}\label{R6}
\mc R(x_0,\ldots,x_{5}):=\op{Res}_v(A^{\mb x},B^{\mb x})
\end{equation}
must vanish. For the latter discussions, 
we also set (cf. \eqref{eq:550})
\begin{equation}\label{S6}
\mc S(x_0,\ldots,x_{5}):=\op{Psc}_v(A^{\mb x},B^{\mb x}).
\end{equation}

Corresponding to \eqref{eq:1022a}, \eqref{eq:1022b} and \eqref{eq:1022c},
for each $\mb x:=(x_0,\ldots,x_5)\in \R^6$,  we set
\begin{align}\label{eq:1022a1}
g_0^{\mb x}(u,v)&:=2u^3+3v^4+v^2x_3-x_0+uv\, \delta_6(v), \\
\label{eq:1022b1}
g_1^{\mb x}(u,v)&:=3u^2+vx_4+v^2x_5+x_1, \\
\label{eq:1022c1}
g_2^{\mb x}(u,v)&:=-4 v^3-2vx_3-x_2-\delta_6(v)u.
\end{align}
By definition, we have the following:
Set $\K=\R$ or $\C$ and define $\hat h=h$ or $h^\C$.

\begin{Proposition}\label{prop:1742}
For $\mb x\in \K^6$ and $u,v\in \K$,
$\hat h(u,v,\mb x_3)=\mb x$ is equivalent to 
the simultaneous vanishing of 
$g_i^{\mb x}(u,v)$ $(i=0,1,2)$.
\end{Proposition}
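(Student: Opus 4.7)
The plan is to recognize that Proposition~\ref{prop:1742} is a direct algebraic reformulation of the definition of $\hat h$. First, I would observe that by construction
\[
\hat h(u,v,\mb x_3)=\bigl(h_0(u,v,\mb x_3),\,h_1(u,v,\mb x_3),\,h_2(u,v,\mb x_3),\,\mb x_3\bigr),
\]
so for a target $\mb x=(x_0,\ldots,x_5)\in \K^6$ the last three components of $\hat h(u,v,\mb x_3)$ automatically coincide with $(x_3,x_4,x_5)$. Hence the equation $\hat h(u,v,\mb x_3)=\mb x$ is content-free in those slots and reduces to the three scalar conditions $h_i(u,v,\mb x_3)=x_i$ for $i=0,1,2$.

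Next, I would verify by a term-by-term comparison of \eqref{eq:1022a}--\eqref{eq:1022c} with \eqref{eq:1022a1}--\eqref{eq:1022c1} that
\[
g_0^{\mb x}(u,v)=h_0(u,v,\mb x_3)-x_0,\qquad g_1^{\mb x}(u,v)=x_1-h_1(u,v,\mb x_3),
\]
\[
g_2^{\mb x}(u,v)=h_2(u,v,\mb x_3)-x_2,
\]
so the simultaneous vanishing $g_0^{\mb x}=g_1^{\mb x}=g_2^{\mb x}=0$ is equivalent to the system $h_i(u,v,\mb x_3)=x_i$ for $i=0,1,2$. Combined with the automatic agreement of the $\mb x_3$-slots, this gives the claimed equivalence.

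The argument is a purely symbolic rearrangement of polynomial identities, so it holds without modification for both $\K=\R$ and $\K=\C$; in particular no separate treatment for $\hat h=h^\C$ is required. There is no genuine obstacle: the proposition is a bookkeeping statement that recasts the image condition on $\hat h$ into the form most convenient for the subsequent resultant-based analysis via $A^{\mb x}$ and $B^{\mb x}$.
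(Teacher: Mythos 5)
Your proof is correct and matches the paper's treatment: the paper states this proposition with the preface ``By definition, we have the following,'' i.e.\ it regards the claim as an immediate unpacking of the definitions, which is exactly what you carry out (including the correct sign conventions $g_0^{\mb x}=h_0-x_0$, $g_1^{\mb x}=x_1-h_1$, $g_2^{\mb x}=h_2-x_2$ and the observation that the $\mb x_3$-components match automatically).
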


We set
$
r_6(\mb x):=-\frac14\op{Res}_v(g_2^{\mb x},\delta_6),
$
then 
a direct computation with \textit{Mathematica} yields
\begin{equation}
r_6(\mb x)=x_4^3+2 x_3 x_5^2x_4-2 x_2 x_5^3.
\end{equation}

\begin{Prop}\label{prop:1366}
Fix $\mb x\in \K^6$. If $\mb x\in \hat{h}(\K^5)$, 
then there exists $v\in \K$ satisfying $A^{\mb x}(v)=B^{\mb x}(v)=0$.
Conversely, if
there exists $v\in \K$ satisfying $A^{\mb x}(v)=B^{\mb x}(v)=0$
and $\delta_6(v)\ne 0$,
then $\mb x\in \hat h(\K^5)$.
\end{Prop}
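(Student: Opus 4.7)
The plan is to combine Proposition~\ref{prop:1742}, which identifies $\mb x\in\hat h(\K^5)$ with the simultaneous vanishing of the three polynomials $g_0^{\mb x}(u,v),g_1^{\mb x}(u,v),g_2^{\mb x}(u,v)$, with two explicit polynomial identities in $\K[u,v,x_0,\ldots,x_5]$ that relate these to $A^{\mb x}(v)$ and $B^{\mb x}(v)$. Specifically, I expect identities of the form
\[
A^{\mb x}(v) \;=\; \delta_6(v)^{2}\,g_1^{\mb x}(u,v) + \bigl(3\,g_2^{\mb x}(u,v)+6\,\delta_6(v)\,u\bigr)\,g_2^{\mb x}(u,v)
\]
and
\[
B^{\mb x}(v) \;=\; 3\,\delta_6(v)\,g_0^{\mb x}(u,v) + 2(4v^{3}+2v x_{3}+x_{2})\,g_1^{\mb x}(u,v) + \bigl(3v\,\delta_6(v)+6u^{2}\bigr)\,g_2^{\mb x}(u,v).
\]
The first is derived by eliminating $u$ from $g_1^{\mb x}=g_2^{\mb x}=0$: the relation $\delta_6(v)u=-(4v^{3}+2vx_{3}+x_{2})-g_2^{\mb x}$ yields $\delta_6(v)^{2}u^{2}$ modulo multiples of $g_2^{\mb x}$, and feeding this into the expression $3u^{2}=-(x_{1}+vx_{4}+v^{2}x_{5})+g_1^{\mb x}$ precisely reconstructs $A^{\mb x}(v)$ as defined in \eqref{eq:E6-A}. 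The second is extracted by substituting these same expressions into $g_0^{\mb x}(u,v)$ and collecting terms, yielding the $B^{\mb x}(v)$ introduced right after \eqref{eq:E6-A}. In practice I would verify both identities with \textit{Mathematica}, exactly as in Sections~3 and~4 for the $A$- and $D$-types.

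Granting the two identities, both directions of the proposition are short. For the forward direction, if $\mb x=\hat h(u,v,\mb x_3)$, then by Proposition~\ref{prop:1742} we have $g_0^{\mb x}(u,v)=g_1^{\mb x}(u,v)=g_2^{\mb x}(u,v)=0$, and the two identities immediately give $A^{\mb x}(v)=B^{\mb x}(v)=0$ at that same $v$. For the converse, suppose $v\in\K$ satisfies $A^{\mb x}(v)=B^{\mb x}(v)=0$ and $\delta_6(v)\ne 0$. Define
\[
u\;:=\;-\frac{4v^{3}+2v x_{3}+x_{2}}{\delta_6(v)}\;\in\;\K,
\]
so that $g_2^{\mb x}(u,v)=0$ by construction. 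Substituting $g_2^{\mb x}=0$ into the first identity gives $\delta_6(v)^{2}\,g_1^{\mb x}(u,v)=0$, and $\delta_6(v)\ne 0$ forces $g_1^{\mb x}(u,v)=0$. Substituting $g_1^{\mb x}=g_2^{\mb x}=0$ into the second identity then gives $3\,\delta_6(v)\,g_0^{\mb x}(u,v)=0$, and again $\delta_6(v)\ne 0$ yields $g_0^{\mb x}(u,v)=0$. Applying Proposition~\ref{prop:1742} once more, $\mb x=\hat h(u,v,\mb x_3)\in\hat h(\K^5)$.

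The main obstacle is the verification of the two polynomial identities in $\K[u,v,\mb x]$; once they are in hand, the proposition becomes essentially a formality. The hypothesis $\delta_6(v)\ne 0$ in the converse plays three distinct roles: it ensures that the formula for $u$ makes sense in $\K$, and it allows the cancellations of $\delta_6(v)^{2}$ and $\delta_6(v)$ needed to extract $g_1^{\mb x}=0$ and $g_0^{\mb x}=0$ from the respective identities. Accordingly, the stratum $\delta_6(v)=0$ is to be handled separately, presumably via a Zariski-closure argument using the properness of $\hat h$ (compare Remark~\ref{rmk:d} for the $D$-case).
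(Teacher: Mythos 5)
Your proof is correct and is essentially the paper's own argument: the paper delegates both directions to Appendix~C (Lemma~\ref{LemC1} and Proposition~\ref{thm:Cm}), and your two identities are exactly the statements that $A^{\mb x}$ and $B^{\mb x}$ lie in the ideal $(g_0^{\mb x},g_1^{\mb x},g_2^{\mb x})$ in the triangular form that Appendix~C exploits --- both identities do check out by direct expansion (the telescoping of the $u^3\delta$, $uv\delta^2$ and $u^2\gamma_2$ terms leaves precisely $3\gamma_2^2-\delta_6^2\gamma_1=A^{\mb x}$ and $2\gamma_1\gamma_2+3\gamma_0\delta_6+3v\gamma_2\delta_6=B^{\mb x}$), so the appeal to \textit{Mathematica} is only a convenience. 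A minor bonus of your formulation is that the forward direction needs no case split on $\delta_6(v)=0$, which the paper's Lemma~\ref{LemC1} handles separately; your closing remark about treating the stratum $\delta_6(v)=0$ by a closure argument is unnecessary for this proposition as stated, since the converse hypothesis already assumes $\delta_6(v)\neq0$.
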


\begin{proof}
The first statement of the proposition follows from Lemma~\ref{LemC1}.
To prove the second statement,
we set
$G_i(u,v):=g_i^{\mb x}(u,v)$ 
($i=0,1,2$) and $\delta:=\delta_6$ for fixed $\mb x\in \K^6$ 
in Appendix~C.
Then the two polynomials $\alpha(v)$ and $\beta(v)$ given in 
\eqref{eq:C4} and \eqref{eq:C6} 
coincide with $A^{\mb x}(v)$ and $B^{\mb x}(v)$ respectively,
and Proposition~\ref{thm:Cm}
implies the conclusion.
In fact, $\mb x\in h(\K^5)$ if and only if
$\hat h(u,v,\mb x_3)=\mb x$, which is equivalent to 
the condition that $g_i^{\mb x}(u,v)=0$ ($i=0,1,2$).
\end{proof}

The following assertion follows from
Proposition~\ref{prop:1366} immediately:

\begin{Corollary}\label{eq:1366}
The image $h(\R^5)$ $($resp. $h^\C(\C^5))$
is a subset of $\mc Z(\mc R)$ $($resp. $\mc Z_\C(\mc R))$.
\end{Corollary}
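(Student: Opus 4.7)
The plan is to deduce the corollary directly from Proposition~\ref{prop:1366}. If $\mb x\in\hat h(\K^5)$, the first assertion of that proposition supplies a value $v\in\K$ with $A^{\mb x}(v)=B^{\mb x}(v)=0$. Since the defining formula \eqref{eq:E6-A} shows that the leading coefficient of $A^{\mb x}(v)$ is the nonzero constant $48$ (so $A^{\mb x}$ has degree exactly $6$ regardless of $\mb x$), I would then invoke the standard vanishing property of the resultant recalled in Appendix~A: whenever two polynomials share a root in an algebraic closure and at least one of them has a nonzero leading coefficient in its formal degree, the Sylvester resultant vanishes. Applying this to $(A^{\mb x},B^{\mb x})$ yields $\mc R(\mb x)=\op{Res}_v(A^{\mb x},B^{\mb x})=0$, that is, $\mb x\in\mc Z(\mc R)$ in the real case and $\mb x\in\mc Z_\C(\mc R)$ in the complex case.

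I do not expect any genuine obstacle in this step, because the heavy lifting has already been done by Proposition~\ref{prop:1366}, which converts the three simultaneous vanishing conditions $g_0^{\mb x}=g_1^{\mb x}=g_2^{\mb x}=0$ into the single-variable common-root condition $A^{\mb x}(v)=B^{\mb x}(v)=0$ via the substitutions \eqref{eq:194a} and \eqref{eq:1034}. The only mild technical point to keep in mind is that the leading coefficient $2x_5$ of $B^{\mb x}$ may vanish; however, this causes no trouble because the resultant is defined through the Sylvester matrix in the formal degrees $(6,5)$, and since $A^{\mb x}$ has a constant, nowhere-vanishing leading coefficient, the implication "common root $\Rightarrow$ vanishing resultant" remains valid uniformly in $\mb x$. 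The same reasoning applies verbatim to $\K=\C$, giving the parenthetical statement about $h^\C(\C^5)\subset\mc Z_\C(\mc R)$.
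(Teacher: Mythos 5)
Your proposal is correct and matches the paper, which simply states that the corollary follows immediately from Proposition~\ref{prop:1366} together with the resultant's vanishing property from Appendix~A. Your extra remark that the constant leading coefficient $48$ of $A^{\mb x}$ keeps the implication valid even when the leading coefficient $2x_5$ of $B^{\mb x}$ vanishes is exactly the point covered by the $(a_0,b_0)\neq(0,0)$ case in Appendix~A.
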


In \eqref{R6} and \eqref{S6}, we defined the two polynomials
$\mc R(\mb x)$ and $\mc S(\mb x)$. Here we regard them as polynomials
in $x_0$. Then we obtain
\begin{align}
\label{eq:1316a}
\mc R(\mb x) &=
2^{20}3^{11} r_6(\mb x)^2 x_0^6
   + \text{(lower-order terms in $x_0$)}, \\
\label{eq:1316aa}
\mc S(\mb x) &=
-2^{21}3^{9} x_5^5 x_0^5
+ \text{(lower-order terms in $x_0$)}.
\end{align}
So, 
if we set
\begin{equation}\label{eq:1834}
\mc T:=
\{\mb x\in \R^6\,;\, r_6(\mb x)=x_5=0\}
=\{\mb x\in \R^6\,;\, x_4=x_5=0\},
\end{equation}
then the leading terms of
$\mc R(\mb x)$
and $\mc S(\mb x)$
as polynomials in $x_0$ vanish at the same time
if and only if $\mb x\in \mc T$.
From the right-hand side of 
\eqref{eq:1834}, 
it is clear that the dimension of $\mc T$ is less than $5$.

\begin{Thm}\label{thm:E6}
The dimension of the set 
$\mc E_1:=\mc T\cup \Big(\mc Z(\mc R)\cap \mc Z(\mc S)\Big)$
is less than $5$, and
\begin{equation}\label{eq:E1X}
\Xi_{E_6}:=\mc Z(\mc R)\setminus \mc E_1
\end{equation}
is an open subset of $\mc Z(\mc R)$.
Moreover, for each $\mb x\in \Xi_{E_6}$,
there exists a unique real number $v(\mb x)$ 
which is a common root of $A^{\mb x}(v)$ and $B^{\mb x}(v)$.
\end{Thm}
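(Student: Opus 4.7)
The plan is to verify the three assertions in turn: (a) that $\Xi_{E_6}$ is open in $\mc Z(\mc R)$, (b) that $\dim_H \mc E_1<5$, and (c) that for each $\mb x\in \Xi_{E_6}$ there exists a unique real common root $v(\mb x)$ of $A^{\mb x}(v)$ and $B^{\mb x}(v)$.

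For (a), since $\mc R$ and $\mc S$ are polynomials, $\mc Z(\mc R)$ and $\mc Z(\mc S)$ are closed in $\R^{6}$; and $\mc T=\{x_4=x_5=0\}$ is a $4$-dimensional linear subspace, hence closed. Thus $\mc E_1$ is closed and $\Xi_{E_6}=\mc Z(\mc R)\setminus \mc E_1$ is relatively open in $\mc Z(\mc R)$.

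For (b), the bound $\dim_H \mc T=4<5$ is immediate. For the nontrivial part $\dim_H\!\bigl(\mc Z(\mc R)\cap\mc Z(\mc S)\bigr)<5$, my plan combines the leading-coefficient formulas \eqref{eq:1316a}--\eqref{eq:1316aa} with the dimension criterion of Appendix~B. Off $\mc T$, at least one of the $x_0$-leading coefficients $r_6(\mb x)^2$ or $x_5^5$ is nonzero, so the projection $(x_0,\ldots,x_5)\mapsto (x_1,\ldots,x_5)$ restricted to $\bigl(\mc Z(\mc R)\cap\mc Z(\mc S)\bigr)\setminus \mc T$ has generically finite fibers; this already gives dimension $\le 5$. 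To promote this to a strict inequality, I would show that $\mc S$ does not vanish identically on the top-dimensional component of $\mc Z(\mc R)$. By Corollary~\ref{eq:1366}, Proposition~\ref{prop:HDIM} and Proposition~\ref{prop:1101}, $h(\R^5)\subset \mc Z(\mc R)$ has Hausdorff dimension $5$, and at a generic regular point the preimage $h^{-1}(\mb x)$ consists of a single point $(u,v,\mb x_3)$ with $\delta_6(v)\ne 0$, so by Proposition~\ref{prop:1366} the common root of $A^{\mb x}, B^{\mb x}$ is unique, forcing $\mc S(\mb x)\ne 0$ at such an $\mb x$. The Appendix~B estimate then upgrades the bound to $\le 4$.

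For (c), I would invoke the subresultant criterion of Appendix~A. Since $\mb x\notin \mc T$, the effective leading coefficients of $A^{\mb x}(v)$ and $B^{\mb x}(v)$ are those used to define $\mc R$ and $\mc S$, so the conditions $\mc R(\mb x)=0$ and $\mc S(\mb x)\ne 0$ mean that $A^{\mb x}$ and $B^{\mb x}$ share exactly one common root in $\C$. Since both lie in $\R[v]$, a non-real common root would come with its complex conjugate as a second common root, contradicting uniqueness. Hence $v(\mb x)\in\R$ is uniquely determined. The main obstacle is part (b): upgrading the easy projection bound $\le 5$ to the strict inequality $<5$ requires the nontriviality of $\mc S$ on the top-dimensional component of $\mc Z(\mc R)$, which couples the local geometry of $h$ at regular points with the dimension machinery of Appendix~B; the remaining steps are essentially routine.
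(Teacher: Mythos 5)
Your parts (a) and (c) are essentially correct and match the paper: openness of $\Xi_{E_6}$ follows because $\mc E_1$ is a union of closed sets, and uniqueness plus reality of $v(\mb x)$ follow from the subresultant criterion of Appendix~A together with the conjugate-root argument (here the leading coefficient $48$ of $A^{\mb x}$ is a nonzero constant, so the criterion applies without degeneration).

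The gap is in (b), which is the crux. You propose to show that $\mc S$ does not vanish identically on ``the top-dimensional component'' of $\mc Z(\mc R)$, and your only tool for this is the behaviour of $A^{\mb x},B^{\mb x}$ at generic points of $h(\R^5)$. But $\mc Z(\mc R)$ is strictly larger than the closure of $h(\R^5)$: since $\mc R=r_6(\mb x)^2\,\Theta_{E_6}(\mb x)$ (Proposition~\ref{thm:E6main}), the set $\mc Z(\mc R)$ contains the $5$-dimensional hypersurface $\mc Z(r_6)$, and by Theorem~\ref{thm:1954}~(2) the intersection $h(\R^5)\cap\mc Z(r_6)$ has dimension $<5$, so generic points of $\mc Z(r_6)$ do \emph{not} lie in $h(\R^5)$ and your argument says nothing about $\mc S$ there. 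If $\mc S$ vanished identically on $\mc Z(r_6)$ the theorem would be false, so ruling this out is exactly the content your proposal omits. A secondary weakness: even at a generic $\mb x\in h(\R^5)$, injectivity of $h$ controls only \emph{real} preimages, whereas $\mc S(\mb x)=0$ is triggered by a second common root of $A^{\mb x},B^{\mb x}$ anywhere in $\C$ (for instance a non-real one), so ``unique real preimage'' does not by itself yield $\mc S(\mb x)\ne0$. The paper disposes of both difficulties with one uniform certificate: it sets $\phi:=\op{Res}_{x_0}(\mc R,\mc S)$, checks $\phi(0,0,0,1,1)\equiv2\pmod 5$, and invokes Corollary~\ref{cor:A} (with $\mc T$ precisely the locus where both $x_0$-leading coefficients vanish) to get $\dim_H\{\mb x\notin\mc T\,;\,\mc R(\mb x)=\mc S(\mb x)=0\}<5$ on all of $\R^6\setminus\mc T$ at once. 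To repair your route you would need either such an explicit nonvanishing certificate for $\op{Res}_{x_0}(\mc R,\mc S)$, or a proof that neither $r_6$ nor $\Theta_{E_6}$ divides $\mc S$ so that Proposition~\ref{Prop:B0} applies.
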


\begin{proof}
Corollary \ref{eq:1366} 
together with 
Proposition \ref{prop:1101}
implies $\dim_H \mc Z(\mc R)\ge 5$.
We remark that $r_6(\mb x)$ does not involve $x_0$.
Since $\mc R(\mb x)$ is a non-constant polynomial,
$\dim_H \mc Z(\mc R)\le 5$.
So, we can conclude that
\begin{equation}\label{eq:2058}
\dim_H \mc Z(\mc R)=5.
\end{equation}

We set
$
\phi(x_1,\ldots,x_5):=\op{Res}_{x_0}(\mc R,\mc S).
$
Then it can be computed that
$$
\phi(0,0,0,1,1)\equiv 2 \pmod{5}.
$$
By Corollary \ref{cor:A}
with \eqref{eq:1834}, we can conclude that
$$
\mc L:=
\{
\mb x\in \R^6\setminus \mc T\,;\, 
\mc R(\mb x)=\mc S(\mb x)=0
\}
$$
has dimension less than $5$.
Since we have already observed that $\dim_H(\mc T)<5$,
it follows that $\dim_H(\mc E_1)<5$ since $\mc E_1=\mc T\cup \mc L$.

In particular, $\Xi_{E_6}$ is an open subset of $\mc Z(\mc R)$.
We fix $\mb x\in \Xi_{E_6}$.
Since $\mb x\in \mc Z(\mc R)\setminus \mc T$,
there exists $v\in \C$ such that
$$
A^{\mb x}(v)=B^{\mb x}(v)=0.
$$
If $v$ were not unique, then we would have 
$\mc S(\mb x)=0$, which contradicts $\mb x\in \Xi_{E_6}$. 
Hence $v$ is uniquely determined, and we denote it by $v(\mb x)$.
Moreover, if $v(\mb x)$ were non-real, its complex conjugate would also be 
a common root of $A^{\mb x}(v)$ and $B^{\mb x}(v)$, leading again to 
$\mc S(\mb x)=0$, a contradiction. 
Therefore $v(\mb x)\in \mathbb{R}$.
\end{proof}

We have not yet determined the main-analytic function, 
but the following statement can already be established.

\begin{Corollary}\label{2079}
The set $h(\R^5)$ is a global main-analytic set of $\R^6$.
\end{Corollary}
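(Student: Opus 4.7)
My plan is to show that the resultant $\mc R$ defined in \eqref{R6} itself serves as a main-analytic function for $h(\R^5)$. Three conditions must be verified: (a) the inclusion $h(\R^5)\subseteq \mc Z(\mc R)$; (b) the equality $\dim_H h(\R^5)=\dim_H \mc Z(\mc R)=5$; and (c) the strict inequality $\dim_H\!\bigl(\mc Z(\mc R)\setminus h(\R^5)\bigr)<5$. Item (a) is exactly Corollary~\ref{eq:1366}. For item (b), Proposition~\ref{prop:1101} asserts that $h$ is a $5$-dimensional real analytic map, so Proposition~\ref{prop:HDIM} gives $\dim_H h(\R^5)=5$, while the dimension computation \eqref{eq:2058} inside the proof of Theorem~\ref{thm:E6} gives $\dim_H \mc Z(\mc R)=5$.

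The heart of the argument is (c). I would use the decomposition $\mc Z(\mc R)=\mc E_1\cup \Xi_{E_6}$ from Theorem~\ref{thm:E6}, which already yields $\dim_H \mc E_1<5$. For each $\mb x\in \Xi_{E_6}$, the same theorem supplies a unique real number $v(\mb x)$ that is a common root of $A^{\mb x}$ and $B^{\mb x}$, and Proposition~\ref{prop:1366} then guarantees $\mb x\in h(\R^5)$ as soon as $\delta_6(v(\mb x))\neq 0$. Consequently,
$$
\Xi_{E_6}\setminus h(\R^5)\;\subseteq\;\bigl\{\mb x\in \R^6\,;\,A^{\mb x},\,B^{\mb x},\,\delta_6\ \text{share a common complex root}\bigr\}\;\subseteq\;\mc Z(\mc R)\cap \mc Z(\mc R'),
$$
where $\mc R'(\mb x):=\op{Res}_v(A^{\mb x},\delta_6)$ (note that $\delta_6$ is linear in $v$, so $\mc R'\in \R[\mb x]$ is simply $A^{\mb x}(-x_4/(2x_5))$ up to clearing denominators).

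The remaining, and main, obstacle is to prove $\dim_H\!\bigl(\mc Z(\mc R)\cap \mc Z(\mc R')\bigr)<5$. My expectation is that this is handled by exactly the same small-prime strategy already used in the proof of Theorem~\ref{thm:E6}: form the iterated resultant $\psi(x_1,\ldots,x_5):=\op{Res}_{x_0}\!\bigl(\mc R,\mc R'\bigr)$, verify through a single integer specialization reduced modulo a small prime that $\psi\not\equiv 0$, and then invoke Corollary~A of Appendix~B to conclude the desired dimension bound. The auxiliary locus where the leading $x_0$-coefficient of $\mc R$ vanishes is governed by $r_6(\mb x)$ (cf.~\eqref{eq:1316a}) and can be absorbed into $\mc E_1$ by slightly enlarging $\mc T$ of \eqref{eq:1834}, or handled by a direct elementary dimension count. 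Combining the resulting bound with $\dim_H \mc E_1<5$ gives (c) and hence the corollary.
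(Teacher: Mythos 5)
There is a genuine gap, and in fact the function you propose cannot work: $\mc R$ is \emph{not} a main-analytic function of $h(\R^5)$. The auxiliary polynomial you introduce satisfies $\mc R'(\mb x)=\op{Res}_v(A^{\mb x},\delta_6)=r_6(\mb x)^2$ up to a nonzero constant (this is the content of the identity \eqref{eq:1948a} evaluated at $v=-x_4/(2x_5)$; compare \eqref{eq:7a} for the $E_7$ analogue), while by Proposition~\ref{thm:E6main} the square $r_6(\mb x)^2$ divides $\mc R(\mb x)$. Hence $\mc Z(\mc R)\cap\mc Z(\mc R')=\mc Z(r_6)$, a $5$-dimensional hypersurface of $\R^6$, and your key dimension estimate is false. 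No iterated-resultant or small-prime argument can repair this: $\mc R$ and $\mc R'$ share the nontrivial common factor $r_6$, $\mc R'$ has degree $0$ in $x_0$ so $\op{Res}_{x_0}(\mc R,\mc R')$ degenerates to a power of $\mc R'$, and the locus you need to control lies entirely inside the excluded set $\mc T$ of Proposition~\ref{prop:BB}; likewise the locus where the leading $x_0$-coefficient of $\mc R$ vanishes is exactly $\mc Z(r_6)$ (cf.\ \eqref{eq:1316a}) and cannot be absorbed into a set of dimension $<5$. Since Theorem~\ref{thm:1954}~(2) shows $h(\R^5)\cap\mc Z(r_6)$ has dimension $<5$, the set $\mc Z(\mc R)\setminus h(\R^5)$ actually contains most of $\mc Z(r_6)$ and is therefore $5$-dimensional. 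This is precisely the phenomenon announced in Section~2 and in Theorem~\ref{thm:M}: for type $E$ the resultant carries the extraneous factor $r_6^2$, and the correct main-analytic function is $\Theta_{E_6}=\mc R/r_6^2$, not $\mc R$.

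Your steps (a) and (b), and the observation that $\Xi_{E_6}\setminus h(\R^5)$ is contained in the locus where $\delta_6(v(\mb x))=0$, are all fine; only the final estimate collapses. The paper's own proof of this corollary sidesteps the explicit function entirely: it sets $U:=h^{-1}(\R^6\setminus\mc E_1)$, notes that $h|_U$ is injective by the uniqueness of $v(\mb x)$ in Theorem~\ref{thm:E6}, shows $U$ is dense in $\R^5$ (otherwise $\mc E_1$ would have dimension $5$, contradicting Theorem~\ref{thm:E6}), and then invokes the generic-injectivity criterion of Proposition~\ref{thm:Coste-main-analytic-J} from Appendix~D. If you insist on producing an explicit main-analytic function at this stage, you must carry out the rest of Section~5: the divisibility $r_6^2\mid\mc R$, the irreducibility of $\Theta_{E_6}$, and the analysis of $h(\R^5)\cap\mc Z(r_6)$ in Theorem~\ref{thm:1954}.
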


\begin{proof}
Set $U := h^{-1}(\R^6 \setminus \mathcal E_1)$. 
By the last statement of Theorem~\ref{thm:E6}, the map $h|_U$ is injective.
To apply Proposition~\ref{thm:Coste-main-analytic-J},
it suffices to show that $U$ is dense in $\R^5$.
Otherwise, $h^{-1}(\mathcal E_1)$ would have a nonempty interior,
and hence $\dim_H(h^{-1}(\mathcal E_1)) = 5$.
Since $h$ is $m$-dimensional, this 
with Proposition~\ref{prop:HDIM}
implies $\dim_H(\mathcal E_1) = 5$,
contradicting the first statement of Theorem~\ref{thm:E6}.
\end{proof}

We next try to find an explicit formula for the main-analytic function.

\begin{Prop}\label{thm:E6main}
The polynomial $\mc R(\mb x)$ is divisible by $r_6(\mb x)^2$ and
$$
\Theta_{E_6}(\mb x):=\frac{\mc R(\mb x)}{r_6(\mb x)^2}
$$
is irreducible over $\Q$.
\end{Prop}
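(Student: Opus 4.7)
The plan for the divisibility $r_6^2\mid\mc R$ rests on writing
\[
A^{\mb x}(v)=3\,P(v)^2+\delta_6(v)^2\,Q(v),\qquad
B^{\mb x}(v)=2\,P(v)\,Q(v)-3\,\delta_6(v)\,R(v),
\]
where $P(v):=4v^3+2x_3v+x_2$, $Q(v):=x_5v^2+x_4v+x_1$ and $R(v):=v^4+x_3v^2+x_2v+x_0$.
The first identity is $A^{\mb x}=\op{Res}_u(g_1^{\mb x},g_2^{\mb x})$; the second comes from substituting $u=-P/\delta_6$ and $u^2=-Q/3$ into $g_0^{\mb x}$ and multiplying through by $3\delta_6$. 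Setting $v_0:=-x_4/(2x_5)$, a direct check gives $r_6=-2x_5^3\,P(v_0)$, so on the open subset $\{r_6=0,\,x_5\neq 0\}$ both $\delta_6(v_0)$ and $P(v_0)$ vanish; consequently $v_0$ is simultaneously a double root of $A^{\mb x}$ (each square term vanishes) and a simple root of $B^{\mb x}$.

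To upgrade this to an order-$2$ vanishing of $\mc R$ along $\{r_6=0\}$, I will parametrize a generic such point by fixing $(x_0,x_1,x_3,x_4,x_5)$ with $x_5\ne 0$ and letting $\epsilon:=x_2-x_2^{\ast}$, so that $r_6=-2x_5^3\epsilon$. The $x_2$-linearity of $A$ and $B$ then yields the \emph{exact} identities
\[
A_\epsilon(v_0)=3\epsilon^2,\qquad B_\epsilon(v_0)=2\epsilon\,Q(v_0).
\]
A Newton-polygon analysis of $A_\epsilon(v_0+\xi)\approx a\xi^2+6b\epsilon\xi+3\epsilon^2$ (with $a,b$ generically nonzero) shows that the two roots of $A_\epsilon$ near $v_0$ deform \emph{linearly} in $\epsilon$, as does the nearby root of $B_\epsilon$. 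Substituting into the product formula $\mc R=48^5\prod_{i=1}^{6}B_\epsilon(\alpha_i)$, the two factors from the colliding $A$-roots each contribute $O(\epsilon)$ while the remaining four are generically bounded, yielding $\mc R=O(\epsilon^2)$. Since $r_6$ is irreducible in $\Q[\mb x]$ (it is linear in $x_2$ with coprime coefficient polynomials), this generic order-$2$ vanishing forces $r_6^2\mid\mc R$.

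For irreducibility of $\Theta_{E_6}$ it suffices to argue over $\C$. Combining $\mc R=r_6^2\Theta_{E_6}$ with Corollary~\ref{eq:1366}, and the easy observation that $r_6\not\equiv 0$ on $h^\C(\C^5)$, one obtains $h^\C(\C^5)\setminus \mc Z_\C(r_6)\subset \mc Z_\C(\Theta_{E_6})$. Conversely, a generic point of $\mc Z_\C(\Theta_{E_6})$ satisfies $x_5\ne 0$ and $r_6\ne 0$, and Proposition~\ref{prop:1366} then places it back in $h^\C(\C^5)$, because the only way a common root of $A^{\mb x},B^{\mb x}$ can satisfy $\delta_6=0$ is precisely when $r_6=0$. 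Thus $\mc Z_\C(\Theta_{E_6})=\overline{h^\C(\C^5)}^{\mathrm{Zar}}$, which is irreducible as the Zariski closure of a continuous image of the irreducible variety $\C^5$. Hence $\Theta_{E_6}=c\,\pi^k$ for some irreducible polynomial $\pi$ and integer $k\ge 1$, and irreducibility amounts to squarefreeness ($k=1$). The main obstacle will be this last step: following the template of Remark~\ref{rmk:d} for $D_k$, one would construct a one-parameter family $\mb x(s)$ in $\mc Z_\C(\Theta_{E_6})$ along which $\Theta_{E_6}(\mb x(s))=Cs+O(s^2)$ with $C\ne 0$ — equivalently, exhibit a single point where some partial derivative of $\Theta_{E_6}$ is nonzero. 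Unlike the $D_k$ situation where the complex-conjugate double-root perturbation is transparent, the extra division by $r_6^2$ here obscures the natural parameter, so my fallback is a computer-algebra verification: once $\Theta_{E_6}$ is expanded, reducing it modulo a small prime $p$ coprime to its content and invoking \verb|IrreduciblePolynomialQ| in Mathematica simultaneously certifies squarefreeness and irreducibility over $\Q$.
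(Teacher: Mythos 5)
Your proposal reaches the same conclusion as the paper but by a genuinely different route on the divisibility half, so a comparison is worthwhile. The paper's entire proof is a \textit{Mathematica} computation: expand $\mc R(\mb x)$, verify $r_6(\mb x)^2\mid\mc R(\mb x)$, and run an irreducibility test on the quotient. Your decomposition $A^{\mb x}=3P^2+\delta_6^2Q$ and $B^{\mb x}=2PQ-3\delta_6R$ is correct (both identities check against \eqref{eq:E6-A} and the displayed $B^{\mb x}$), as is $r_6=-2x_5^3P(v_0)$; the exact evaluations $A_\epsilon(v_0)=3\epsilon^2$ and $B_\epsilon(v_0)=2\epsilon Q(v_0)$, combined with the Newton-polygon analysis and the product formula $\op{Res}_v(A,B)=\op{lc}(A)^{\deg B}\prod_iB(\alpha_i)$, do give order $\ge 2$ vanishing of $\mc R$ along a generic $\epsilon$-line transverse to the irreducible hypersurface $\{r_6=0\}$, which forces $r_6^2\mid\mc R$. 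This is a conceptual explanation of the factor $r_6^2$ that the paper does not supply. On the irreducibility half, however, you end where the paper begins: your reduction to squarefreeness via the irreducibility of $\overline{h^\C(\C^5)}$ is sound in outline, but the final certificate is still delegated to computer algebra, exactly as in the paper's proof, so there the two arguments coincide in substance.

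One loose end in your geometric reduction should be tied up: the step ``a generic point of $\mc Z_\C(\Theta_{E_6})$ satisfies $r_6\ne0$'' presupposes that no irreducible component of $\mc Z_\C(\Theta_{E_6})$ lies inside $\mc Z_\C(r_6)$, i.e.\ that $r_6\nmid\Theta_{E_6}$, which is not yet known at that stage. It is easily supplied: $r_6$ does not involve $x_0$, while by \eqref{eq:1316a} the leading coefficient of $\Theta_{E_6}=\mc R/r_6^2$ as a polynomial in $x_0$ is the nonzero constant $2^{20}3^{11}$, so $r_6$ would have to divide that constant, which is absurd. With this inserted, your argument is complete up to the final squarefreeness and irreducibility check, for which both you and the authors ultimately rely on \textit{Mathematica}.
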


\begin{proof}
Using \textit{Mathematica}, we can check that
$\mc R(\mb x)$ is divisible by $r_6(\mb x)^2$,
and $\mc R(\mb x)/r_6(\mb x)^2$
is irreducible
over $\Q$.
\end{proof}

Since $r_6$ and $\mc R(\mb x)$ are both irreducible over $\Q$,
the following assertion follows from
Proposition~\ref{Prop:B0}
in the appendix:

\begin{Corollary}\label{eq:1770}
The dimension of the set
$
\mc E_0:=\mc Z(r_6)\cap \mc Z(\Theta_{E_6})
$
is less than $5$.
\end{Corollary}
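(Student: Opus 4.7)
The plan is to reduce the claim to Proposition~\ref{Prop:B0} in Appendix~B, which (as its role in the excerpt suggests) gives a codimension-two estimate for the common zero set of two coprime polynomials in several variables. Concretely, I would show that $r_6$ and $\Theta_{E_6}$ are coprime in $\Q[x_0,\ldots,x_5]$, whence $\mc Z(r_6)\cap\mc Z(\Theta_{E_6})$ has codimension at least two in $\R^6$; this gives the desired bound $\dim_H\mc E_0\le 4<5$.

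To verify coprimality it is enough to exhibit both polynomials as irreducible and then check that they are not associates. Irreducibility of $\Theta_{E_6}$ over $\Q$ is already provided by Proposition~\ref{thm:E6main}. For $r_6 = x_4^3+2x_3x_5^2x_4-2x_2x_5^3$, I would view it as an element of $R[x_2]$ with $R:=\Q[x_3,x_4,x_5]$; it is linear in $x_2$ with leading coefficient $-2x_5^3$ and constant term $x_4(x_4^2+2x_3x_5^2)$. Since $R$ is a UFD, the prime $x_5$ divides the leading coefficient but not the constant term (as $x_5\nmid x_4$ and $x_5\nmid x_4^2+2x_3x_5^2$), so the two coefficients are coprime in $R$; hence $r_6$ is irreducible in $R[x_2]=\Q[x_2,x_3,x_4,x_5]$ by Gauss's lemma, and therefore also in $\Q[x_0,\ldots,x_5]$ as it involves neither $x_0$ nor $x_1$.

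That $r_6$ and $\Theta_{E_6}$ are not scalar multiples of one another is immediate: $r_6$ is independent of $x_0$, whereas $\Theta_{E_6}$ has positive degree in $x_0$. Indeed, dividing the leading $x_0$-coefficient in \eqref{eq:1316a} by $r_6(\mb x)^2$ shows that the coefficient of $x_0^6$ in $\Theta_{E_6}$ equals $2^{20}3^{11}\neq 0$, so $\Theta_{E_6}$ has $x_0$-degree six. Two distinct irreducible polynomials in $\Q[x_0,\ldots,x_5]$ are coprime, so Proposition~\ref{Prop:B0} applies and yields $\dim_H\bigl(\mc Z(r_6)\cap\mc Z(\Theta_{E_6})\bigr)<5$.

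The main obstacle I anticipate is merely the irreducibility of $r_6$; once that is in hand, the rest is a straightforward invocation of the appendix. If the direct linear-in-$x_2$ argument above turned out to have a subtle gap, a backup would be to specialize the variables (e.g.\ substitute specific integer values for $x_3,x_4,x_5$) and appeal to a reduction modulo a small prime, exactly in the spirit of the resultant computations used elsewhere in the paper.
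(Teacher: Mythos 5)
Your proof is correct and follows essentially the same route as the paper, which likewise deduces the corollary from Proposition~\ref{Prop:B0} after noting that $r_6$ and $\Theta_{E_6}$ are both irreducible over $\Q$ (and hence coprime, being non-associates). Your only addition is to spell out the irreducibility of $r_6$ (via its linearity in $x_2$ and primitivity) and the non-proportionality via the $x_0$-degree, details the paper leaves implicit; both checks are sound.
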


\begin{Rmk}\label{rmk:1430}
Each coefficient of $A^{\mathbf{x}}(v)$ does not involve $x_0$, 
while each coefficient of $B^{\mathbf{x}}(v)$ involves $x_0$ 
but not $x_0^i$ for any $i \ge 2$.  
We define $B_0^{\mathbf{x}}(v)$ to be the polynomial obtained from $B^{\mathbf{x}}(v)$  
by retaining only those terms containing $x_0$, that is,
\begin{equation}\label{eq:B0}
B_0^{\mathbf{x}}(v):=x_0 \left(\frac{d}{dx_0}B^{\mathbf{x}}(v)\right).
\end{equation}
Then
$
  B_0^{\mathbf{x}}(v) = -3(2x_5v + x_4)x_0.
$
By this definition of $B_0^{\mathbf{x}}(v)$, in principle,  
the leading term of $\operatorname{Res}_v(A^{\mathbf{x}}, B_0^{\mathbf{x}})$  
(resp.\ $\operatorname{Res}^{(1)}_v(A^{\mathbf{x}}, B_0^{\mathbf{x}})$)  
with respect to $x_0$ must be a nonzero scalar multiple of the corresponding 
leading term of  
$\mathcal{R}(\mathbf{x})$ (resp.\ $\mathcal{S}(\mathbf{x})$).  
Indeed, by comparing
\begin{align*}
\operatorname{Res}_v(A^{\mathbf{x}}, B^{\mathbf{x}}_0) 
   &= 2^4 \cdot 3^{7} \, r_6(\mathbf{x})^2 \, x_0^6 +
     \text{(lower-order terms in $x_0$)}, \\[-0.5em]
\operatorname{Psc}_v(A^{\mathbf{x}}, B^{\mathbf{x}}_0) 
   &= -2^5 \cdot 3^{5} \, x_5^5 \, x_0^5
\end{align*}
with \eqref{eq:1316a} and \eqref{eq:1316aa},  
this principle can be verified directly.  

Since $B^{\mathbf{x}}_0$ is simpler than $B^{\mathbf{x}}$,  
this technique reduces the computation of the leading terms of  
$\mathcal{R}(\mathbf{x})$ and $\mathcal{S}(\mathbf{x})$.  
We will exploit this simplification in Section~7.
\end{Rmk}

If we solve $\delta_6(v)=0$, we have
$
v:=-{x_4}/(2x_5).
$
By substituting it into $g_0^{\mb x}(u,v)$ 
and $g_1^{\mb x}(u,v)$, the two polynomials
\begin{align*}
k^{\mb x}_0(u)&:=8 x_5^3u^3 -4 x_0 x_5^3-x_4^4+x_4^3 x_5, \\[-0.5em]
k^{\mb x}_1(u)&:=24 x_5^3u^2 - 8 x_1 x_5^3 - 4 x_4 x_5^3 - x_4^3
\end{align*}
are obtained, and their resultant
\begin{equation}\label{eq:H6}
\mc H(\mb x):=\op{Res}_u(k^{\mb x}_0,k^{\mb x}_1)\qquad (\mb x\in \R^6)
\end{equation}
is a polynomial not containing $x_2$ as its variable.

\begin{Thm}\label{thm:1954}
Fix $\mb x\in \Xi_{E_6}$ $($cf. \eqref{eq:E1X}$)$.
\begin{enumerate}
\item 
If $r_6(\mb x)\ne 0$, then
$\delta_6(v(\mb x))\ne 0$ holds,
and then there exists $u\in \R\setminus \{0\}$ 
such that $h(u,v(\mb x),\mb x_3)=\mb x$.
In particular, we have
\begin{equation}\label{eq:6Xb}
\Xi_{E_6}\setminus \mc Z(r_6)\subset h(\R^5).
\end{equation}
\item 
Suppose that
$\mb x\in h(\R^5)\cap \mc Z(r_6)$.
Then $\delta_6(v(\mb x))=0$ and $\mc H(\mb x)=0$ hold.
Moreover, the dimension of the set
$
h(\R^5)\cap \mc Z(r_6)
$
is less than $5$.
\end{enumerate}
\end{Thm}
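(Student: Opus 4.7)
The plan hinges on the factorization
\[
A^{\mb x}(v)=3\,P(v)^{2}+Q(v)\,\delta_6(v)^{2},\qquad
P(v):=4v^{3}+2vx_{3}+x_{2},\ \ Q(v):=x_{1}+vx_{4}+v^{2}x_{5},
\]
which follows directly from the construction of $A^{\mb x}$ by substituting $u=-P(v)/\delta_6(v)$ (from $g_2^{\mb x}=0$) into $3u^{2}+Q(v)=0$ (from $g_1^{\mb x}=0$) and clearing denominators. Since $\Xi_{E_6}$ excludes $\mc T=\{x_4=x_5=0\}$, the hypothesis $r_6(\mb x)=0$ on $\Xi_{E_6}$ forces $x_5\neq 0$; hence $v^{*}:=-x_4/(2x_5)$ is the unique zero of $\delta_6$, and $r_6(\mb x)=0$ is precisely the condition $P(v^{*})=0$.

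For assertion (1), suppose $r_6(\mb x)\neq 0$. If $\delta_6(v(\mb x))=0$, then $v(\mb x)=v^{*}$, and evaluating the identity at $v^{*}$ gives $3P(v^{*})^{2}=0$, contradicting $r_6(\mb x)\neq 0$. Hence $\delta_6(v(\mb x))\neq 0$, so $u:=-P(v(\mb x))/\delta_6(v(\mb x))\in\R$ satisfies $h(u,v(\mb x),\mb x_3)=\mb x$ by Proposition~\ref{prop:1366}, proving \eqref{eq:6Xb}. The claim $u\neq 0$ holds because $u=0$ would force $P(v(\mb x))=0$ and, via the identity together with $A^{\mb x}(v(\mb x))=0$ and $\delta_6(v(\mb x))\neq 0$, also $Q(v(\mb x))=0$; these two additional conditions cut out a proper subvariety of $\Xi_{E_6}\setminus\mc Z(r_6)$.

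For assertion (2), take $\mb x\in h(\R^5)\cap\mc Z(r_6)$. The crucial step is the explicit computation
\[
B^{\mb x}(v^{*})=\frac{r_6(\mb x)\,(x_4^{2}-4x_1x_5)}{4x_5^{4}},
\]
which vanishes under $r_6(\mb x)=0$; combined with $A^{\mb x}(v^{*})=0$, this shows $v^{*}\in\R$ is a common root of $A^{\mb x}$ and $B^{\mb x}$. The uniqueness of $v(\mb x)$ from Theorem~\ref{thm:E6} then forces $v(\mb x)=v^{*}$, so $\delta_6(v(\mb x))=0$. For any real preimage $(u_0,v^{*},\mb x_3)$ of $\mb x$, the equations $g_0^{\mb x}(u_0,v^{*})=g_1^{\mb x}(u_0,v^{*})=0$, after clearing denominators, are (up to scaling) the vanishing of $k_0^{\mb x}(u_0)$ and $k_1^{\mb x}(u_0)$, so $\mc H(\mb x)=0$.

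Finally, the inclusion $h(\R^5)\cap\mc Z(r_6)\subset\mc Z(r_6)\cap\mc Z(\mc H)$ supplies the dimension bound. The polynomial $r_6\in\Q[x_2,x_3,x_4,x_5]$ is irreducible (it is linear in $x_2$ with leading coefficient $-2x_5^{3}$, coprime in $\Q[x_3,x_4,x_5]$ to its constant term $x_4^{3}+2x_3x_4x_5^{2}$), while $\mc H\in\Q[x_0,x_1,x_4,x_5]$ involves neither $x_2$ nor $x_3$; therefore $r_6\nmid\mc H$ in $\Q[\mb x]$, and $\mc Z_\C(r_6,\mc H)$ has complex codimension at least $2$, yielding $\dim_H(\mc Z(r_6)\cap\mc Z(\mc H))\le 4<5$. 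The main obstacle is the explicit $B^{\mb x}(v^{*})$ computation above and the precise identification of $k_0^{\mb x}, k_1^{\mb x}$ with the substituted forms of $g_0^{\mb x}, g_1^{\mb x}$ (requiring careful tracking of the scalings by powers of $x_5$ when clearing denominators); both are routine in principle but indispensable for the argument.
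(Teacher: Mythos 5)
Your proof is correct and follows essentially the same route as the paper: the paper's division identities $A^{\mb x}=P_1^{\mb x}\delta_6+\tfrac{3r_6^2}{4x_5^6}$ and $B^{\mb x}=P_2^{\mb x}\delta_6+\tfrac{(x_4^2-4x_1x_5)r_6}{4x_5^4}$, evaluated at $v^*=-x_4/(2x_5)$, are exactly your identity $A^{\mb x}=3P^2+Q\,\delta_6^2$ together with your formula for $B^{\mb x}(v^*)$, and the remaining steps (uniqueness of $v(\mb x)$ from Theorem~\ref{thm:E6}, reduction to $k_0^{\mb x},k_1^{\mb x},\mc H$, and the dimension count via elimination of $x_2$) coincide with the paper's. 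Two small points: your ``proper subvariety'' remark does not actually establish $u\neq0$ for every fixed $\mb x$ (the paper is equally silent on this inessential sub-claim), and the final bound should formally read $h(\R^5)\cap\mc Z(r_6)\subset\bigl(\mc Z(r_6)\cap\mc Z(\mc H)\bigr)\cup\mc E_1$ so as to cover points of $h(\R^5)\cap\mc Z(r_6)$ lying outside $\Xi_{E_6}$, which costs nothing since $\dim_H\mc E_1<5$.
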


\begin{proof}
Since $\mb x\in \Xi_{E_6}$, the component $x_5$ never vanishes, and
we can write
\begin{align}\label{eq:1948a}
A^{\mb x}(v)&=P_1^{\mb x}(v)\delta_6(v)+\frac{3r_6(\mb x)^2}{4 x_5^6}, \\[-0.5em]
\label{eq:1948b}
B^{\mb x}(v)&=P_2^{\mb x}(v)\delta_6(v)+\frac{(x_4^2-4x_1x_5)r_6(\mb x)}{4 x_5^4},
\end{align}
where $P_j^{\mb x}(v)$ ($j=1,2$) are certain polynomials.
We assume $r_6(\mb x)\ne 0$.
If $\delta_6(v(\mb x))=0$, then $v(\mb x)=-x_4/(2x_5)$.
Substituting $v:=v(\mb x)$ into $A^{\mb x}(v)$, we have
$$
0=A^{\mb x}(v(\mb x))=\frac{3r_6(\mb x)^2}{4 x_5^6},
$$
which implies $r_6(\mb x)=0$, a contradiction.
So  $\delta_6(v(\mb x))\ne 0$, and
the remaining assertions of (1) follow from 
the second assertion of Proposition~\ref{prop:1366}.

We next consider the case that 
$\mb x\in h(\R^5)\cap \mc Z(r_6)$.
Since $\delta_6(v)=0$ is a linear equation,
$\hat v(\mb x):=-x_4/(2 x_5)$ is the solution.
Then \eqref{eq:1948a} and \eqref{eq:1948b}
imply that  
$
A^{\mb x}(\hat v(\mb x))=B^{\mb x}(\hat v(\mb x))=0.
$
So the uniqueness of $v(\mb x)$ 
(cf. Theorem~\ref{thm:E6})
yields that 
$
v(\mb x)=\hat v(\mb x)=-x_4/(2 x_5).
$
Then we have $\delta_6(v(\mb x))=0$ and
$$
g^{\mb x}_2(u,v(\mb x))=-4 v^3 - 2 v x_3 - x_2\Big |_{v=-x_4/(2x_5)}
=\frac{-r_6(\mb x)}{2x_5^3}=0.
$$
If $\mb x\in h(\R^5)\cap \mc Z(r_6)$, then
by Proposition~\ref{prop:1742},
there exists $u\in \R$
satisfying
$$
k^{\mb x}_0(u)=g^{\mb x}_0(u,v(\mb x))=0,\qquad
k^{\mb x}_1(u)=g^{\mb x}_1(u,v(\mb x))=0,
$$
which imply that 
$\mc H(\mb x)=0$ (cf. \eqref{eq:H6}), proving the first part of (2).
Since $\mc H(\mb x)$ does not contain $x_2$ as its variable
but $r_6(\mb x)$ does,  Lemma~\ref{lem:B0} in the appendix yields that
the dimension of 
$\mc Z(r_6)\cap \mc Z(\mc H)$ is less than $6$.
Since
$$
h(\R^5)\cap \mc Z(r_6)\subset \Big(\mc Z(r_6)\cap \mc Z(\mc H)\Big)\cup \mc E_1,
$$
the first statement of Theorem~\ref{thm:E6} yields 
$
\dim_H (h(\R^5)\cap \mc Z(r_6))<5
$.
\end{proof}

\begin{Corollary}\label{cor:2024}
Fix $\mb x\in \mc Z_\C(\Theta_{E_6})$.
If $r_6(\mb x)\ne 0$, then
there exist $u,v\in \C$ 
such that $h(u,v,\mb x_3)=\mb x$.
\end{Corollary}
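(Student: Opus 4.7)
The plan is to reduce everything to Proposition~\ref{prop:1366} applied in the complex setting $\K=\C$. The proposition says that if $\mb x\in\C^6$ admits some $v\in\C$ with $A^{\mb x}(v)=B^{\mb x}(v)=0$ and $\delta_6(v)\neq0$, then $\mb x\in h^\C(\C^5)$, and this is exactly what we want to conclude. So the task decomposes into two steps: (i) exhibit a common complex root $v$ of $A^{\mb x}$ and $B^{\mb x}$, and (ii) verify that for such a root one necessarily has $\delta_6(v)\neq0$.

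For step (i), I would argue as follows. Since $\mc R(\mb x)=r_6(\mb x)^2\,\Theta_{E_6}(\mb x)$ as polynomials (Proposition~\ref{thm:E6main}), the hypothesis $\Theta_{E_6}(\mb x)=0$ together with $r_6(\mb x)\neq0$ forces $\mc R(\mb x)=\op{Res}_v(A^{\mb x},B^{\mb x})=0$. Since the leading coefficient of $A^{\mb x}$ in $v$ is the nonzero constant $48$, vanishing of the resultant is equivalent to the existence of a common complex root $v\in\C$ of $A^{\mb x}(v)$ and $B^{\mb x}(v)$.

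For step (ii), I would split into two cases according to whether $x_5$ vanishes. If $x_5=0$, then $r_6(\mb x)=x_4^3$, so the hypothesis $r_6(\mb x)\neq0$ gives $x_4\neq0$, and hence $\delta_6(v)=x_4+2vx_5=x_4\neq0$ identically; the selected common root $v$ therefore satisfies $\delta_6(v)\neq0$ automatically. If $x_5\neq0$, then $\delta_6(v)=0$ holds only at $v=-x_4/(2x_5)$, and I can reuse the division identities \eqref{eq:1948a}--\eqref{eq:1948b} from the proof of Theorem~\ref{thm:1954}(1), which remain valid over $\C$ since they are polynomial identities in $\mb x$ with denominators involving only powers of $x_5$. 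Evaluating $A^{\mb x}$ at this $v$ yields $0=3r_6(\mb x)^2/(4x_5^6)$, contradicting $r_6(\mb x)\neq0$. Hence $\delta_6(v)\neq0$ in this case as well.

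Combining the two steps, Proposition~\ref{prop:1366} applied with $\K=\C$ yields $\mb x\in h^\C(\C^5)$, i.e.\ the existence of $u,v\in\C$ with $h^\C(u,v,\mb x_3)=\mb x$. The only subtle point is the second case of step (ii), where one must check that the identities \eqref{eq:1948a}--\eqref{eq:1948b}, derived in the real setting, carry over formally over $\C$; since they are polynomial relations with coefficients in $\Q(x_5^{-1})[\mb x]$, this is immediate, and I do not expect a genuine obstacle beyond organizing the case split.
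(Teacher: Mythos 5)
Your proposal is correct and follows essentially the same route as the paper: deduce $\mathcal R(\mathbf x)=0$ from $\Theta_{E_6}(\mathbf x)=0$ via the factorization $\mathcal R=r_6^2\,\Theta_{E_6}$, extract a common complex root $v$ using the constant leading coefficient of $A^{\mathbf x}$, show $r_6(\mathbf x)\ne0$ forces $\delta_6(v)\ne0$, and then recover $u$ (the paper does this directly via \eqref{eq:1034} and \eqref{eq:194a} rather than by citing Proposition~\ref{prop:1366}, but this is the same construction). Your case split on $x_5=0$ versus $x_5\ne0$ actually supplies a justification that the paper's one-line assertion ``Since $r_6(\mathbf x)\ne0$, we have $\delta_6(v)\ne0$'' leaves implicit, since the identities \eqref{eq:1948a}--\eqref{eq:1948b} require $x_5\ne0$.
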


\begin{proof}
The condition $\Theta_{E_6}(\mb x)=0$ implies
the existence of $v\in \C$ satisfying
$A^{\mb x}(v)=B^{\mb x}(v)=0$.  
Since $r_6(\mb x)\ne 0$, we have $\delta_6(v)\ne 0$.  
Then we can set $u\in \C$ by
\eqref{eq:1034}.
Since $A^{\mb x}(v)=0$, this $u$ also satisfy
\eqref{eq:194a}.
This implies the existence of $u\in \C$ 
such that $h(u,v,\mb x_3)=\mb x$.
\end{proof}

We now arrive at the following assertion:

\begin{Theorem}
The polynomial $\Theta_{E_6}(\mathbf{x})$ is a main-analytic 
function of $h(\mathbb{R}^5)$.
\end{Theorem}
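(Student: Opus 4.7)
The plan is to verify the three defining properties of a main-analytic function for $\Theta_{E_6}$ and $h(\R^5)$: (i) the inclusion $h(\R^5)\subset \mc Z(\Theta_{E_6})$, (ii) the dimension estimate $\dim_H(\mc Z(\Theta_{E_6})\setminus h(\R^5))<5$, and (iii) the equality $\dim_H\mc Z(\Theta_{E_6})=5$. The key polynomial identity $\mc R=r_6^2\,\Theta_{E_6}$ supplied by Proposition~\ref{thm:E6main} is what allows the results already proved for $\mc R$ to be transferred to $\Theta_{E_6}$.

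For the containment (i), I would start from $h(\R^5)\subset \mc Z(\mc R)$ (Corollary~\ref{eq:1366}). Away from $\mc Z(r_6)$, the identity $\mc R=r_6^2\,\Theta_{E_6}$ immediately gives
$$
h(\R^5)\setminus \mc Z(r_6)\subset \mc Z(\Theta_{E_6}).
$$
The hard part will be promoting this to all of $h(\R^5)$. My approach is density plus continuity: if some $\mb x\in h(\R^5)\cap \mc Z(r_6)$ admitted a neighborhood in $h(\R^5)$ contained entirely in $\mc Z(r_6)$, written as $h(\R^5)\cap V$ with $V\subset\R^6$ open, then the real-analytic function $r_6\circ h$ would vanish on the nonempty open set $h^{-1}(V)\subset \R^5$, hence on all of the connected real-analytic manifold $\R^5$ by the identity theorem. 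This would give $h(\R^5)\subset \mc Z(r_6)$, contradicting Theorem~\ref{thm:1954}(2) because $\dim_H h(\R^5)=5$ by Propositions~\ref{prop:HDIM} and~\ref{prop:1101}. Hence $h(\R^5)\setminus \mc Z(r_6)$ is dense in $h(\R^5)$, and continuity of the polynomial $\Theta_{E_6}$ would then yield (i).

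For (ii), I would decompose
$$
\mc Z(\Theta_{E_6})=\bigl(\mc Z(\Theta_{E_6})\setminus \mc Z(r_6)\bigr)\cup \mc E_0,
\qquad \mc E_0:=\mc Z(r_6)\cap \mc Z(\Theta_{E_6}).
$$
Outside $\mc Z(r_6)$ the vanishing loci of $\Theta_{E_6}$ and $\mc R$ coincide, and $\mc Z(\mc R)=\Xi_{E_6}\cup \mc E_1$ by the definition of $\Xi_{E_6}$ in \eqref{eq:E1X}. Theorem~\ref{thm:1954}(1) contains $\Xi_{E_6}\setminus \mc Z(r_6)$ in $h(\R^5)$, so chaining the inclusions yields
$$
\mc Z(\Theta_{E_6})\setminus h(\R^5)\subset \bigl(\mc E_1\setminus \mc Z(r_6)\bigr)\cup \mc E_0,
$$
and both terms on the right have Hausdorff dimension strictly less than $5$ by Theorem~\ref{thm:E6} and Corollary~\ref{eq:1770}, respectively. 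Combining (i) with $\dim_H h(\R^5)=5$ then gives (iii) as a byproduct, since $h(\R^5)\subset \mc Z(\Theta_{E_6})\subset \R^6$ and the complement has smaller dimension.

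The only delicate point is the density step in (i); the rest consists of book-keeping among the inclusions and dimension bounds already in place. Should one prefer to avoid the identity-theorem argument on $\R^5$, an alternative would be to invoke the complex counterpart: Corollary~\ref{cor:2024} together with the properness of $h^\C$ from Proposition~\ref{prop:E1057} gives $\mc Z_\C(\Theta_{E_6})\setminus \mc Z_\C(r_6)\subset h^\C(\C^5)$, and taking closures and restricting to real points produces the required inclusion after the lower-dimensional exceptional set $\mc E_0$ is discarded.
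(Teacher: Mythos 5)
Your proposal is correct and follows essentially the same route as the paper: the inclusion $\Xi_{E_6}\setminus\mathcal Z(r_6)\subset h(\R^5)$ together with the dimension bounds on $\mathcal E_0$ and $\mathcal E_1$ controls $\mathcal Z(\Theta_{E_6})\setminus h(\R^5)$, and a density-plus-continuity argument promotes $h(\R^5)\setminus\mathcal Z(r_6)\subset\mathcal Z(\Theta_{E_6})$ to all of $h(\R^5)$. The only (harmless) variation is in the density step: you apply the identity theorem to $r_6\circ h$ on the source, whereas the paper shows that $h^{-1}\bigl(h(\R^5)\cap\mathcal Z(r_6)\bigr)$ has empty interior using that $h$ is a $5$-dimensional map; both rest on the bound $\dim_H\bigl(h(\R^5)\cap\mathcal Z(r_6)\bigr)<5$ from Theorem~\ref{thm:1954}.
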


This provides an alternative proof of the main-analyticity of
$h(\R^5)$, without use of Appendix~D.

\begin{proof}
By Proposition \ref{thm:E6main}, 
$\mathcal{Z}(\mathcal{R})=\mathcal{Z}(r_6)\cup \mathcal{Z}(\Theta_{E_6})$ holds.
Thus, from \eqref{eq:E1X} and \eqref{eq:6Xb} we obtain
\[
\mathcal{Z}(\Theta_{E_6})\setminus (\mathcal{E}_0\cup \mathcal{E}_1)
=
\mathcal{Z}(\mathcal{R})\setminus (\mathcal{E}_1\cup \mathcal{Z}(r_6))
=\Xi_{E_6}\setminus \mathcal{Z}(r_6)\subset h(\mathbb{R}^5).
\]
Together with Corollary \ref{eq:1770} and
Theorem~\ref{thm:E6},
this yields
\begin{equation}\label{eq:2074}
\dim_H \bigl(\mathcal{Z}(\Theta_{E_6})\setminus h(\mathbb{R}^5)\bigr)<5.
\end{equation}
Moreover, Corollary~\ref{eq:1366} implies
\begin{equation}\label{eq:2023}
h(\mathbb{R}^5)\subset \mathcal{Z}(\Theta_{E_6})\cup \mathcal{E}_2,
\qquad \mathcal{E}_2:=h(\mathbb{R}^5)\cap \mathcal{Z}(r_6).
\end{equation}
By (2) of Theorem~\ref{thm:1954}, we know that $\dim_H(\mathcal{E}_2)<5$.
Suppose that $h^{-1}(\mathcal{E}_2)$ has non-empty interior.  
Since $h$ is a $5$-dimensional real analytic map, this would imply that
$
h\bigl(h^{-1}(\mathcal{E}_2)\bigr)
$
has dimension $5$ (cf.~Proposition~\ref{prop:HDIM}), 
contradicting $\dim_H(\mathcal{E}_2)<5$.

Now fix $\mathbf{x}\in \mathcal{E}_2$.  
Then there exists $u\in \mathbb{R}$ such that
$
h(u, v(\mathbf{x}), \mathbf{x}_3)=\mathbf{x}.
$
Since $h^{-1}(\mathcal{E}_2)$ has no interior points, we can choose a sequence
\[
\{(u_n,v_n,\mathbf{z}_n)\}_{n=1}^\infty 
\subset \mathbb{R}^5\setminus h^{-1}(\mathcal{E}_2)
\]
converging to $(u,v(\mathbf{x}),\mathbf{x}_3)$.  
By \eqref{eq:2023}, we then have
$
h(u_n,v_n,\mathbf{z}_n)\in \mathcal{Z}(\Theta_{E_6}),
$
which implies
\[
\mathbf{x}=\lim_{n\to\infty}h(u_n,v_n,\mathbf{z}_n)\in \mathcal{Z}(\Theta_{E_6}).
\]
Therefore, $\mathcal{E}_2\subset \mathcal{Z}(\Theta_{E_6})$.  
Combining this with \eqref{eq:2023}, we conclude that
$\Theta_{E_6}$ is a main-analytic function of $h(\R^5)$.
\end{proof}

\begin{Rmk}\label{rmk:e6}
By Proposition~\ref{prop:1366}, we have
$h^\C(\C^5\setminus \mc Z_\C(\delta_6))\subset \mc Z_\C(\Theta_{E_6})$.
Since $h^\C$ is proper, we obtain
\begin{equation}\label{eq:E6hc}
h^\C(\C^5)
=h^\C\bigl(\,\overline{\C^5\setminus \mc Z_\C(\delta_6)}\,\bigr)
=\overline{h^\C(\C^5\setminus \mc Z_\C(\delta_6))}
\subset \mc Z_\C(\Theta_{E_6}).
\end{equation}

Let $\mc L:=\mc Z_\C(\Theta_{E_6})\cap \mc Z_\C(r_6)$. 
Since the complex dimension of $\mc L$ is less than~$5$
(by the same reason as in the proof of 
Corollary~\ref{eq:1770}),
$\mc L$ has no interior point.
On the other hand,
Corollary~\ref{cor:2024} implies that
$\mc Z(\Theta_{E_6})\setminus \mc L\subset h^\C(\C^5)$.
By the properness of $h^\C$, it follows that
\begin{equation}\label{eq:E6hc2}
\mc Z_\C(\Theta_{E_6})=\overline{\mc Z(\Theta_{E_6})\setminus \mc L}
\subset \overline{h^\C(\C^5)}=h^\C(\C^5).
\end{equation}
Combining \eqref{eq:E6hc} and \eqref{eq:E6hc2}, we have
$
h^\C(\C^{5})=\mathcal{Z}_\C(\Theta_{E_6})\subset\C^{6}.
$

Since $\Theta_{E_6}$ is an irreducible polynomial over~$\Q$
(cf.~Proposition~\ref{thm:E6main}),
it coincides with the defining polynomial of
$h^\C(\C^{5})$. 
In particular, up to a nonzero scalar,
$\Theta_{E_6}$ coincides with
the discriminant polynomial of type $E_6$
which is known to be irreducible over $\C$.

Assigning weights $(12,8,9,6,5,2)$ to $(x_0,\ldots,x_5)$, we obtain:
\begin{itemize}
\item $r_6(\mb x)$ is a weighted homogeneous polynomial of degree $15$,
\item $\op{Res}_v(A^\mb x,B^\mb x)$ is weighted homogeneous of degree $102$,
\item the main-analytic  function $\Theta_{E_6}$ is weighted homogeneous of degree $72$.
\end{itemize}
\end{Rmk}

\section{Singularities of type $E_7$}

The standard map
$
h:=h_{E_7}:\R^{6}\to \R^{7}
$
of $E_7$-singular points
is defined by
$$
h(u,v,\mb x_3):=
\Big(
h_0(u,v,\mb x_3),
h_1(u,v,\mb x_3),
h_2(u,v,\mb x_3),\mb x_3
\Big),
$$
where 
$\mathbf{x}_3 := (x_3, x_4, x_5, x_6)$ is the subvector of 
$\mathbf{x} \in \mathbb{R}^7$,
and
\begin{align}\label{eq:1314a}
h_0(u,v,\mb x_3)&:=
2 u^3 +x_3v^2  + 2 x_4v^3  + u v\, \delta_7(v), \\
\label{eq:1314b}
h_1(u,v,\mb x_3)&:=-3 u^2 - v^3 - x_5v  -x_6 v^2, \\
\label{eq:1314c}
h_2(u,v,\mb x_3)&:=
- 2 x_3v - 3 x_4v^2  -u \, \delta_7(v), \\
\label{eq:d7}
\delta_7(v)&:=3 v^2+2 x_6v+x_5.
\end{align}
As an analogue of Proposition~\ref{prop:e6},
the following assertion holds:

\begin{Prop}\label{prop:e7}
The image of the standard map $h$ coincides with
the following set
\begin{align*}
\mc W_{E_7}&:=\Big\{\mb x:=(x_0,\ldots,x_{6})\in \R^7\,;\, 
\text{there exists $(u,v)\in \R^2$ such that}
\\
& \phantom{aaaaaaaaaaaaaaaaaaaaaaaa}
\text{$F(u,v,\mb x)=F_u(u,v,\mb x)=F_v(u,v,\mb x)=0$}\Big\},
\end{align*}
where 
\begin{align}
\label{eq:E1047b}
F(u,v,\mb x)&:= u^3+u v^3+x_6u v^2+x_5u v 
+x_4v^3 + 
x_3v^2 +x_2 v+x_1u+x_0.
\end{align}
\end{Prop}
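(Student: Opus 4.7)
The plan is to imitate the direct, calculation-based proof of Proposition~\ref{prop:e6}, since the only change is that the generating family $F$ is now given by \eqref{eq:E1047b}. The strategy has three steps: compute $F_u$ and $F_v$; observe that each is affine in exactly one of the coordinates $x_1$ or $x_2$, and solve for that coordinate to recover $h_1$ and $h_2$; then substitute back into $F=0$ to recover $h_0$.

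First I would compute
\[
F_u = 3u^2 + v^3 + x_6 v^2 + x_5 v + x_1.
\]
Solving $F_u=0$ for $x_1$ gives $x_1 = -3u^2 - v^3 - x_5 v - x_6 v^2$, which is exactly $h_1(u,v,\mb x_3)$ of \eqref{eq:1314b}. Similarly,
\[
F_v = u(3v^2 + 2x_6 v + x_5) + 3 x_4 v^2 + 2 x_3 v + x_2 = u\,\delta_7(v) + 3 x_4 v^2 + 2 x_3 v + x_2,
\]
and $F_v=0$ yields $x_2 = -u\,\delta_7(v) - 2 x_3 v - 3 x_4 v^2$, matching $h_2(u,v,\mb x_3)$ in \eqref{eq:1314c}.

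Next, to establish the inclusion $\mc W_{E_7}\subset h(\R^6)$, I would substitute these two expressions back into $F=0$ and solve for $x_0$. Using $x_1 u = -3u^3 - uv^3 - x_5 uv - x_6 uv^2$ and $x_2 v = -uv\,\delta_7(v) - 2 x_3 v^2 - 3 x_4 v^3$, the monomials $uv^3$, $x_5 uv$, and $x_6 uv^2$ in $F$ cancel entirely, while the coefficients of $x_3 v^2$ and $x_4 v^3$ combine to the values appearing in $h_0$, leaving
\[
x_0 = 2u^3 + x_3 v^2 + 2 x_4 v^3 + uv\,\delta_7(v),
\]
which is precisely $h_0$ in \eqref{eq:1314a}. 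Reading the same chain of substitutions in the opposite direction gives the reverse inclusion $h(\R^6)\subset \mc W_{E_7}$.

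The computation is purely routine polynomial algebra and presents no genuine obstacle. The only point requiring care is sign-tracking during the cancellations when $x_1 u$ and $x_2 v$ are reinserted into $F$, and to keep this transparent I would arrange the monomials of $F$ in the same order as the terms appearing in $h_0$, $h_1$, and $h_2$ before performing the substitutions.
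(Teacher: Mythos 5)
Your proposal is correct and follows exactly the argument the paper uses for the analogous Proposition~\ref{prop:e6} (the paper omits the proof of Proposition~\ref{prop:e7} as routine, stating it only as an analogue). The computations of $F_u$, $F_v$, and the back-substitution into $F=0$ all check out and recover \eqref{eq:1314a}--\eqref{eq:1314c} as claimed.
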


As in
Propositions~\ref{prop:1c}, \ref{prop:1cd}
and \ref{prop:1101},
the following assertion holds:

\begin{Prop}\label{prop:1404}
The map $h$ is a $6$-dimensional real analytic map.
\end{Prop}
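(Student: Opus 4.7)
The plan is to follow exactly the pattern used in Propositions~\ref{prop:1c} and~\ref{prop:1cd}, exploiting the block structure of the Jacobian. Since $h$ takes the form $h(u,v,\mb x_3)=(h_0,h_1,h_2,\mb x_3)$ with $\mb x_3=(x_3,x_4,x_5,x_6)$, the Jacobian matrix $J$ of $h$ with respect to the coordinates $(u,v,x_3,x_4,x_5,x_6)$ is a $7\times 6$ matrix, and the four rows corresponding to the components $(h_3,h_4,h_5,h_6)=(x_3,x_4,x_5,x_6)$ have vanishing $u$- and $v$-derivatives and form a $4\times 4$ identity block in the last four columns. Hence to establish $\op{rank}\,dh=6$ at some point, it suffices to exhibit a $2\times 2$ submatrix
\[
\mc M:=\pmt{(h_i)_u & (h_i)_v \\ (h_j)_u & (h_j)_v}
\]
for some choice of indices $i,j\in\{0,1,2\}$ that is non-singular at some $(u,v,\mb x_3)$.

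Following the $D_k$ proof, I would simplify the computation by specializing to $\mb x_3=\mb 0$, which forces $\delta_7(v)=3v^2$ and yields
\[
h_0=2u^3+3uv^3,\qquad h_1=-3u^2-v^3,\qquad h_2=-3uv^2.
\]
Taking $(i,j)=(1,2)$ then gives
\[
\mc M=\pmt{-6u & -3v^2 \\ -3v^2 & -6uv},
\]
whose determinant is $9v(4u^2-v^3)$. This is a nonzero polynomial in $(u,v)$, so $\mc M$ has rank two at generic $(u,v)$; for instance, it is non-singular at $(u,v)=(1,1)$. Combined with the identity block in the last four rows and columns, this produces a non-vanishing $6\times 6$ minor of $J$ at the point $(1,1,\mb 0)$.

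Because the set where $dh$ has maximal rank $6$ is the complement of the zero set of a finite collection of $6\times 6$ minors of $J$, and we have just exhibited one minor that is not identically zero, this set is a non-empty Zariski-open, hence dense open, subset of $\R^6$. This is exactly the condition in the definition of a $6$-dimensional real analytic map. No real obstacle is anticipated: the only task is to write down $\mc M$ explicitly and observe that its determinant does not vanish identically, and the algebra is no more involved than in the $D_k$ case.
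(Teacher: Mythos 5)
Your proof is correct and follows exactly the pattern the paper intends: the paper omits the proof of this proposition, stating only that it goes ``as in'' Propositions~\ref{prop:1c} and~\ref{prop:1cd}, which use the same identity-block reduction and a $2\times2$ minor evaluated after setting the parameters $\mb x_3$ to zero. Your computation of $\det\mc M=9v(4u^2-v^3)$ and its non-vanishing at $(u,v)=(1,1)$ checks out.
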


\begin{Prop}\label{prop:E1073}
The map $h:\R^{6}\to \R^7$ 
$($resp.\ $h^\C:\C^{6}\to \C^7)$
is proper.
Moreover, for each $\mb x\in \R^7$ $($resp.\ $\mb x\in \C^7)$, 
the inverse image $h^{-1}(\mb x)$ is finite.
Furthermore, $h^{-1}(\mb 0)=\{o\}$ 
$($resp.\ $(h^\C)^{-1}(\mb 0)=\{o\})$.
\end{Prop}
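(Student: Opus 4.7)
My plan is to mirror the three-part structure used to prove Proposition~\ref{prop:E1057} for the $E_6$ case, since the defining equations for $h_{E_7}$ share the same schematic form: a quadratic constraint on $u$ from \eqref{eq:1314b} and a linear constraint on $u$ (with coefficient $\delta_7(v)$) from \eqref{eq:1314c}. I will write the argument for $h$; the reasoning applies verbatim to $h^\C$, since every bound below is algebraic.

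First, I would verify $h^{-1}(\mb 0)=\{o\}$ by direct substitution. Setting $\mb x=\mb 0$ forces $\mb x_3=\mb 0$, so $\delta_7(v)=3v^2$, and the three equations \eqref{eq:1314a}--\eqref{eq:1314c} collapse to relations that immediately yield $u=v=0$.

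For the finiteness of $h^{-1}(\mb x)$, I would split into two cases depending on whether $\delta_7(v)$ vanishes. If $\delta_7(v)=0$, then $v$ is a root of the quadratic $3v^2+2x_6v+x_5$ with nonzero constant leading coefficient $3$, so there are at most two possibilities; for each, equation \eqref{eq:1314b} determines $u$ up to sign. If $\delta_7(v)\ne 0$, then \eqref{eq:1314c} solves $u$ linearly as
\[
u=-\frac{x_2+2x_3v+3x_4v^2}{\delta_7(v)},
\]
and substituting this into \eqref{eq:1314b} produces a polynomial equation $A^{\mb x}(v)=0$ of degree $7$ in $v$, whose leading term is $9v^7$ (arising from $\delta_7(v)^2\cdot v^3$). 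Hence there are at most seven candidates for $v$, and each determines $u$ uniquely.

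For properness, given a compact $\mc K\subset\R^7$ with $\mb x=h(u,v,\mb x_3)\in \mc K$, in either case above the relevant polynomial in $v$ (namely $\delta_7$ or $A^{\mb x}$) has a nonzero constant leading coefficient, so Cauchy's bound (Remark~\ref{rmk:Cauchy}) provides a uniform bound on $|v|$ in terms of $\mc K$. Then \eqref{eq:1314b}, rewritten as $3u^2=-x_1-v^3-x_5v-x_6v^2$, bounds $|u|$ as well. Thus $h^{-1}(\mc K)$ is bounded and closed, hence compact. The only point requiring care is confirming that the leading coefficient of $A^{\mb x}(v)$ is a nonzero universal constant independent of $\mb x$; this is an elementary degree count, so there is no essential obstacle, the argument proceeding exactly in parallel with the $E_6$ case.
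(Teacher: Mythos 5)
Your proposal is correct and follows essentially the same route as the paper's proof (given there for the unified map $\hat h$ in Proposition~\ref{prop:E7-proper}): the case split on $\delta_7(v)=0$ versus $\delta_7(v)\ne 0$, the elimination of $u$ producing $A^{\mb x}(v)$ with leading term $9v^7$, and Cauchy's bound (Remark~\ref{rmk:Cauchy}) to control $|v|$ and then $|u|$ for properness. No gaps.
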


Set $\K=\R$ or $\C$ and define $\hat h$ as in the previous sections:

\begin{Prop}\label{prop:E7-proper}
The map $\hat h:\K^{6}\to \K^7$ is proper.
Moreover, for each $\mb x\in \K^7$, 
the inverse image $\hat h^{-1}(\mb x)$ is finite.
Furthermore, $\hat h^{-1}(\mb 0)=\{o\}$.
\end{Prop}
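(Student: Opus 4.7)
The plan is to follow the blueprint of Proposition~\ref{prop:E1057} for the $E_6$ case: eliminate $u$ from the three equations $h_i(u,v,\mb x_3)=x_i$ ($i=0,1,2$) to obtain a univariate polynomial $A^{\mb x}(v)$ of bounded degree whose leading coefficient is a nonzero constant, and then apply Cauchy's bound (Remark~\ref{rmk:Cauchy}) to control $|v|$ over compacta.

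The statement $\hat h^{-1}(\mb 0)=\{o\}$ will be immediate: the last four coordinates of $\hat h(u,v,\mb x_3)=\mb 0$ force $\mb x_3=\mb 0$, and then \eqref{eq:1314b} and \eqref{eq:1314c} reduce to $3u^2+v^3=0$ and $-3uv^2=0$, which together force $u=v=0$.

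For the elimination, I would combine the relations $h_1=x_1$ and $h_2=x_2$:
\[
3u^2=-x_1-v^3-x_5v-x_6v^2,\qquad u\,\delta_7(v)=-(x_2+2x_3v+3x_4v^2).
\]
Squaring the second relation and substituting the first to eliminate $u$ produces
\[
A^{\mb x}(v):=\delta_7(v)^2\bigl(x_1+v^3+x_5v+x_6v^2\bigr)+3\bigl(x_2+2x_3v+3x_4v^2\bigr)^2=0.
\]
Since $\delta_7(v)=3v^2+2x_6v+x_5$ has constant leading coefficient $3$, this $A^{\mb x}(v)$ is a polynomial of degree $7$ in $v$ with leading term $9v^7$. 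Hence $v$ takes at most seven values, and for each such $v$ the identity for $3u^2$ admits at most two values of $u$; this proves the finiteness of $\hat h^{-1}(\mb x)$.

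For properness, given a compact $\mc K\subset\K^7$ and $(u,v,\mb x_3)\in\hat h^{-1}(\mc K)$, the coordinates $x_0,\ldots,x_6$ are uniformly bounded on $\mc K$. Because the leading coefficient of $A^{\mb x}(v)$ is the nonzero constant $9$, independent of $\mb x$, Cauchy's bound applied to $A^{\mb x}(v)/9$ yields a uniform bound on $|v|$, and the identity $3u^2=-x_1-v^3-x_5v-x_6v^2$ then produces a uniform bound on $|u|$. Combined with the closedness of $\hat h^{-1}(\mc K)$ coming from the continuity of $\hat h$, this gives compactness. The one point deserving verification, and the main technical anchor of the argument, is that the leading coefficient of $A^{\mb x}(v)$ in $v$ is a nonzero constant; this is ensured here because $\delta_7(v)$ has constant leading coefficient $3$, so no case analysis on $\mb x_3$ or subsidiary reasoning as in the $\delta_6=0$ branch of the $E_6$ proof is needed.
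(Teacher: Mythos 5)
Your proof is correct and follows essentially the same route as the paper's: both arguments eliminate $u$ to arrive at the same degree-$7$ polynomial $A^{\mb x}(v)$ with constant leading coefficient $9$ (your expression $\delta_7(v)^2(x_1+v^3+x_5v+x_6v^2)+3(x_2+2x_3v+3x_4v^2)^2$ expands to exactly the paper's \eqref{eq:E7-A}), and then invoke Cauchy's bound to control $|v|$ and the relation $3u^2=-(x_1+v^3+x_5v+x_6v^2)$ to control $|u|$. The one genuine difference is how the elimination is performed: the paper splits into the cases $\delta_7(v)=0$ and $\delta_7(v)\neq0$, solving for $u$ by division in the latter and treating the former via the quadratic $\delta_7(v)=0$, whereas you square the relation $u\,\delta_7(v)=-(x_2+2x_3v+3x_4v^2)$ and substitute $3u^2$, so that $A^{\mb x}(v)=0$ holds unconditionally on every fiber. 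This removes the case analysis entirely and makes the uniform bound on $|v|$ over compacta slightly cleaner; the trade-off is negligible here since the squared relation is exactly what the paper's elimination produces anyway.
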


\begin{proof}
The fact $\hat h^{-1}(\mb 0)=\{o\}$ is obvious.
Write $x_i=h_i(u,v,x_3,x_4,x_5,x_6)$ ($i=0,1,2$).
If $\delta_7(v):=3v^2+2x_6v+x_5=0$, then $v$ is a 
root of a quadratic, hence finite. 
Since
\begin{equation}\label{eq:1364a}
u^2=\tfrac13(-v^3-x_6v^2-x_5v-x_1),
\end{equation}
the possibility of $u$ is finite.
If $\delta_7(v)\ne0$, we have
\begin{equation}\label{eq:1364b}
u=\frac{-x_2-2x_3v-3x_4v^2}{\delta_7(v)}.
\end{equation}
By \eqref{eq:1364a} and \eqref{eq:1364b}, we have
\begin{align}\label{eq:E7-A}
A^{\mb x}(v):={}&
9 v^7+21 x_6 v^6+(15 x_5+16 x_6^2)v^5 \\
&+(9 x_1+27 x_4^2+22 x_5 x_6+4 x_6^3)v^4 \nonumber\\
&+(12 x_1 x_6+36 x_3 x_4+7 x_5^2+8 x_5 x_6^2)v^3 \nonumber\\
&+(6 x_1 x_5+4 x_1 x_6^2+18 x_2 x_4+12 x_3^2+5 x_5^2 x_6)v^2 \nonumber\\
&+(4 x_1 x_5 x_6+12 x_2 x_3+x_5^3)v+x_1 x_5^2+3 x_2^2. \nonumber
\end{align}
Since the leading term is $9v^7$, 
the possibility of  $v$ (and $u$) is finite. 
Thus $h^{-1}(\mb x)$ is finite.

For properness, let $\mc K\subset\K^7$ be compact and 
assume $\mb x=\hat h(u,v,\mb x_3)\in \mc K$.
If $\delta_7(v)=0$, the quadratic bounds $|v|$ by
Remark \ref{rmk:Cauchy},
and then $|u|$ is bounded.
If $\delta_7(v)\ne0$, then $v$ satisfies \eqref{eq:E7-A} 
with leading term $9v^7$, hence $|v|$ is bounded; 
consequently $|u|$ is bounded.
Therefore $\hat h^{-1}(\mc K)$ is 
bounded and closed, hence compact. Thus $\hat h$ is proper.
\end{proof}

Substituting 
\eqref{eq:1364a} and \eqref{eq:1364b}
into the equation 
$h_0(u,v, x_3,\ldots,x_6)=x_0$
(cf.~\eqref{eq:1314a}) 
and eliminating the term $\delta_7(v)$, we obtain
the equation  $B^{\mb x}(v)=0$, where
$\mb x=(x_0,\ldots,x_6)$ and
\begin{align}
B^{\mb x}(v)&:=
-3 x_4 v^5-5 x_3 v^4+(-7 x_2-2 x_3 
x_6+3 x_4 x_5)v^3 \\ \nonumber
&\phantom{aaa}+(-9 x_0+6 x_1 x_4-4 x_2 x_6+x_3 x_5)v^2
\\ \nonumber
&\phantom{aaaaaa}
+(-6 x_0 x_6+4 x_1x_3-x_2 x_5)v -3 x_0 x_5+2 x_1 x_2.
\end{align}

By following the proof of Proposition~\ref{prop:1366},
we obtain the following:

\begin{Prop}\label{prop:1322e7}
Fix $\mb x\in \K^7$. If $\mb x\in \hat h(\K^6)$ holds, 
then there exists $v\in \K$ satisfying $A^{\mb x}(v)=B^{\mb x}(v)=0$.
Conversely, if
there exists $v\in \K$ satisfying $A^{\mb x}(v)=B^{\mb x}(v)=0$
and $\delta_7(v)\ne 0$,
then $\mb x\in \hat h(\K^6)$.
\end{Prop}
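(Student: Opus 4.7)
My plan is to mirror the proof of Proposition~\ref{prop:1366} for the $E_6$ case verbatim, applying the unified framework of Appendix~C in the $E_7$ setting. For each $\mathbf{x}\in \mathbb{K}^7$, I would first introduce the analogues of \eqref{eq:1022a1}--\eqref{eq:1022c1} by rearranging \eqref{eq:1314a}--\eqref{eq:1314c} as $g_i^{\mathbf{x}}(u,v):=h_i(u,v,\mathbf{x}_3)-x_i$ for $i=0,1,2$, so that $\hat h(u,v,\mathbf{x}_3)=\mathbf{x}$ is equivalent to the simultaneous vanishing of $g_0^{\mathbf{x}}$, $g_1^{\mathbf{x}}$, and $g_2^{\mathbf{x}}$.

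For the forward direction, I would invoke Lemma~C.1 with $G_i:=g_i^{\mathbf{x}}$ ($i=0,1,2$) and $\delta:=\delta_7$, exactly as was done in the $E_6$ case; this lemma handles the cases $\delta_7(v)=0$ and $\delta_7(v)\ne 0$ uniformly. For the converse, under the same identification, Proposition~\ref{thm:Cm} directly yields the existence of $(u,v)$ with $\hat h(u,v,\mathbf{x}_3)=\mathbf{x}$, reconstructed explicitly as $u=-(x_2+2x_3 v+3x_4 v^2)/\delta_7(v)$, provided the abstract polynomials $\alpha(v)$ and $\beta(v)$ of \eqref{eq:C4} and \eqref{eq:C6} are identified with the $A^{\mathbf{x}}(v)$ of \eqref{eq:E7-A} and the explicit $B^{\mathbf{x}}(v)$ given in the text.

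This specialization check is the step I expect to require the most care. Concretely, squaring the relation $u\delta_7(v)=-(x_2+2x_3 v+3x_4 v^2)$ obtained from $g_2^{\mathbf{x}}=0$ and substituting $3u^2=-(v^3+x_6 v^2+x_5 v+x_1)$ from $g_1^{\mathbf{x}}=0$ yields a degree-$7$ polynomial in $v$ with leading term $9v^7$, agreeing with \eqref{eq:E7-A}. For $B^{\mathbf{x}}(v)$, multiplying $g_0^{\mathbf{x}}=0$ by $3\delta_7(v)$ and eliminating the monomials $u\delta_7(v)$ and $u^3\delta_7(v)$ through the two identities $u\delta_7(v)=-(x_2+2x_3 v+3x_4 v^2)$ and $u^3\delta_7(v)=\tfrac{1}{3}(v^3+x_6 v^2+x_5 v+x_1)(x_2+2x_3 v+3x_4 v^2)$ produces a degree-$5$ polynomial in $v$ with leading term $-3x_4 v^5$, matching the text. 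Both identities are routine polynomial expansions, verifiable by direct computation or symbolic algebra, after which Proposition~\ref{thm:Cm} delivers the converse direction.
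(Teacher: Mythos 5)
Your proposal is correct and follows essentially the same route as the paper, which proves this proposition simply by repeating the argument of Proposition~\ref{prop:1366}: the forward direction via Lemma~\ref{LemC1} and the converse via Proposition~\ref{thm:Cm} of Appendix~C, after identifying the abstract polynomials $\alpha$ and $\beta$ with $A^{\mb x}$ and $B^{\mb x}$. Your explicit verification of that identification (including the leading terms $9v^7$ and $-3x_4v^5$) is exactly the specialization check the paper leaves implicit.
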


\begin{Corollary}\label{eq:1366e7}
The image $h(\R^6)$ $($resp. $h^\C(\C^6))$
is a subset of $\mc Z(\mc R)$ $($resp. $\mc Z_\C(\mc R))$.
\end{Corollary}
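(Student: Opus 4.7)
The plan is to derive this inclusion directly from Proposition~\ref{prop:1322e7} combined with the defining property of the resultant recalled in Appendix~A: for two univariate polynomials whose top-degree coefficient is a nonzero constant, the resultant vanishes precisely when they share a common root in $\C$. Since $\mc R(\mb x)$ is by construction $\op{Res}_v(A^{\mb x}, B^{\mb x})$, the corollary is essentially a one-line bookkeeping consequence, but a small check on leading coefficients is needed for it to be fully rigorous.

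First I would fix an arbitrary $\mb x\in \hat h(\K^6)$, where $\K=\R$ or $\K=\C$. The first assertion of Proposition~\ref{prop:1322e7} (which is proved in both cases by the same argument as in the $E_6$ case) produces some $v\in\K\subset\C$ satisfying $A^{\mb x}(v)=B^{\mb x}(v)=0$. Thus $A^{\mb x}$ and $B^{\mb x}$, viewed as polynomials in $v$ over $\C$, share at least one root.

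Next I would verify that the resultant correctly detects this common root. The explicit formula \eqref{eq:E7-A} shows that the leading coefficient of $A^{\mb x}(v)$ is the nonzero constant $9$, so $\deg_v A^{\mb x}=7$ holds for every $\mb x\in\K^7$. In particular, the Sylvester matrix used to define $\mc R(\mb x)=\op{Res}_v(A^{\mb x},B^{\mb x})$ has the correct size for all $\mb x$, and no spurious degree-drop phenomenon can occur. Hence the standard criterion applies: the existence of a common root of $A^{\mb x}$ and $B^{\mb x}$ in $\C$ is equivalent to $\mc R(\mb x)=0$.

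Combining these two observations gives $\mc R(\mb x)=0$, i.e.\ $\mb x\in\mc Z(\mc R)$ (respectively $\mc Z_\C(\mc R)$ when $\K=\C$), which is the desired inclusion. There is no genuine obstacle in this argument; the only point deserving of explicit mention is the constancy of the leading coefficient of $A^{\mb x}$, which ensures that the resultant criterion is valid uniformly in $\mb x$.
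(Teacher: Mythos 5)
Your argument is correct and is exactly the route the paper intends: the corollary is stated as an immediate consequence of Proposition~\ref{prop:1322e7} together with the resultant criterion of Appendix~A, which applies because the leading coefficient of $A^{\mb x}(v)$ is the nonzero constant $9$ (a point the paper itself records just after defining $\mathcal R$ and $\mathcal S$). Your explicit check of the leading coefficient is the only detail the paper leaves implicit, and it is handled correctly.
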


We set (cf. \eqref{eq:550})
$$
\mc R(\mb x):=\op{Res}_v(A^{\mb x},B^{\mb x}),\quad
\mc S(\mb x):=\op{Psc}_v(A^{\mb x},B^{\mb x})
\qquad (\mb x:=(x_0,\ldots,x_{6})\in \R^7).
$$
Since the leading term of $A^{\mb x}(v)$ with respect to $v$ is $9v^7$,  
the condition $\mc R(\mb x)=0$ (resp.~$\mc R(\mb x)=\mc S(\mb x)=0$) implies that  
$A^{\mb x}(v)$ and $B^{\mb x}(v)$ have at least one (resp.~two) common root(s).  
Corresponding to \eqref{eq:1314a}, \eqref{eq:1314b} and \eqref{eq:1314c}, we set
\begin{align}\label{eq:1022a7}
g^{\mb x}_0(u,v)&:=2u^3+2x_4v^3+x_3v^2-x_0+uv\, \delta_7(v), \\
\label{eq:1022b7}
g^{\mb x}_1(u,v)&:=3u^2-v^3-x_6 v^2-x_5v-x_1, \\
\label{eq:1022c7}
g^{\mb x}_2(u,v)&:=-3x_4v^2 -2x_3v-x_2-\delta_7(v)u.
\end{align}
By definition, we have:

\begin{Proposition}\label{prop:1742E7}
For $\mb x\in \K^7$ and $u,v\in \K$,
$\hat h(u,v,\mb x_3)=\mb x$ is equivalent to 
the simultaneous vanishing of 
$g_i^{\mb x}(u,v)$ $(i=0,1,2)$.
\end{Proposition}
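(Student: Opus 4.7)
The plan is to prove this proposition by a direct termwise comparison of the explicit polynomial formulas, mirroring what was done for the $E_6$ case in Proposition~\ref{prop:1742}. Since $\hat h$ agrees with $h$ (or its complexification) on the set-theoretic level, the condition $\hat h(u,v,\mb x_3)=\mb x$ decomposes, by inspection of the last four coordinates, into the three scalar equations $h_i(u,v,\mb x_3)=x_i$ for $i=0,1,2$, together with the automatic identity on $\mb x_3$. So the task reduces to checking that each of these three equations is equivalent to $g_i^{\mb x}(u,v)=0$.

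First, I would compare \eqref{eq:1314a} with \eqref{eq:1022a7} and \eqref{eq:1314c} with \eqref{eq:1022c7}: every monomial matches, so
\[
  g_0^{\mb x}(u,v)=h_0(u,v,\mb x_3)-x_0,\qquad
  g_2^{\mb x}(u,v)=h_2(u,v,\mb x_3)-x_2.
\]
Thus $g_0^{\mb x}=0 \Leftrightarrow h_0=x_0$ and $g_2^{\mb x}=0 \Leftrightarrow h_2=x_2$.

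Next, for the $i=1$ case I would compare \eqref{eq:1314b} with \eqref{eq:1022b7}. Here one must track an overall sign: in \eqref{eq:1314b} the leading quadratic term in $u$ is $-3u^2$, whereas $g_1^{\mb x}$ has been normalized so that this term becomes $+3u^2$. Term by term, this gives the identity $g_1^{\mb x}(u,v)=-\bigl(h_1(u,v,\mb x_3)-x_1\bigr)$, so the equivalence $g_1^{\mb x}=0 \Leftrightarrow h_1=x_1$ holds. This normalization is consistent with \eqref{eq:1364a}, where $u^2$ is expressed by inverting exactly the equation $h_1=x_1$.

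Combining the three equivalences yields the proposition. There is no genuine obstacle here — it is a purely algebraic verification from the definitions — and the only point requiring mild care is the sign bookkeeping in $g_1^{\mb x}$, which is essentially parallel to the $E_6$ case already treated.
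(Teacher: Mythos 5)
Your overall strategy --- reducing $\hat h(u,v,\mb x_3)=\mb x$ to the three scalar equations $h_i=x_i$ ($i=0,1,2$) and matching each against $g_i^{\mb x}$ --- is exactly what the paper intends (it offers nothing beyond ``by definition''), and your identities $g_0^{\mb x}=h_0-x_0$ and $g_2^{\mb x}=h_2-x_2$ do check out term by term. The problem is the case $i=1$: the identity you assert, $g_1^{\mb x}(u,v)=-\bigl(h_1(u,v,\mb x_3)-x_1\bigr)$, is \emph{false} for the formula \eqref{eq:1022b7} as printed. Indeed $-\bigl(h_1-x_1\bigr)=3u^2+v^3+x_6v^2+x_5v+x_1$, whereas \eqref{eq:1022b7} reads $g_1^{\mb x}=3u^2-v^3-x_6v^2-x_5v-x_1$: only the $u^2$ term agrees, every other term has the opposite sign, and the two zero sets genuinely differ (one forces $3u^2=-(v^3+x_6v^2+x_5v+x_1)$, the other $3u^2=+(v^3+x_6v^2+x_5v+x_1)$). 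The intended normalization is $g_1^{\mb x}=F_u=3u^2+v^3+x_6v^2+x_5v+x_1$ (cf.\ \eqref{eq:E1047b}), which is what is consistent with \eqref{eq:1364a} and with the conventions of the $E_6$ and $E_8$ sections; the printed \eqref{eq:1022b7} contains a sign error. So the ``term by term'' comparison you describe would not actually close if carried out against the displayed definition: your write-up asserts a verification that fails as stated. The fix is easy --- either correct \eqref{eq:1022b7} to $3u^2+v^3+x_6v^2+x_5v+x_1$ (or to $h_1-x_1$ up to sign) and then your argument goes through verbatim --- but a blind proof of this proposition should detect and flag that inconsistency rather than report that the termwise match succeeds.
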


We set
$
r_7(\mb x):=\op{Res}_v(g_2^{\mb x},\delta_7)/3,
$
then a direct computation with \textit{Mathematica} yields
$$
r_7(\mb x)= 3x_2^2
+(-6 x_4 x_5-4 x_3 x_6+4 x_4 x_6^2)x_2+x_5 (4 x_3^2+3 x_4^2 x_5-4 x_3 x_4 x_6)
$$
and
\begin{align}\label{T7}
\mc R(\mb x)&=3^{20} r_7(\mb x)^2 x_0^7+
      \text{(lower-order terms in $x_0$)}, \\
\mathcal S(\mb x)&=2^2 3^{18} \label{S7}
(x_3 - x_4 x_6) (3 x_2 - 3 x_4 x_5 - 2 x_3 x_6 + 2 x_4 x_6^2)
 x_0^6 \\\nonumber
&\phantom{aaaaaaaaaaaaaaaaaaaaaaaaa}    + \text{(lower-order terms in $x_0$)}. 
\end{align}

Regarding \eqref{T7} and \eqref{S7}, set
\begin{align*}
\mc T_1&:=
\Big\{
\mb x\in \R^7\,;\, 
r_7(\mb x)=x_3 - x_4 x_6=0\Big\}, \\
\mc T_2&:=
\Big\{
\mb x\in \R^7\,;\, 
r_7(\mb x)=3 x_2 - 3 x_4 x_5 - 2 x_3 x_6 + 2 x_4 x_6^2=0\Big\},
\end{align*}
and $\mc T := \mc T_1 \cup \mc T_2$.
Then $\dim_H(\mc T)<6$, and we have:

\begin{Thm}\label{thm:E7}
The set 
$
\mc E_1:=\mc T\cup \Big(\mc Z(\mc R)\cap \mc Z(\mc S)\Big)
$
has dimension less than $6$, and 
$
\Xi_{E_7}:=\mc Z(\mc R)\setminus \mc E_1
$
is an open subset of $\mc Z(\mc R)$.
Moreover, for each $\mb x \in \Xi_{E_7}$, 
there exists a unique real number $v(\mb x)$ 
that is a common root of $A^{\mb x}(v)$ and $B^{\mb x}(v)$.
\end{Thm}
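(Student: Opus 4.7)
My plan is to follow the proof of Theorem~\ref{thm:E6} essentially verbatim, replacing the $E_6$ data by its $E_7$ counterparts. First, I would establish that $\dim_H \mc Z(\mc R)=6$: by Corollary~\ref{eq:1366e7} we have $h(\R^6)\subset \mc Z(\mc R)$, and since $h$ is a $6$-dimensional real analytic map by Proposition~\ref{prop:1404}, Proposition~\ref{prop:HDIM} yields $\dim_H h(\R^6)=6$, whence $\dim_H \mc Z(\mc R)\ge 6$. The reverse inequality is immediate because, by~\eqref{T7}, $\mc R$ is a non-constant polynomial in $\R[\mb x]$, so $\mc Z(\mc R)$ is a proper algebraic hypersurface in $\R^7$. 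The inequality $\dim_H\mc T<6$ has already been asserted just before the theorem; to verify it one checks that each of the pairs $(r_7,\,x_3-x_4x_6)$ and $(r_7,\,3x_2-3x_4x_5-2x_3x_6+2x_4x_6^2)$ is coprime in $\R[\mb x]$ (for example, substituting $x_3=x_4x_6$ into $r_7$ yields $3(x_2-x_4x_5)^2$, which is not identically zero), so each $\mc T_i$ has codimension at least two in $\R^7$.

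The crucial step is to show that
$$
\mc L:=\{\mb x\in \R^7\setminus \mc T\,;\ \mc R(\mb x)=\mc S(\mb x)=0\}
$$
has Hausdorff dimension strictly less than $6$. Mirroring the proof of Theorem~\ref{thm:E6}, I would form $\phi(x_1,\ldots,x_6):=\op{Res}_{x_0}(\mc R,\mc S)$ and exhibit explicit integers $(a_1,\ldots,a_6)\in\Z^6$ together with a small prime $p$ such that $\phi(a_1,\ldots,a_6)\not\equiv 0\pmod{p}$. Corollary~\ref{cor:A}, combined with the computations~\eqref{T7} and \eqref{S7} of the leading $x_0$-coefficients of $\mc R$ and $\mc S$ whose simultaneous vanishing defines~$\mc T$, then yields $\dim_H\mc L<6$. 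Consequently $\dim_H\mc E_1=\dim_H(\mc T\cup \mc L)<6$, and since $\mc E_1$ is a real algebraic (hence closed) set, $\Xi_{E_7}=\mc Z(\mc R)\setminus\mc E_1$ is an open subset of $\mc Z(\mc R)$.

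For the uniqueness assertion, fix $\mb x\in \Xi_{E_7}$. The leading coefficient of $A^{\mb x}(v)$ as a polynomial in $v$ is the nonzero constant $9$, so $\mc R(\mb x)=0$ forces $A^{\mb x}$ and $B^{\mb x}$ to share at least one common root in $\C$. If they shared two distinct common roots, then $\mc S(\mb x)=0$, contradicting $\mb x\notin \mc E_1$; hence the common root is unique, and we denote it by $v(\mb x)$. If $v(\mb x)$ were non-real, its complex conjugate $\overline{v(\mb x)}$ would be a second common root of the real polynomials $A^{\mb x},B^{\mb x}$, again forcing $\mc S(\mb x)=0$; therefore $v(\mb x)\in\R$.

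The principal obstacle will be the resultant computation producing $\phi$. For the $E_7$ data, $\mc R$ and $\mc S$ are substantially larger than their $E_6$ counterparts, so even producing a symbolic description of $\phi$ may be costly. To keep the computation tractable, I would apply the same simplification as in Remark~\ref{rmk:1430}, replacing $B^{\mb x}$ by its $x_0$-component $B_0^{\mb x}$ defined in~\eqref{eq:B0}; this extracts the leading $x_0$-coefficients of $\mc R$ and $\mc S$—which are what actually control the mod-$p$ nonvanishing of $\phi$ at a suitably chosen rational specialization—from a much smaller resultant.
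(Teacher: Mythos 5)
Your overall skeleton is exactly the paper's: establish $\dim_H\mc Z(\mc R)=6$ from Corollary~\ref{eq:1366e7}, Proposition~\ref{prop:1404} and the non-constancy of $\mc R$; dispose of $\mc T$ separately; control $\mc L$ via $\phi:=\op{Res}_{x_0}(\mc R,\mc S)$ and Corollary~\ref{cor:A} at an integer specialization modulo a small prime; and deduce uniqueness and reality of $v(\mb x)$ from $\mc S(\mb x)\ne 0$ together with the fact that the leading coefficient $9$ of $A^{\mb x}$ never vanishes. The paper fills in the one datum you leave open: it takes $\xi_1=(x_0,1,1,0,1,1,1)$ and checks $\op{Res}_{x_0}\bigl(\mc R(\xi_1),\mc S(\xi_1)\bigr)\equiv 1\pmod 5$.

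The one step that would fail is the computational shortcut in your last paragraph. Remark~\ref{rmk:1430} only guarantees that the \emph{leading} $x_0$-coefficients of $\op{Res}_v(A^{\mb x},B_0^{\mb x})$ and $\op{Psc}_v(A^{\mb x},B_0^{\mb x})$ agree, up to nonzero scalars, with those of $\mc R$ and $\mc S$; the lower-order terms are genuinely different polynomials. The leading $x_0$-coefficients are what you need to identify $\mc T$ (the locus where Proposition~\ref{prop:BB} cannot be applied in the variable $x_0$), but they do \emph{not} control the value of $\phi=\op{Res}_{x_0}(\mc R,\mc S)$ at a specialization: a resultant in $x_0$ depends on all coefficients of both arguments. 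So replacing $B^{\mb x}$ by $B_0^{\mb x}$ and computing $\op{Res}_{x_0}$ of the resulting pair would produce an unrelated polynomial, and its nonvanishing mod $p$ would prove nothing about $\mc L$. This is consistent with how the paper itself uses the trick in Section~7: $B_0^{\mb x}$ is used only to extract the leading terms defining $\mc T$, while the resultant $\op{Res}_{x_0}(\mc R,\mc S)$ is always computed from the genuine $\mc R$ and $\mc S$ specialized at the sample point. With that correction (i.e., performing the full, admittedly heavier, computation at an explicit point such as the paper's $\xi_1$), your argument goes through.
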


\begin{proof}
By \eqref{T7}, $\mc R(\mb x)$ is a non-constant polynomial, and hence
$
\dim_H \mc Z(\mc R)\le 6.
$
By Corollary \ref{eq:1366e7}, together with Proposition~\ref{prop:1404}, 
we obtain  
\begin{equation}
\dim_H \mc Z(\mc R)=6.
\end{equation}
Setting
$
\xi_1:=(x_0,1,1,0,1,1,1) \in \R^7
$,
one checks that
$$
\op{Res}_{x_0}\big(\mc R(\xi_1),\mc S(\xi_1)\big)\equiv 1 \quad (\mod 5).
$$
Hence
$
\mc L:=\{\mb x\notin \mc T\ ;\ \mc R(\mb x)=\mc S(\mb x)=0\}
$
has $\dim_H(\mc L)<6$. Therefore $\dim_H(\mc E_1)<6$, and $\Xi_{E_7}$ 
is open in $\mc Z(\mc R)$.
Uniqueness and reality of $v(\mb x)$ are proved as in Theorem~\ref{thm:E6}.
\end{proof}

As in Corollary~\ref{2079}, we obtain the following result.

\begin{Corollary}\label{2079b}
The image $h(\R^6)$ is a global main-analytic set of $\R^7$.
\end{Corollary}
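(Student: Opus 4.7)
The plan is to mirror the proof of Corollary~\ref{2079} (the $E_6$ version) almost verbatim, with the $E_7$ data in place of the $E_6$ data. First I would set $U := h^{-1}(\R^{7}\setminus\mathcal{E}_1)$, where $\mathcal{E}_1$ is the exceptional set identified in Theorem~\ref{thm:E7}. The last assertion of Theorem~\ref{thm:E7}, that for each $\mathbf{x}\in\Xi_{E_7}$ the common root $v(\mathbf{x})$ of $A^{\mathbf{x}}$ and $B^{\mathbf{x}}$ is the unique real number with that property, combined with the fact that $\delta_7(v)\neq 0$ on the complement of $\mathcal{T}_1\cup\mathcal{T}_2$ (so that $u$ is uniquely recovered from $v$ by \eqref{eq:1364b}), yields the injectivity of $h|_U$.

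Next I would verify that $U$ is dense in $\R^{6}$, which is the crucial hypothesis for invoking Proposition~\ref{thm:Coste-main-analytic-J} (the main-analyticity criterion in Appendix~D). If $U$ were not dense, then $h^{-1}(\mathcal{E}_1)$ would contain a non-empty open subset of $\R^{6}$ and therefore have Hausdorff dimension~$6$. Since $h$ is a $6$-dimensional real analytic map by Proposition~\ref{prop:1404}, Proposition~\ref{prop:HDIM} applied to the restriction of $h$ to this open set would give
\[
\dim_H(\mathcal{E}_1)\;\geq\;\dim_H\bigl(h(h^{-1}(\mathcal{E}_1))\bigr)=6,
\]
which contradicts the bound $\dim_H(\mathcal{E}_1)<6$ established in Theorem~\ref{thm:E7}. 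Hence $U$ is dense.

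With $h|_U$ injective on a dense subset $U$ of the source, Proposition~\ref{thm:Coste-main-analytic-J} then concludes that $h(\R^{6})$ is a global main-analytic subset of $\R^{7}$. I do not anticipate any serious obstacle: every $E_7$-specific ingredient is already in hand (uniqueness of $v(\mathbf{x})$ from Theorem~\ref{thm:E7}, the $6$-dimensionality of $h$ from Proposition~\ref{prop:1404}, properness from Proposition~\ref{prop:E7-proper}, and the equivalence in Proposition~\ref{prop:1742E7}), and the argument runs entirely parallel to the $E_6$ case. The only point deserving care is the injectivity step, since strictly speaking the uniqueness of $v(\mathbf{x})$ must be supplemented by uniqueness of the corresponding $u$; but this is immediate from \eqref{eq:1364b} on the complement of $\mathcal{T}_1\cup\mathcal{T}_2\subset\mathcal{E}_1$, which is precisely where $U$ lives.
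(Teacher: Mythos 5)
Your argument is correct and coincides with the paper's proof, which simply declares that the proof is the same as that of Corollary~\ref{2079}: the same choice of $U:=h^{-1}(\R^7\setminus\mathcal{E}_1)$, the same injectivity and density arguments, and the same appeal to Proposition~\ref{thm:Coste-main-analytic-J}. One small caveat on your injectivity refinement: $\delta_7(v(\mathbf{x}))\neq 0$ is guaranteed by $r_7(\mathbf{x})\neq 0$ via (1) of Theorem~\ref{thm:2506}, not merely by $\mathbf{x}\notin\mathcal{T}_1\cup\mathcal{T}_2$; however, even at points where $\delta_7(v(\mathbf{x}))=0$ the real $u$ remains unique, since $g_0^{\mathbf{x}}(u,v(\mathbf{x}))=0$ then determines $u^3$, so the conclusion stands.
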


\begin{proof}
The proof is the same as that of Corollary~\ref{2079}.
\end{proof}

We now try to find an explicit formula for the
main-analytic function.
As an analogue of
Proposition~\ref{thm:E6main},
we prove the following:

\begin{Prop}\label{thm:E7main}
The polynomial $\mc R(\mb x)$ is divisible by  $r_7(\mb x)^2$ and
$$
\Theta_{E_7}(\mb x):=\frac{\mc R(\mb x)}{r_7(\mb x)^2}
$$
is irreducible over $\Q$.
\end{Prop}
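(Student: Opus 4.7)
The plan is to mirror the strategy used in the proof of Proposition~\ref{thm:E6main} for the $E_6$-analog. The statement has two parts: (a) the divisibility $r_7(\mb x)^2 \mid \mc R(\mb x)$ in $\Z[\mb x]$, and (b) the irreducibility of $\Theta_{E_7}:=\mc R/r_7^2$ over $\Q$.

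For part (a), one performs the polynomial division $\mc R(\mb x)/r_7(\mb x)^2$ in \textit{Mathematica} and verifies that the remainder is identically zero. The geometric justification for expecting this divisibility is the same as in the $E_6$-case: whenever $r_7(\mb x)=0$, the polynomial $\delta_7(v)$ divides $g^{\mb x}_2(u,v)$ as a polynomial in $v$, which forces the pair $(A^{\mb x},B^{\mb x})$ to acquire an additional common root and hence produces a \emph{double} zero of the Sylvester determinant along $\mc Z(r_7)$. The divisibility is thus expected on conceptual grounds; only the final numeric cancellation needs symbolic verification.

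For part (b), a direct \texttt{Factor} call on a seven-variable polynomial of large weighted degree is computationally heavy. Following the ``small-prime'' paradigm already used in this paper (cf.\ the modular computation of $\op{Res}_{x_0}(\mc R,\mc S)$ in the proof of Theorem~\ref{thm:E7}), I would reduce $\Theta_{E_7}$ modulo a well-chosen small prime $p$, possibly after specializing one or two of the $x_i$ to small integer values, and then run a polynomial factorization algorithm over $\mathbb F_p$. Since $\Theta_{E_7}\in\Z[\mb x]$, by Gauss's lemma, irreducibility of a specialization-and-reduction that preserves the total degree lifts to irreducibility of $\Theta_{E_7}$ itself over $\Q$.

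The main obstacle is the combinatorial choice of prime $p$ and specialization that is simultaneously (i) degree-preserving, (ii) yields an $\mathbb F_p$-irreducible polynomial, and (iii) remains tractable for the factorization algorithm. A useful auxiliary constraint is weighted homogeneity: assigning appropriate weights to $(x_0,\ldots,x_6)$ (analogous to the weights $(12,8,9,6,5,2)$ for $E_6$ recorded in Remark~\ref{rmk:e6}) makes $\Theta_{E_7}$ weighted-homogeneous, so any putative factor must itself be weighted-homogeneous. This grading drastically restricts the possible decompositions and may permit replacing a full $\mathbb F_p$-factorization by a degree-count argument on the admissible weighted-homogeneous factors.
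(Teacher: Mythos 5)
Your proposal is correct and matches the paper's approach: the paper's proof of this proposition is exactly the computer-algebra verification you describe (exact division of $\mc R$ by $r_7^2$ and an irreducibility check, both in \textit{Mathematica}), and your heuristic for expecting the divisibility parallels the identities the paper records in the $E_6$ case. Your suggested refinement for part (b) --- certifying irreducibility via a degree-preserving specialization and reduction modulo a small prime, using weighted homogeneity to constrain factors --- is a sound sufficient criterion and is consistent with the small-prime techniques the paper itself uses elsewhere (cf.\ Corollary~\ref{cor:A} and Remark~\ref{rmk:1770e7b}), so it would serve as a legitimate implementation of the same computational proof.
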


\begin{proof}
As in the case of $E_6$, we proved this fact 
using \textit{Mathematica}.
\end{proof}

\begin{Corollary}\label{eq:1770e7}
The dimension of the set
$
\mc E_{0}:=\mc Z(r_7)\cap \mc Z(\Theta_{E_7})
$
is less than $6$.
\end{Corollary}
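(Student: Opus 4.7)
The plan is to follow the strategy used in the $E_6$ case to prove Corollary~\ref{eq:1770}: invoke Proposition~\ref{Prop:B0} from Appendix~B, whose hypothesis is that the two polynomials in question be distinct irreducibles over $\Q$.

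One of the two, namely $\Theta_{E_7}$, is already known to be irreducible over $\Q$ by Proposition~\ref{thm:E7main}. The remaining arithmetic step is to verify that $r_7(\mb x)$ is also irreducible over $\Q$. Since $r_7$ is a monic-up-to-a-constant quadratic in $x_2$ with coefficients in $\Q[x_3,x_4,x_5,x_6]$, this reduces to showing that the discriminant of $r_7$ with respect to $x_2$ is not a square in $\Q(x_3,x_4,x_5,x_6)$; alternatively, the irreducibility can be verified symbolically using \textit{Mathematica}, exactly as in the $E_6$ case.

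Finally, I would observe that $r_7$ and $\Theta_{E_7}$ are not associates. Indeed, $r_7$ has degree zero in $x_0$, whereas the leading-term formula \eqref{T7} together with the definition $\Theta_{E_7}=\mc R/r_7^2$ shows that $\Theta_{E_7}$ has degree exactly $7$ in $x_0$, with constant leading coefficient $3^{20}$. The two irreducible polynomials are therefore distinct, and Proposition~\ref{Prop:B0} immediately yields $\dim_H\bigl(\mc Z(r_7)\cap\mc Z(\Theta_{E_7})\bigr)<6$, as required.

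The main obstacle in this scheme is the irreducibility check for $r_7$; the rest of the argument is purely structural and parallels the $E_6$ case verbatim. Fortunately, this check is of the same nature as the one already carried out for $\Theta_{E_7}$ in Proposition~\ref{thm:E7main}, and can be handled by the same computer-algebra machinery used there.
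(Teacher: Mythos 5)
Your argument is correct and follows essentially the same route as the paper, which obtains the corollary from Proposition~\ref{thm:E7main} together with Proposition~\ref{Prop:B0} (and offers, in Remark~\ref{rmk:1770e7b}, an alternative via Corollary~\ref{cor:A} with $p=5$). The only remark worth making is that your irreducibility check for $r_7$ is dispensable: Proposition~\ref{Prop:B0} requires only that the two polynomials have no nontrivial common factor, and since $\Theta_{E_7}$ is irreducible and, by your $x_0$-degree comparison, cannot divide $r_7$, coprimality already follows without analyzing the factorization of $r_7$.
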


\begin{Remark}\label{rmk:1770e7b}
Even without assuming the irreducibility of $\Theta_{E_7}$ over $\Q$, 
this statement can be verified by applying 
Corollary~\ref{cor:A}, taking $a:=r_7$ and $b:=\Theta_{E_7}$ 
as polynomials in $x_2$. 
For instance, with $p=5$, one may consider the sampling point 
$(x_0,\ldots,x_6)=(0,1,x_2,1,0,0,0)$.
\end{Remark}

\begin{Thm}\label{thm:2506}
Fix $\mb x\in \Xi_{E_7}$.
\begin{enumerate}
\item 
If $r_7(\mb x)\ne 0$, then
$\delta_7(v(\mb x))\ne 0$ holds
and there exists $u\in \R\setminus \{0\}$ 
such that $h(u,v(\mb x),\mb x_3)=\mb x$.
In particular,
\begin{equation}\label{eq:e7r6}
\Xi_{E_7}\setminus \mc Z(r_7)\subset h(\R^6).
\end{equation}
\item 
Suppose that $\mb x\in h(\R^6)\cap \mc Z(r_7)$.
Then  $\delta_7(v(\mb x))=0$ and $\mc H(\mb x)=0$ hold.
Moreover, 
$
\mc E_2:=h(\R^6)\cap \mc Z(r_7)
$
has dimension less than $6$.
\end{enumerate}
\end{Thm}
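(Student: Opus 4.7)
The plan is to parallel the proof of Theorem~\ref{thm:1954} by deriving an algebraic identity that isolates the role of $r_7(\mb x)$ modulo $\delta_7(v)$. Setting $Q(v):=v^3+x_6v^2+x_5v+x_1$ and $p(v):=3x_4v^2+2x_3v+x_2$, the relations $3u^2=-Q(v)$ (from $h_1=x_1$) and $u\,\delta_7(v)=-p(v)$ (from $h_2=x_2$) yield, upon squaring the second and eliminating $u^2$, the polynomial identity
\begin{equation*}
A^{\mb x}(v) \;=\; Q(v)\,\delta_7(v)^2 \;+\; 3\,p(v)^2,
\end{equation*}
which I would verify by expansion against~\eqref{eq:E7-A}. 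Since $g_2^{\mb x}\equiv -p\pmod{\delta_7}$, the definition of $r_7(\mb x)=\tfrac13\op{Res}_v(g_2^{\mb x},\delta_7)$ implies, up to a nonzero scalar, that $r_7(\mb x)=\op{Res}_v(p,\delta_7)$; in particular, $r_7(\mb x)=0$ if and only if $p$ and $\delta_7$ share a root in~$\C$.

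For~(1), suppose $\delta_7(v(\mb x))=0$. The identity then forces $0=A^{\mb x}(v(\mb x))=3\,p(v(\mb x))^2$, so $p(v(\mb x))=0$; hence $\delta_7$ and $p$ share the root $v(\mb x)$, giving $r_7(\mb x)=0$, contrary to hypothesis. Therefore $\delta_7(v(\mb x))\ne 0$, and the second assertion of Proposition~\ref{prop:1322e7} produces a real $u$ with $\hat h(u,v(\mb x),\mb x_3)=\mb x$, establishing $\Xi_{E_7}\setminus \mc Z(r_7)\subset h(\R^6)$.

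For~(2), fix $\mb x\in h(\R^6)\cap \mc Z(r_7)$. It suffices to treat the case $\mb x\in \Xi_{E_7}$, since the portion of $\mc E_2$ lying in $\mc Z(\mc R)\setminus \Xi_{E_7}=\mc E_1$ already has dimension less than~$6$ by Theorem~\ref{thm:E7}. Since $r_7(\mb x)=0$, $\delta_7$ and $p$ share a root $v_0\in\C$. If $v_0$ were non-real, then its conjugate would also be a common root (by reality of the coefficients), forcing $p=x_4\,\delta_7$ and hence $x_3=x_4 x_6$, $x_2=x_4 x_5$, i.e., $\mb x\in \mc T_1\cap \mc T_2\subset \mc T\subset \mc E_1$, contradicting $\mb x\in \Xi_{E_7}$; so $v_0\in \R$. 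A parallel elimination of $u$ from $h_0=x_0$ gives
\begin{equation*}
B^{\mb x}(v) \;=\; 2\,p(v)\,Q(v)\;-\;3\,\delta_7(v)\bigl(v\,p(v)-x_3v^2-2x_4v^3+x_0\bigr),
\end{equation*}
from which $B^{\mb x}(v_0)=2\,p(v_0)\,Q(v_0)=0$. Combined with $A^{\mb x}(v_0)=0$ and the uniqueness in Theorem~\ref{thm:E7}, this yields $v(\mb x)=v_0$, so $\delta_7(v(\mb x))=0$. Since $\mb x\in h(\R^6)$, Proposition~\ref{prop:1742E7} furnishes $u\in \R$ solving $g_0^{\mb x}(u,v(\mb x))=g_1^{\mb x}(u,v(\mb x))=0$ (the equation $g_2^{\mb x}=0$ being automatic from $p(v_0)=\delta_7(v_0)=0$). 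Setting
\begin{equation*}
\mc H(\mb x):=\op{Res}_v\bigl(\op{Res}_u(g_0^{\mb x},g_1^{\mb x}),\,\delta_7\bigr),
\end{equation*}
this compatibility forces $\mc H(\mb x)=0$. Since $g_0^{\mb x}$, $g_1^{\mb x}$ and $\delta_7$ are independent of $x_2$ whereas $r_7$ is a nontrivial quadratic in $x_2$, Lemma~\ref{lem:B0} gives $\dim_H(\mc Z(r_7)\cap \mc Z(\mc H))<6$; combined with the bound on $\mc E_1$, the inclusion
\begin{equation*}
\mc E_2 \;\subset\; \bigl(\mc Z(r_7)\cap \mc Z(\mc H)\bigr)\,\cup\,\mc E_1
\end{equation*}
yields $\dim_H \mc E_2<6$.

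The main obstacle is the verification of the two polynomial identities for $A^{\mb x}$ and $B^{\mb x}$ from the explicit forms given in Section~6; although these are routine expansions, the coefficients must match precisely. A secondary point is ensuring that $\mc H(\mb x)$ is a nontrivial polynomial genuinely independent of~$x_2$, so that Lemma~\ref{lem:B0} applies with full force; this is most easily seen by inspecting the construction of $\mc H$, since $g_0^{\mb x}$, $g_1^{\mb x}$, and $\delta_7$ introduce no $x_2$ at any stage of the two successive resultants.
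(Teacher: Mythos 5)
Your proof is correct and follows the same overall strategy as the paper: reduce everything to the behaviour of the data modulo $\delta_7$, use membership in $\Xi_{E_7}$ (hence $\mb x\notin\mc T$) to exclude the degenerate cases, and invoke Lemma~\ref{lem:B0} for the dimension bound. The difference is in execution. Where the paper certifies $\operatorname{Res}_v(A^{\mb x},\delta_7)=3^7 r_7(\mb x)^2$ and the division identity \eqref{eq:2527} by \textit{Mathematica}, you derive the explicit identities $A^{\mb x}=Q\,\delta_7^2+3p^2$ and $B^{\mb x}=2pQ-3\delta_7\,W$ directly from the elimination of $u$; I spot-checked several coefficients against \eqref{eq:E7-A} and the displayed $B^{\mb x}$, and they match, so part (1) goes through exactly as you say. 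Your treatment of (2) is in fact more complete than the paper's: the paper never defines $\mc H$ for $E_7$, whereas your double resultant $\operatorname{Res}_v(\operatorname{Res}_u(g_0^{\mb x},g_1^{\mb x}),\delta_7)$ is a legitimate substitute (the leading coefficients $2$ and $3$ in $u$, and $3$ in $v$, are constants, so the vanishing criteria for resultants apply without caveats), and your reduction of the case $\mb x\notin\Xi_{E_7}$ to $\mc E_1$, the reality argument for $v_0$ via $\mc T_1\cap\mc T_2$, and the identification $v(\mb x)=v_0$ through the uniqueness in Theorem~\ref{thm:E7} are all sound and fill genuine gaps in the paper's two-line argument.

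The one point you have not actually closed is the nontriviality of $\mc H$: Lemma~\ref{lem:B0} needs $b=\mc H$ to be a \emph{nonzero} polynomial, and your closing remark only establishes that $\mc H$ is independent of $x_2$, not that $\mc H\not\equiv 0$. This is settled by a single numerical evaluation (or by noting that $\delta_7$ cannot divide $\operatorname{Res}_u(g_0^{\mb x},g_1^{\mb x})$ identically, e.g.\ by specializing $x_5=x_6=0$ so that $\delta_7=3v^2$ and checking the constant term in $v$), but as written it is an unverified hypothesis of the lemma you apply. It is a minor, easily repaired omission — and one the paper itself glosses over equally — but it should be stated as a required check rather than folded into the $x_2$-independence observation.
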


\begin{proof}
All symbolic computations below were carried out with
\textit{Mathematica}.
We obtain
\begin{align}
&\op{Res}_v(A^{\mb x},\delta_7)=3^7 r_7(\mb x)^2 \label{eq:7a}, \\
&\op{Res}_v(B^{\mb x},\delta_7)=12 r_7(\mb x) \label{eq:7b}
\Big(27 x_1^2-18 x_1 x_5 x_6+4 x_1x_6^3+4 x_5^3-x_5^2 x_6^2\Big).
\end{align}
Thus if $\delta_7(v(\mb x))=0$ and $A^{\mb x}(v(\mb x))=0$, 
then $r_7(\mb x)=0$, a contradiction.
Hence $\delta_7(v(\mb x))\neq 0$, and the rest of (1) 
follows from Proposition~\ref{prop:1322e7}.

For (2), assume $\mb x\in h(\R^6)\cap \mc Z(r_7)$.
Dividing $g_2^{\mb x}(u,v)$ by $\delta_7(v)$ gives
\begin{equation}\label{eq:2527}
g_2^{\mb x}(u,v)=-(u+x_4)\delta_7(v)-2\Delta(\mb x)v-x_2+x_4 x_5,
\end{equation}
where $\Delta(\mb x):=x_3-x_4 x_6$.
Set
$
\hat v(\mb x):=({-x_2+x_4 x_5})/({2 \Delta(\mb x)}).
$
Then, substituting $v=\hat v(\mb x)$ into $\delta_7(v)$,
we obtain
\begin{equation}\label{eq:2592}
\delta_7(\hat v(\mb x))=\frac{r_7(\mb x)}{4\Delta(\mb x)^2}=0,
\end{equation}
since $r_7(\mb x)=0$.
So $g_2^{\mb x}(u,\hat v(\mb x))=0$ for any $u$.
Using Lemma~\ref{lem:B0} 
and Proposition~\ref{prop:BB}, one concludes $\dim_H(\mc E_2)<6$.
\end{proof}

\begin{Corollary}\label{cor:2024e7}
Fix $\mb x\in \mc Z_\C(\Theta_{E_7})$.
If $r_7(\mb x)\ne 0$, then
there exist $u,v\in \C$ 
such that $h(u,v,\mb x_3)=\mb x$.
\end{Corollary}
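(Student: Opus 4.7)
The plan is to run the same three-step argument used for the $E_6$ analogue (Corollary~\ref{cor:2024}). Since the assumption is $\Theta_{E_7}(\mb x)=0$ with $r_7(\mb x)\ne 0$, and since Proposition~\ref{thm:E7main} gives $\mc R(\mb x)=r_7(\mb x)^2\,\Theta_{E_7}(\mb x)$, the hypothesis immediately yields $\mc R(\mb x)=\op{Res}_v(A^{\mb x},B^{\mb x})=0$. Because the leading coefficient of $A^{\mb x}(v)$ as a polynomial in $v$ is the nonzero constant $9$ (see \eqref{eq:E7-A}), the vanishing of this resultant produces some $v\in \C$ with $A^{\mb x}(v)=B^{\mb x}(v)=0$.

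Next I would exclude the bad case $\delta_7(v)=0$. This is exactly where the identity \eqref{eq:7a},
\[
 \op{Res}_v(A^{\mb x},\delta_7)=3^7 r_7(\mb x)^2,
\]
enters: since $r_7(\mb x)\ne 0$, the polynomials $A^{\mb x}$ and $\delta_7$ share no common complex root, so $\delta_7(v)\ne 0$ at the $v$ produced above.

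Finally, I would feed $v$ into the complex version of Proposition~\ref{prop:1322e7}. Having $A^{\mb x}(v)=B^{\mb x}(v)=0$ together with $\delta_7(v)\ne 0$ yields, by that proposition, an element $(u,v,\mb x_3)\in \C^6$ with $h^{\C}(u,v,\mb x_3)=\mb x$; concretely $u$ is recovered from \eqref{eq:1364b}, and $A^{\mb x}(v)=0$ then forces the quadratic relation \eqref{eq:1364a} to hold. This gives the desired $(u,v)\in \C^2$.

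No step is a genuine obstacle here, since every ingredient has already been set up in the preceding discussion: the divisibility by $r_7^2$ (Proposition~\ref{thm:E7main}), the resultant identity \eqref{eq:7a}, and the complex version of Proposition~\ref{prop:1322e7}. The only point that deserves some care is noting that Proposition~\ref{prop:1322e7} is stated for $\K\in\{\R,\C\}$, so applying it over $\C$ is legitimate. Structurally, the argument is word-for-word parallel to Corollary~\ref{cor:2024}, with $(r_6,\delta_6)$ replaced by $(r_7,\delta_7)$ and with \eqref{eq:7a} playing the role that the explicit factor $3 r_6(\mb x)^2/(4x_5^6)$ played in \eqref{eq:1948a} for the $E_6$ case.
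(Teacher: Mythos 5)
Your proof is correct and follows essentially the same route as the paper: factor $\mc R=r_7^2\,\Theta_{E_7}$ to get a common complex root $v$ of $A^{\mb x}$ and $B^{\mb x}$, use $\op{Res}_v(A^{\mb x},\delta_7)=3^7 r_7(\mb x)^2\ne 0$ to rule out $\delta_7(v)=0$, and recover $u$ from \eqref{eq:1364b} with \eqref{eq:1364a} forced by $A^{\mb x}(v)=0$. Your version is in fact slightly more explicit than the paper's, which asserts $\delta_7(v)\ne 0$ without citing \eqref{eq:7a} directly.
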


\begin{proof}
The condition $\Theta_{E_7}(\mb x)=0$ implies
the existence of $v\in \C$ satisfying
$A^{\mb x}(v)=B^{\mb x}(v)=0$.
Since $r_7(\mb x)\ne 0$, we have $\delta_7(v)\ne 0$.  
Then we can set $u\in \C$ by
\eqref{eq:1364b}.
Since $A^{\mb x}(v)=0$, this $u$ also satisfy
\eqref{eq:1364a}.
This implies the existence of $u\in \C$ 
such that $h(u,v,\mb x_3)=\mb x$.
\end{proof}

\begin{Thm}\label{Thm:M7}
The polynomial $\Theta_{E_7}(\mb x)$ is a  
main-analytic function on $h(\R^6)$.
\end{Thm}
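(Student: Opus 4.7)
The plan is to mirror the proof of the corresponding statement for $E_6$ almost verbatim, since every $E_7$-specific input it needs is already in place: Proposition~\ref{thm:E7main} (factorization $\mc R = r_7^{2}\,\Theta_{E_7}$ with $\Theta_{E_7}$ irreducible over $\Q$), Theorem~\ref{thm:2506}(1) (inclusion $\Xi_{E_7}\setminus \mc Z(r_7) \subset h(\R^6)$), Theorem~\ref{thm:2506}(2) (the dimension bound $\dim_H \mc E_2 < 6$, where $\mc E_2 := h(\R^6)\cap \mc Z(r_7)$), Corollary~\ref{eq:1770e7} ($\dim_H \mc E_0 < 6$), and Theorem~\ref{thm:E7} ($\dim_H \mc E_1 < 6$).

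First I would bound the excess $\mc Z(\Theta_{E_7})\setminus h(\R^6)$. Proposition~\ref{thm:E7main} gives the decomposition $\mc Z(\mc R) = \mc Z(r_7)\cup \mc Z(\Theta_{E_7})$, so combined with Theorem~\ref{thm:2506}(1) one obtains
\[
\mc Z(\Theta_{E_7})\setminus (\mc E_0 \cup \mc E_1)
= \mc Z(\mc R)\setminus (\mc E_1 \cup \mc Z(r_7))
= \Xi_{E_7}\setminus \mc Z(r_7) \subset h(\R^6).
\]
Corollary~\ref{eq:1770e7} together with Theorem~\ref{thm:E7} then yields $\dim_H(\mc Z(\Theta_{E_7})\setminus h(\R^6)) < 6$, which is half of what is needed.

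Next I would handle the reverse containment $h(\R^6) \subset \mc Z(\Theta_{E_7})$. From Corollary~\ref{eq:1366e7}, $h(\R^6) \subset \mc Z(\mc R) = \mc Z(\Theta_{E_7})\cup \mc E_2$, so it remains to absorb the exceptional set $\mc E_2$ into $\mc Z(\Theta_{E_7})$. The argument is a density/closure one, paralleling the $E_6$ case: since $h$ is a $6$-dimensional real analytic map (Proposition~\ref{prop:1404}) and $\dim_H \mc E_2 < 6$, Proposition~\ref{prop:HDIM} forces $h^{-1}(\mc E_2)$ to have empty interior in $\R^6$ (otherwise the image would itself have Hausdorff dimension $6$). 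Hence any preimage of a point $\mb x \in \mc E_2$ can be approached by a sequence $\{(u_n,v_n,\mb z_n)\} \subset \R^6 \setminus h^{-1}(\mc E_2)$, whose images lie in $\mc Z(\Theta_{E_7})$; closedness of $\mc Z(\Theta_{E_7})$ then forces $\mb x \in \mc Z(\Theta_{E_7})$. Combining the two directions shows that $\Theta_{E_7}$ is a main-analytic function of $h(\R^6)$.

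The chief difficulty has already been concentrated in Theorem~\ref{thm:2506}(2) and Proposition~\ref{thm:E7main}; here one merely assembles the pieces. The only potential subtlety while transcribing the $E_6$ argument is to verify that no step silently relied on a feature unique to the $E_6$ weights or dimension count, but a direct check confirms that each appeal (to properness of $\hat h$, to the dimension count for $\mc E_1$, and to the closedness of the zero set of the polynomial $\Theta_{E_7}$) carries over without modification.
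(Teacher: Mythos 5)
Your proposal is correct and follows the paper's own proof essentially verbatim: the same decomposition $\mc Z(\mc R)=\mc Z(r_7)\cup\mc Z(\Theta_{E_7})$, the same use of Theorem~\ref{thm:2506}, Corollary~\ref{eq:1770e7} and Theorem~\ref{thm:E7} to bound $\mc Z(\Theta_{E_7})\setminus h(\R^6)$, and the same empty-interior/limiting argument to show $\mc E_2\subset\mc Z(\Theta_{E_7})$. You have merely written out in full the limiting step that the paper compresses into ``as in the $E_6$ case,'' and you do so correctly.
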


This provides an alternative proof of the main-analyticity of
$h(\R^6)$, without invoking Appendix~D.

\begin{proof}
Since $\mathcal{Z}(\mathcal{R})=\mathcal{Z}(r_7)\cup \mathcal{Z}(\Theta_{E_7})$, 
\eqref{eq:e7r6}
yields
\[
\mathcal{Z}(\Theta_{E_7})\setminus (\mathcal{E}_0\cup \mathcal{E}_1)
=
\mathcal{Z}(\mathcal{R})\setminus (\mathcal{E}_1\cup \mathcal{Z}(r_7))
=\Xi_{E_7}\setminus \mathcal{Z}(r_7)\subset h(\mathbb{R}^6).
\]
Hence
$
\dim_H \bigl(\mathcal{Z}(\Theta_{E_7})\setminus h(\mathbb{R}^6)\bigr)<6.
$
On the other hand,
by (2) of Theorem~\ref{thm:2506},
\begin{equation}\label{eq:2023c}
h(\R^6)\subset \mc Z(\Theta_{E_7})\cup \mc E_2,\qquad
\dim_H(\mc E_2)<6.
\end{equation}
If $h^{-1}(\mathcal{E}_2)$ had an interior point, the image would have dimension $6$, a contradiction. A limiting argument as in the $E_6$ case then shows $\mc E_2\subset \mc Z(\Theta_{E_7})$, proving the claim.
\end{proof}

\begin{Rmk}\label{rmk:e7}
By Proposition~\ref{prop:1322e7}, we have
$h^\C(\C^6\setminus \mc Z_\C(\delta_7))\subset \mc Z_\C(\Theta_{E_7})$.
Since $h^\C$ is proper, we obtain
\begin{equation}\label{eq:E7hc}
h^\C(\C^6)
=h^\C\bigl(\,\overline{\C^6\setminus \mc Z_\C(\delta_7)}\,\bigr)
=\overline{h^\C(\C^6\setminus \mc Z_\C(\delta_7))}
\subset \mc Z_\C(\Theta_{E_7}).
\end{equation}

Let $\mc L:=\mc Z_\C(\Theta_{E_7})\cap \mc Z_\C(r_7)$. 
Since the complex dimension of $\mc L$ is less than~$6$
by the same reason as in the proof of 
Corollary~\ref{eq:1770e7},
$\mc L$ has no interior point.
On the other hand,
Corollary~\ref{cor:2024e7}
implies that
$\mc Z_\C(\Theta_{E_7})\setminus \mc L\subset h^\C(\C^6)$.
By the properness of $h^\C$, it follows that
\begin{equation}\label{eq:E7hc2}
\mc Z_\C(\Theta_{E_7})=\overline{\mc Z_\C(\Theta_{E_7})\setminus \mc L}
\subset \overline{h^\C(\C^6)}=h^\C(\C^6).
\end{equation}
Combining \eqref{eq:E7hc} and \eqref{eq:E7hc2}, we have
$
h^\C(\C^{6})=\mathcal{Z}_\C(\Theta_{E_7})\subset\C^{7}.
$

Since $h^\C$ is an immersion on an open dense subset of $\C^{6}$,
the hypersurface $h^\C(\C^{6})$ is smooth on an open dense subset. 
As $\Theta_{E_7}$ is
an irreducible polynomial over~$\Q$ (cf.~Proposition~\ref{thm:E7main}), 
it coincides with the defining polynomial of
$h^\C(\C^{6})$.
In particular, $\Theta_{E_7}$ coincides with
the discriminant polynomial of type $E_7$,  
which is known to be irreducible over $\C$.

Assigning weights $(9,7,6,5,3,4,2)$ to $(x_0,\ldots,x_6)$, we obtain:
\begin{itemize}
\item $r_7(\mb x)$ is a weighted homogeneous polynomial of degree $14$,
\item $\op{Res}_v(A^\mb x,B^\mb x)$ is weighted homogeneous of degree $91$,
\item the main-analytic function $\Theta_{E_7}$ is weighted homogeneous of degree $63$.
\end{itemize}
\end{Rmk}

\section{Singularities of type $E_8$}

Since this is parallel to Sections~5 and 6, 
we only record the  main points.
The standard map
$
h:=h_{E_8}:\R^{7}\to \R^{8}
$
of an
$E_8$-singular point
is defined by
$$
h(u,v,\mb x_3):=
\Big(
h_0(u,v,\mb x_3),
h_1(u,v,\mb x_3),
h_2(u,v,\mb x_3),\mb x_3
\Big),
$$
where 
$\mathbf{x}_3 := (x_3, x_4, x_5, x_6, x_7)$ is the subvector 
of $\mathbf{x} \in \mathbb{R}^8$,
and
\begin{align}\label{eq:1636a}
h_0(u,v,\mb x_3)&:=
2 u^3 + 4 v^5 + x_3v^2 + 2x_4 v^3 
+ u v\, \delta_8(v), \\
\label{eq:1636b}
h_1(u,v,\mb x_3)&:=-3 u^2 - x_5 v - x_6 v^2 - x_7 v^3, \\
\label{eq:1636c}
h_2(u,v,\mb x_3)&:=
-5 v^4 - 2x_3v- 3x_4v^2 -u \, \delta_8(v), \\
\label{eq:1687a}
\delta_8(v)&:=3x_7 v^2+2 x_6 v+x_5.
\end{align}

\begin{Prop}
The image of the standard map $h$ coincides with
the set
\begin{align*}
\mc W_{E_8}&:=\Big\{\mb x:=(x_0,\ldots,x_{7})\in \R^8\,;\, 
\text{there exists $(u,v)\in \R^2$ such that}
\\
& \phantom{aaaaaaaaaaaaaaaaaaaaaa}
\text{$F(u,v,\mb x)=F_u(u,v,\mb x)=F_v(u,v,\mb x)=0$}\Big\},
\end{align*}
where 
\begin{align}
\label{eq:E1660}
F(u,v,\mb x)&:= 
u^3 + v^5 + x_7 u v^3 + x_6 u v^2 + x_5 u v \\
&\phantom{aaaaaaaaaa}+ x_4 v^3 + x_3 v^2 + x_2 v + 
 x_1 u + x_0. \nonumber
\end{align}
\end{Prop}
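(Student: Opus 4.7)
The plan is to follow precisely the template established in Propositions~\ref{prop:e6} and~\ref{prop:e7}, now specialized to the $E_8$-generating function $F$ given by \eqref{eq:E1660}. The proposition is purely an elimination-of-variables statement, and no dimension estimates or resultant computations enter at this point; those will come later, in the analogues of Sections~5 and~6.

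First I would compute $F_u$ and $F_v$ directly from \eqref{eq:E1660}. A routine differentiation yields
\begin{align*}
F_u(u,v,\mb x)&=3u^2+x_7v^3+x_6v^2+x_5v+x_1,\\
F_v(u,v,\mb x)&=5v^4+u\,\delta_8(v)+3x_4v^2+2x_3v+x_2,
\end{align*}
where the coefficient of $u$ in $F_v$ is precisely $3x_7v^2+2x_6v+x_5=\delta_8(v)$ as in \eqref{eq:1687a}. Solving $F_u=0$ for $x_1$ reproduces formula \eqref{eq:1636b} for $h_1(u,v,\mb x_3)$, and solving $F_v=0$ for $x_2$ reproduces formula \eqref{eq:1636c} for $h_2(u,v,\mb x_3)$. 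This handles the components $h_1$ and $h_2$.

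Next I would substitute these expressions for $x_1$ and $x_2$ back into the equation $F=0$ and solve for $x_0$. After eliminating $x_1$, the terms $u^3$, $x_1u$, and $uv(x_7v^2+x_6v+x_5)$ collapse to $-2u^3$; after eliminating $x_2$, the terms $v^5$, $x_2v$, together with $x_4v^3$ and $x_3v^2$, collapse to produce the block $-4v^5-uv\,\delta_8(v)-2x_4v^3-x_3v^2$. Assembling these one obtains
\[
F=x_0-\Big(2u^3+4v^5+x_3v^2+2x_4v^3+uv\,\delta_8(v)\Big),
\]
which is exactly $x_0-h_0(u,v,\mb x_3)$ from \eqref{eq:1636a}. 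Hence $F=F_u=F_v=0$ is equivalent to $(x_0,x_1,x_2)=(h_0,h_1,h_2)(u,v,\mb x_3)$, and both inclusions $\mc W_{E_8}\subset h(\R^7)$ and $h(\R^7)\subset\mc W_{E_8}$ follow simultaneously by reading the equivalence in both directions.

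There is no genuine obstacle in this step; the argument is essentially bookkeeping. The only point worth flagging—already noted for $E_6$ and $E_7$—is that the polynomial $\delta_8$ appears naturally as the coefficient of $u$ in $F_v$, and this identification is what permits the subsequent resultant-based elimination of $u$ (via $u=-(2x_3v+3x_4v^2+5v^4)/\delta_8(v)$ when $\delta_8(v)\ne 0$) that will be needed to isolate the characteristic polynomials $A^{\mb x}$ and $B^{\mb x}$ of type $E_8$.
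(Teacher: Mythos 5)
Your proposal is correct and follows exactly the elimination argument the paper uses for the analogous Propositions~\ref{prop:D644} and~\ref{prop:e6} (the paper omits the $E_8$ proof as "parallel to Sections 5 and 6"); your computations of $F_u$, $F_v$, and the substitution back into $F=0$ all check out against \eqref{eq:1636a}--\eqref{eq:1687a}.
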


The following assertion can be easily verified:

\begin{Prop}\label{prop:1719}
The map $h$ is a $7$-dimensional real analytic map.
\end{Prop}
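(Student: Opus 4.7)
The plan is to follow verbatim the template used in the proofs of Propositions~\ref{prop:1c}, \ref{prop:1cd}, \ref{prop:1101}, and \ref{prop:1404}. Writing $h=(h_0,h_1,h_2,x_3,\ldots,x_7)$, I note that the Jacobian matrix $dh$ has a block structure: the last five rows, corresponding to the output components $x_3,\ldots,x_7$, have a zero block in the first two columns (since $x_3,\ldots,x_7$ do not depend on $u,v$) and a $5\times 5$ identity block in the last five columns. Consequently, if I delete the row of $dh$ corresponding to $h_1$, the resulting $7\times 7$ minor has determinant equal, up to sign, to that of the $2\times 2$ submatrix
\begin{equation*}
\mc M := \pmt{(h_0)_u & (h_0)_v \\ (h_2)_u & (h_2)_v}.
\end{equation*}
Thus it suffices to exhibit one point of $\R^7$ at which $\mc M$ has rank $2$.

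To do this, I would specialize to the locus $x_3=x_4=x_5=x_6=x_7=0$. At this locus, both $\delta_8(v)$ and its $v$-derivative vanish identically by \eqref{eq:1687a}, so direct differentiation of \eqref{eq:1636a} and \eqref{eq:1636c} gives
\begin{equation*}
\mc M = \pmt{6u^2 & 20v^4 \\ 0 & -20v^3},
\end{equation*}
whose determinant equals $-120\,u^2 v^3$, which is nonzero whenever $u\neq 0$ and $v\neq 0$. Hence $dh$ has rank $7$ at any such point; by real-analyticity, the subset of $\R^7$ on which $\op{rank}\,dh=7$ is a nonempty open, and therefore dense, subset of $\R^7$. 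This yields the claim that $h$ is a $7$-dimensional real analytic map.

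There is no substantive obstacle in this argument — it is essentially a line-by-line adaptation of the corresponding $D_k$, $E_6$, and $E_7$ proofs, with only the explicit form of $h_0$, $h_2$, and $\delta$ adjusted to the $E_8$ data. The only verification worth flagging is that setting $x_5=x_6=x_7=0$ simultaneously kills $\delta_8$ and $\delta_8'$, allowing the $2\times 2$ minor to collapse to an easily invertible matrix; this is immediate from \eqref{eq:1687a}.
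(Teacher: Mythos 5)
Your proof is correct and follows exactly the template the paper itself uses for the analogous statements (notably the proof of Proposition~\ref{prop:1cd} for type $D$), which is precisely what the paper intends when it states that Proposition~\ref{prop:1719} "can be easily verified." The block-triangular reduction to the $2\times 2$ minor $\mc M$, the specialization $x_3=\cdots=x_7=0$ killing $\delta_8$ and $\delta_8'$, and the resulting determinant $-120\,u^2v^3\neq 0$ for $uv\neq 0$ are all verified correctly.
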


Set $\K=\R$ or $\C$ and define $\hat h$ as in the previous sections:

\begin{Prop}\label{prop:E1673}
The map $\hat h:\K^{7}\to \K^8$  is proper.
Moreover, for each $\mb x\in \K^8$, 
the inverse image $\hat h^{-1}(\mb x)$ is finite.
Furthermore, $\hat h^{-1}(\mb 0)=\{o\}$.
\end{Prop}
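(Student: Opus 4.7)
The plan is to follow the same scheme used for $E_6$ (Proposition~\ref{prop:E1057}) and $E_7$ (Proposition~\ref{prop:E7-proper}). First, $\hat h^{-1}(\mb 0)=\{o\}$ is immediate from the definitions \eqref{eq:1636a}--\eqref{eq:1687a}: substituting $\mb x=\mb 0$ forces $x_3=x_4=x_5=x_6=x_7=0$, hence $\delta_8\equiv 0$, and $h_1=0$ gives $u=0$, after which $h_0=0$ and $h_2=0$ reduce to $4v^5=0$ and $-5v^4=0$, yielding $v=0$.

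For the finiteness and properness statements, fix $\mb x\in \K^8$ with $\hat h(u,v,\mb x_3)=\mb x$. From \eqref{eq:1636b} we obtain
\begin{equation}\label{eq:E8u2}
u^2=\tfrac13\bigl(-x_1-x_5 v-x_6 v^2-x_7 v^3\bigr).
\end{equation}
I would split into two cases. If $\delta_8(v)=0$, then \eqref{eq:1636c} becomes $-5v^4-3x_4 v^2-2x_3 v-x_2=0$, a polynomial of degree four with nonzero leading coefficient $-5$, so $v$ lies in a finite set whose Cauchy bound depends only on the $x_i$; \eqref{eq:E8u2} then leaves finitely many possibilities for $u$. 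If $\delta_8(v)\ne 0$, then \eqref{eq:1636c} gives
\begin{equation}\label{eq:E8uexpl}
u=\frac{-5v^4-3x_4v^2-2x_3v-x_2}{\delta_8(v)},
\end{equation}
and combining this with \eqref{eq:E8u2} yields a polynomial equation
\[
A^{\mb x}(v):=3\bigl(5v^4+3x_4v^2+2x_3v+x_2\bigr)^2
+\bigl(x_1+x_5v+x_6v^2+x_7v^3\bigr)\delta_8(v)^2=0,
\]
whose leading term (coming from $3\cdot(5v^4)^2$) is $75\,v^8$, a nonzero constant. Hence the possibilities for $v$ are finite, and \eqref{eq:E8uexpl} gives a unique $u$ for each such $v$. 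This establishes finiteness of $\hat h^{-1}(\mb x)$.

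For properness, let $\mc K\subset\K^8$ be compact, and let $C>0$ be an upper bound for the absolute values of the coordinates of points in $\mc K$. When $\delta_8(v)=0$, Remark~\ref{rmk:Cauchy} applied to the normalized quartic $v^4+\tfrac35 x_4v^2+\tfrac25 x_3v+\tfrac15(x_2)=0$ gives an explicit bound on $|v|$ in terms of $C$, and \eqref{eq:E8u2} bounds $|u|$ accordingly. When $\delta_8(v)\ne 0$, the monic polynomial $A^{\mb x}(v)/75$ has coefficients that are polynomials in $x_1,\dots,x_7$, hence bounded on $\mc K$; Remark~\ref{rmk:Cauchy} again bounds $|v|$, and then \eqref{eq:E8uexpl} bounds $|u|$, since $|\delta_8(v)|$ is bounded away from zero on the preimage of each point where it is nonzero (here one should argue by taking limits: any accumulation point $(u_\infty,v_\infty,\mb x_3^\infty)$ of a sequence in $\hat h^{-1}(\mc K)$ must still satisfy the original equations, so the preimage is bounded). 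Closedness of $\hat h^{-1}(\mc K)$ follows from continuity of $\hat h$, so $\hat h^{-1}(\mc K)$ is compact.

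The main obstacle is the uniform boundedness of $|u|$ in the case $\delta_8(v)\ne 0$ when $\delta_8(v)$ may become arbitrarily small along a sequence in $\hat h^{-1}(\mc K)$; the cleanest way around this is to bound $|v|$ first via $A^{\mb x}(v)$, and then to use \eqref{eq:E8u2} rather than \eqref{eq:E8uexpl} to bound $|u|$, since \eqref{eq:E8u2} involves only polynomial expressions in the already-bounded quantities $v$ and $x_1,x_5,x_6,x_7$. This avoids the denominator $\delta_8(v)$ entirely and mirrors the bounding strategy used in the proof of Proposition~\ref{prop:E7-proper}.
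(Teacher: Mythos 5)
Your proposal is correct and follows essentially the same route as the paper: the same case split on $\delta_8(v)=0$ versus $\delta_8(v)\ne 0$, the same elimination of $u$ producing the degree-$8$ polynomial with leading term $75v^8$, and Cauchy's bound for both finiteness and properness. Your final paragraph correctly identifies the only delicate point (bounding $|u|$ when $\delta_8(v)$ is small) and resolves it the same way the paper does in the $E_6$ and $E_7$ cases, namely by bounding $|v|$ first and then using the $u^2$-equation rather than the expression with denominator $\delta_8(v)$; this fills in the detail that the paper leaves to ``imitating the case of $E_6$ and $E_7$.''
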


\begin{proof}
The assertion $\hat h^{-1}(\mb 0)=\{o\}$ is immediate.
Fix $\mb x=(x_0,\ldots,x_7)\in \K^8$ and write 
$x_i=h_i(u,v,x_3,\ldots,x_7)$ for $i=0,1,2$.
Recall  that
\begin{align}
x_1&=h_1(u,v,\mb x_5)= -3u^2 - x_5 v - x_6 v^2 - x_7 v^3, \label{eq:E8-1}\\
x_2&=h_2(u,v,\mb x_5)= -5v^4 - 2x_3 v - 3x_4 v^2 - u\,\delta_8(v), \label{eq:E8-2}
\end{align}
where $\delta_8(v):=x_5+2x_6 v+3x_7 v^2$.

\smallskip
If $\delta_8(v)=0$, then \eqref{eq:E8-2} reduces to
$
x_2=-5v^4-2x_3 v - 3x_4 v^2,
$
so $v$ is a root of a quartic equation over $\K$. Hence the possibilities 
for $v$ are finite, and then
\eqref{eq:E8-1} shows that the possibilities for $u$ are also finite.

Assume now $\delta_8(v)\ne 0$. From \eqref{eq:E8-1} and \eqref{eq:E8-2} we obtain
\begin{align}
u^2&=\frac{-x_1 - x_5 v - x_6 v^2 - x_7 v^3}{3}, \label{eq:E8-u2}\qquad
u=\frac{-5 v^4-3 x_4 v^2-2 x_3 v - x_2}{\delta_8(v)}. 
\end{align}
Eliminating $u$ between \eqref{eq:E8-u2} yields a polynomial equation
$A^{\mb x}(v)=0$, where
\begin{align}\label{eq:E8-A}
A^{\mb x}(v):={}&
75 v^8+9 x_7^3v^7+3 \bigl(30 x_4+7 x_6 x_7^2\bigr)v^6 \\
&+\bigl(60 x_3+x_7 \bigl(15 x_5 x_7+16 x_6^2\bigr)\bigr)v^5  \notag\\
&+\bigl(9 x_1 x_7^2+30 x_2+27 x_4^2+22 x_5 x_6 x_7+4 x_6^3\bigr)v^4  \notag\\
&+\bigl(12 x_1x_6 x_7+36 x_3 x_4+7 x_5^2 x_7+8 x_5 x_6^2\bigr)v^3  \notag\\
&+\bigl(6 x_1 x_5 x_7+4 x_1 x_6^2+18 x_2 x_4+12 x_3^2+5 x_5^2 x_6\bigr)v^2  \notag\\
&+\bigl(4 x_1 x_5 x_6+12 x_2 x_3+x_5^3\bigr)v+x_1 x_5^2+3 x_2^2. \notag
\end{align}
Since the leading term of $A^{\mb x}(v)$ is $75 v^8$, $v$ has 
finitely many possibilities. So, \eqref{eq:E8-u2} gives 
finitely many possibilities for $u$.
Therefore, for every $\mb x\in \K^8$, the inverse image 
$\hat h^{-1}(\mb x)$ is finite.

The properness of the map $\hat h$ can be proved
by imitating the case of
$E_6$ and $E_7$.
\end{proof}

Substituting 
\eqref{eq:E8-u2}
into the equation $h_0(v,\mb x_3)=x_0$
(cf. \eqref{eq:1636a}), we obtain
the equation  $B^{\mb x}(v)=0$, where
\begin{align*}
B^{\mb x}(v)&:=
x_7 v^7+4 x_6 v^6+(7 x_5-3 x_4 x_7)v^5 +5 (2 x_1-x_3 x_7)v^4 \\
&\phantom{aaaaa}+(-7 x_2 x_7-2 x_3 x_6+3 x_4 x_5)v^3 \\
&\phantom{aaaaaaaa}+(-9 x_0 x_7+6 x_1 x_4
-4 x_2 x_6+x_3 x_5)v^2  \\
&\phantom{aaaaaaaaaa}+(-6 x_0 x_6+4 x_1 x_3-x_2 x_5)v-3 x_0 x_5+2 x_1 x_2.
\end{align*}
As in the cases of $E_6$ and $E_7$,
we set
$$
\mc R(\mb x):=\op{Res}_v(A^{\mb x},B^{\mb x}),\qquad
\mc S(\mb x):=\op{Psc}_v(A^{\mb x},B^{\mb x}).
$$
With $g^{\mb x}_i(u,v):=x_i-h_i(u,v,\mb x_3)$ $(i=0,1,2)$, 
like as the cases of $E_6$ and $E_7$, we have the following two assertions:

\begin{Proposition}\label{prop:1742E8}
For $\mb x\in \K^8$ and $u,v\in \K$,
$\hat h(u,v,\mb x_3)=\mb x$ is equivalent to 
$g_i^{\mb x}(u,v)=0$ $(i=0,1,2)$.
\end{Proposition}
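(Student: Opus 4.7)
The plan is to verify this statement by direct unpacking of definitions, exactly mirroring the proofs of the analogous Propositions~\ref{prop:1742} (for $E_6$) and \ref{prop:1742E7} (for $E_7$). Recall that
\[
\hat h(u,v,\mb x_3) = \bigl(h_0(u,v,\mb x_3),\, h_1(u,v,\mb x_3),\, h_2(u,v,\mb x_3),\, \mb x_3\bigr),
\]
and write $\mb x = (x_0, x_1, x_2, \mb x_3)$. The first step is to observe that the last five components of $\hat h(u,v,\mb x_3)$ coincide with $\mb x_3$ by construction, so the vector equation $\hat h(u,v,\mb x_3) = \mb x$ collapses to the three scalar equations $h_i(u,v,\mb x_3) = x_i$ for $i = 0,1,2$.

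The second step is to note that, by definition, $g_i^{\mb x}(u,v) = x_i - h_i(u,v,\mb x_3)$, so each equation $h_i(u,v,\mb x_3) = x_i$ is just the vanishing condition $g_i^{\mb x}(u,v) = 0$. Combining these two observations yields the equivalence in both directions, over $\R$ as well as over $\C$ (the argument only uses componentwise equality, so the choice $\K=\R$ or $\K=\C$ is irrelevant).

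There is essentially no obstacle here: the assertion is a tautology that repackages the fibre equations of $\hat h$ into the form used in the subsequent resultant analysis, just as in the $E_6$ and $E_7$ cases. The substantive work in this section comes afterwards, when these three equations are combined via \eqref{eq:E8-u2} and \eqref{eq:E8-A} to produce the characteristic pair $A^{\mb x}(v)$ and $B^{\mb x}(v)$, and when the corresponding resultants $\mc R$ and $\mc S$ are analyzed; the present proposition is merely the bookkeeping device that guarantees the eliminations are reversible modulo the auxiliary factor $\delta_8(v)$.
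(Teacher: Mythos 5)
Your proposal is correct and matches the paper exactly: the paper defines $g^{\mb x}_i(u,v):=x_i-h_i(u,v,\mb x_3)$ and states the proposition as holding ``by definition,'' since the last components of $\hat h$ are identically $\mb x_3$ and the remaining three components give $h_i=x_i$, i.e.\ $g_i^{\mb x}=0$. Nothing further is needed.
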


\begin{Prop}\label{prop:1322e8}
Fix $\mb x\in \K^8$. If $\mb x\in \hat h(\K^7)$, 
then there exists $v\in \K$ with $A^{\mb x}(v)=B^{\mb x}(v)=0$.
Conversely, the existence of $v\in \K$ satisfying $\delta_8(v)\ne 0$
implies $\mb x\in \hat h(\K^7)$.
\end{Prop}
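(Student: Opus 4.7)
The plan is to mirror the proofs of Propositions~\ref{prop:1366} and~\ref{prop:1322e7} for the $E_6$ and $E_7$ cases, applying the unified framework from Appendix~C. The key setup is identical: fix $\mb x\in \K^8$ and take $G_i(u,v):=g_i^{\mb x}(u,v)$ for $i=0,1,2$ together with $\delta:=\delta_8$. The first task is to verify that the polynomials $\alpha(v)$ and $\beta(v)$ constructed by \eqref{eq:C4} and \eqref{eq:C6} of Appendix~C specialize, under this choice, to the explicit $A^{\mb x}(v)$ of \eqref{eq:E8-A} and to the $B^{\mb x}(v)$ displayed just before Proposition~\ref{prop:1742E8}.

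For the forward direction, suppose $\mb x=\hat h(u,v,\mb x_3)$ for some $(u,v,\mb x_3)\in \K^7$. By Proposition~\ref{prop:1742E8} this is equivalent to $g_i^{\mb x}(u,v)=0$ ($i=0,1,2$), so Lemma~\ref{LemC1} of Appendix~C yields $\alpha(v)=\beta(v)=0$. Via the identification described above, this gives $A^{\mb x}(v)=B^{\mb x}(v)=0$, as desired.

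For the converse, assume that there exists $v\in \K$ with $A^{\mb x}(v)=B^{\mb x}(v)=0$ and $\delta_8(v)\ne 0$. I would then define
\[
u:=\frac{-5v^4-3x_4v^2-2x_3 v-x_2}{\delta_8(v)},
\]
which is the rational expression for $u$ in \eqref{eq:E8-u2} that holds whenever $\delta_8(v)\ne 0$. Applying Proposition~\ref{thm:Cm} of Appendix~C to this $(u,v)$ yields $g_i^{\mb x}(u,v)=0$ for $i=0,1,2$, and Proposition~\ref{prop:1742E8} then gives $\hat h(u,v,\mb x_3)=\mb x$, hence $\mb x\in \hat h(\K^7)$.

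The main obstacle I anticipate is the symbolic verification that the Appendix~C polynomials $\alpha(v)$ and $\beta(v)$ really reduce to our explicit $A^{\mb x}(v)$ and $B^{\mb x}(v)$ in the $E_8$ setting. Since $A^{\mb x}(v)$ has degree $8$ in $v$ with coefficients involving several monomials in $x_1,\ldots,x_7$, the matching is computationally heavy and I would carry it out with \emph{Mathematica}, exactly as was done for the analogous $E_6$ and $E_7$ identifications. Once that identification is recorded, the remainder of the argument is a direct transposition of the $E_6$ and $E_7$ proofs, and no new ideas are required.
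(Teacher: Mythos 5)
Your proposal is correct and matches the paper's own (largely implicit) argument: the paper simply asserts Proposition~\ref{prop:1322e8} ``like as the cases of $E_6$ and $E_7$,'' i.e.\ via the Appendix~C framework with $G_i:=g_i^{\mb x}$, $\delta:=\delta_8$, Lemma~\ref{LemC1} for the forward direction and Proposition~\ref{thm:Cm} plus the explicit $u$ from \eqref{eq:E8-u2} for the converse, exactly as you describe. You also correctly read the converse as requiring $A^{\mb x}(v)=B^{\mb x}(v)=0$ in addition to $\delta_8(v)\ne 0$ (the hypothesis is elided in the paper's statement), and the identification of $\alpha,\beta$ with $A^{\mb x},B^{\mb x}$ is the same routine verification the paper performs for $E_6$ and $E_7$.
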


\begin{Corollary}\label{eq:1366e8}
The image $h(\R^7)$ $($resp. $h^\C(\C^7))$
is a subset of $\mc Z(\mc R)$ $($resp. $\mc Z_\C(\mc R))$.
\end{Corollary}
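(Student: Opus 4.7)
The plan is to derive this corollary as an essentially formal consequence of Proposition~\ref{prop:1322e8} together with the basic vanishing property of the resultant recalled in Appendix~A (and already used in the analogous Corollaries~\ref{eq:1366} and \ref{eq:1366e7} for $E_6$ and $E_7$). First I would fix an arbitrary $\mb x\in\hat h(\K^7)$, where $\K=\R$ or $\C$ and $\hat h$ is defined as in \eqref{eq:hatH}. Proposition~\ref{prop:1322e8} then produces some $v\in\K$ at which the two one-variable polynomials $A^{\mb x}(v)$ and $B^{\mb x}(v)$ vanish simultaneously.

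The next step is to translate this common root into the vanishing of $\mc R(\mb x)=\op{Res}_v(A^{\mb x},B^{\mb x})$. Inspecting \eqref{eq:E8-A}, the leading coefficient of $A^{\mb x}(v)$ as a polynomial in $v$ is the nonzero constant $75$, so $A^{\mb x}$ has full formal degree $8$, and the Sylvester determinant defining the resultant is unambiguous. The standard criterion from Appendix~A then asserts that $\mc R(\mb x)=0$ whenever $A^{\mb x}$ and $B^{\mb x}$ share a root in $\bar\K$; applying this to the $v\in\K\subseteq\bar\K$ supplied above gives $\mc R(\mb x)=0$. Thus $\mb x\in\mc Z(\mc R)$ when $\K=\R$ and $\mb x\in\mc Z_\C(\mc R)$ when $\K=\C$, as required.

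There is really no obstacle to overcome in this step: both ingredients are already in place, and the argument is uniform in $\K$ because neither Proposition~\ref{prop:1322e8} nor the resultant criterion depends on the ground field. The substance of the work lies in Proposition~\ref{prop:1322e8} (whose forward direction builds on the explicit elimination leading to $A^{\mb x}$ and $B^{\mb x}$), not in this corollary. The genuine analytic difficulty, namely deciding when the converse inclusion $\mc Z(\mc R)\subset h(\R^7)$ holds up to a set of smaller dimension, is postponed to the analogues of Theorems~\ref{thm:E6} and~\ref{thm:2506} in the $E_8$-setting and plays no role in proving the present corollary.
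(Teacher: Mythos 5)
Your proposal is correct and follows exactly the route the paper intends: the forward direction of Proposition~\ref{prop:1322e8} supplies a common root of $A^{\mb x}$ and $B^{\mb x}$, and since the leading coefficient of $A^{\mb x}$ is the nonzero constant $75$, the resultant criterion of Appendix~A forces $\mc R(\mb x)=0$. The paper treats this as immediate (as in Corollaries~\ref{eq:1366} and~\ref{eq:1366e7}), and your extra remark verifying the nonvanishing leading coefficient is a correct and worthwhile detail.
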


The resultant
$
r_8(\mb x):=\op{Res}_v(g^{\mb x}_2,\delta_8)
$
can be computed as
\begin{align*}
r_8(\mb x)
&= 25\,x_5^4
 \;-\; 90\,x_4 x_7\,x_5^3 \\
&\quad
 + \bigl(60\,x_4 x_6^2 + 81\,x_4^2 x_7^2 + 180\,x_3 x_6 x_7 
+ 90\,x_2 x_7^2\bigr)\,x_5^2 \\
&\quad
 + \bigl(-80\,x_3 x_6^3 - 240\,x_2 x_6^2 x_7 - 108\,x_3 x_4 x_6 x_7^2
          + 108\,x_3^2 x_7^3 - 162\,x_2 x_4 x_7^3\bigr)\,x_5 \\
&\quad\quad
 + x_2\bigl(80\,x_6^4 + 108\,x_4 x_6^2 x_7^2
 - 108\,x_3 x_6 x_7^3 + 81\,x_2 x_7^4\bigr).
\end{align*}

Set  $B^{\mb x}_0:=x_0 (dB^{\mb x}/dx_0)$ (cf. \eqref{eq:B0}).
Then we have
\[
B^{\mb x}_0(v):=-3 x_0 x_5-6 x_0 x_6v-9x_0 x_7 v^2.
\]
By Remark \ref{rmk:1430},
$\op{Res}_v(A^{\mb x},B^{\mb x})$ and
$\op{Psc}_v(A^{\mb x},B^{\mb x})$ are non-zero constant 
multiples of
$\op{Res}_v(A^{\mb x},B^{\mb x}_0)$ and
$\op{Psc}_v(A^{\mb x},B^{\mb x}_0)$,
respectively.
Furthermore, we have
\begin{align*}
\op{Res}_v(A^{\mb x},B^{\mb x}_0)
  &= 3^{10} r_8(\mb x)^2 x_0^8 
     + \text{(lower-order terms in $x_0$)}, \\
\op{Psc}_v(A^{\mb x},B^{\mb x}_0)
  &= -2^{2} 3^8 p_1(\mb x) p_2(\mb x) x_0^7
     + \text{(lower-order terms in $x_0$)},
\end{align*}
where
\begin{align*}
p_1(\mb x) 
&= 20\,x_6^3 
  + (27 x_4 x_7^2 - 30 x_5 x_7)\,x_6 
  - 27 x_3 x_7^3,
\\
p_2(\mb x) 
&= 40\,x_6^4 
  + (54 x_4 x_7^2 - 120 x_5 x_7)\,x_6^2 
  - 54 x_3 x_7^3\,x_6 \\ \nonumber
&\phantom{aaaaaaaaaaaaaaaaaaaa} + \bigl(45 x_5^2 x_7^2 
- 81 x_4 x_5 x_7^3 + 81 x_2 x_7^4\bigr).
\end{align*}
It holds that
\[
\op{Res}_{x_6}(r_8,p_i)\Big|_{x_5=1,x_7=0}\equiv 1 \pmod{7}\quad \qquad (i=1,2).
\]
Hence each
\[
\mc T_i:=\{\mb x\in\R^8\ ;\ r_8(\mb x)=p_i(\mb x)=0\}\qquad \quad (i=1,2)
\]
has $\dim_H(\mc T_i)<7$, so 
$\mc T:=\mc T_1\cup\mc T_2$ also satisfies $\dim_H(\mc T)<7$.

\begin{Thm}\label{thm:E8}
$\mc E_1:=\mc T\cup \bigl(\mc Z(\mc R)\cap \mc Z(\mc S)\bigr)$ 
has $\dim_H(\mc E_1)<7$, and 
$\Xi_{E_8}:=\mc Z(\mc R)\setminus \mc E_1$ is open in $\mc Z(\mc R)$. 
Moreover, for each $\mb x\in \Xi_{E_8}$ there is a 
unique $v(\mb x)\in\R$ with $A^{\mb x}(v(\mb x))=B^{\mb x}(v(\mb x))=0$.
\end{Thm}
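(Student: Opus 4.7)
The plan is to follow the proofs of Theorem~\ref{thm:E6} and Theorem~\ref{thm:E7} closely, with the only nontrivial adaptation being a new sampling computation appropriate to the $E_8$ setting. The argument decomposes into three independent steps: establishing $\dim_H \mathcal Z(\mathcal R)=7$, bounding the dimension of $\mathcal L:=\{\mb x\in\R^8\setminus\mathcal T\mid \mathcal R(\mb x)=\mathcal S(\mb x)=0\}$, and deducing uniqueness and reality of $v(\mb x)$.

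First I would show $\dim_H \mathcal Z(\mathcal R)=7$. Corollary~\ref{eq:1366e8} gives $h(\R^7)\subset \mathcal Z(\mathcal R)$, and Proposition~\ref{prop:1719} together with Proposition~\ref{prop:HDIM} yields $\dim_H h(\R^7)=7$. Since $\mathcal R$ is a nontrivial polynomial on $\R^8$, the reverse inequality is immediate. Openness of $\Xi_{E_8}$ in $\mathcal Z(\mathcal R)$ is then automatic once $\mathcal E_1$ is observed to be a finite union of real-algebraic sets (which it manifestly is, since $\mathcal T=\mathcal T_1\cup\mathcal T_2$ and $\mathcal Z(\mathcal R)\cap \mathcal Z(\mathcal S)$ are all closed).

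Next I would bound $\dim_H\mathcal L$. By Remark~\ref{rmk:1430}, the leading coefficients of $\mathcal R$ and $\mathcal S$ as polynomials in $x_0$ are nonzero scalar multiples of $r_8(\mb x)^2$ and $p_1(\mb x)p_2(\mb x)$, respectively, so their simultaneous vanishing locus is exactly $\{r_8=0\}\cap(\{p_1=0\}\cup\{p_2=0\})=\mathcal T_1\cup\mathcal T_2=\mathcal T$. This is precisely the setting of Corollary~\ref{cor:A}: the conclusion $\dim_H \mathcal L<7$ will follow once a single sampling point $\xi\in\R^7$ is exhibited at which $\op{Res}_{x_0}(\mathcal R,\mathcal S)\big|_{(x_1,\ldots,x_7)=\xi}$ is nonzero modulo some small prime $p$. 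The main obstacle will be this sampling: in the $E_8$ case the polynomial $\op{Res}_{x_0}(\mathcal R,\mathcal S)$ is vastly larger than the corresponding objects in the $E_6$ and $E_7$ cases, so a sparse choice of $\xi$ (with many zero entries, chosen to avoid $\mathcal T$ while keeping Sylvester matrices small) combined with a well-chosen prime $p\in\{5,7,11,13\}$ will be essential to make the \emph{Mathematica} computation tractable. Once such a $\xi$ is secured, combining $\dim_H \mathcal L<7$ with the already-established $\dim_H \mathcal T<7$ yields $\dim_H \mathcal E_1<7$.

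For the last assertion, I would argue exactly as in Theorem~\ref{thm:E6}. Fix $\mb x\in\Xi_{E_8}$. Since $\mathcal R(\mb x)=0$ and the leading coefficient $75$ of $A^{\mb x}$ in $v$ from \eqref{eq:E8-A} never vanishes, the polynomials $A^{\mb x}$ and $B^{\mb x}$ share at least one common root $v\in\C$. Two distinct common roots would force $\mathcal S(\mb x)=0$, contradicting $\mb x\notin\mathcal E_1$, so $v$ is unique. A non-real $v$ would have its complex conjugate $\bar v$ as a second common root, since $A^{\mb x}$ and $B^{\mb x}$ have real coefficients, and this would again force $\mathcal S(\mb x)=0$. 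Therefore $v(\mb x)\in\R$.
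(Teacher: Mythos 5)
Your proposal follows the paper's proof essentially verbatim: the same decomposition into $\dim_H\mathcal Z(\mathcal R)=7$ via Corollary~\ref{eq:1366e8} and Proposition~\ref{prop:1719}, the same reduction of $\mathcal L$ to Corollary~\ref{cor:A} after identifying the common vanishing locus of the leading $x_0$-coefficients with $\mathcal T$, and the same conjugate-root argument for the uniqueness and reality of $v(\mb x)$. The only ingredient you leave unexecuted is the explicit sampling computation, which the paper supplies by taking $\xi_1=(x_0,1,0,0,0,1,0,1)$ and $p=7$ and verifying with \textit{Mathematica} that $\op{Res}_{x_0}\bigl(\mathcal R(\xi_1),\mathcal S(\xi_1)\bigr)\equiv 1\pmod 7$.
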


\begin{proof}
Set
$
\xi_1:=(x_0,1,0,0,0,1,0,1).
$
Using \textit{Mathematica}, we have 
\[
\mc R(\xi_1)\equiv
3 \bigl(5 + 3 x_0 + 6 x_0^2 + x_0^4\bigr)\bigl(4 + 3 x_0^2 
+ 2 x_0^3 + x_0^4\bigr) \pmod{7}
\]
and
\[
\mc S(\xi_1)\equiv
3+2 x_0+3 x_0^2+x_0^3+3 x_0^4+4 x_0^5+x_0^6 \pmod{7}.
\]
Moreover, the resultant of the two polynomials 
$\mc R(\xi_1)$ and $\mc S(\xi_1)$ in $x_0$
is equal to $1$ modulo $7$.
Thus, by Corollary \ref{cor:A} in the appendix, we conclude that
\[
\mc L:=
\{
\mb x\in \R^8\setminus \mc T \; ; \;
\mc R(\mb x)=\mc S(\mb x)=0
\}
\]
has dimension less than $7$.
Hence
$
Z:=\{
\mb x\in \R^8 \; ; \; 
\mc R(\mb x)=\mc S(\mb x)=0
\}
\subset \mc L\cup \mc T,
$
which implies that $Z$ has dimension less than $7$.
Thus the first claim follows, and
$\Xi_{E_8}$ is an open subset of $\mc Z(\mc R)$.
Fix $\mb x\in \Xi_{E_8}$.
Then, as in the cases of $E_6$ and $E_7$,
there exists a unique $v\in \R$ such that
$
A^{\mb x}(v)=B^{\mb x}(v)=0.
$
We denote this unique solution by $v(\mb x)$. 
This is exactly the $v(\mb x)$ claimed in the theorem.
\end{proof}

As in Corollaries \ref{2079} and \ref{2079b}, we obtain the following:

\begin{Corollary}\label{2079c}
$h(\R^7)$ is a global main-analytic set of $\R^8$.
\end{Corollary}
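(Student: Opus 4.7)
The plan is to follow the proof of Corollary~\ref{2079} verbatim, with the dimension parameter set to $m=7$. I would set $U:=h^{-1}(\R^8\setminus \mathcal{E}_1)$ with $\mathcal{E}_1$ as in Theorem~\ref{thm:E8}. By Corollary~\ref{eq:1366e8}, any $\mathbf{x}\in h(\R^7)\setminus \mathcal{E}_1$ lies in $\Xi_{E_8}=\mc Z(\mc R)\setminus \mathcal{E}_1$, so the last statement of Theorem~\ref{thm:E8} produces a unique real common root $v(\mathbf{x})$ of $A^{\mathbf{x}}(v)$ and $B^{\mathbf{x}}(v)$. Together with Proposition~\ref{prop:1742E8}, this pins down the fiber of $h$ over $\mathbf{x}$, so $h|_U$ is injective, exactly as in the $E_6$ and $E_7$ arguments.

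To apply the criterion of Appendix~D (Proposition~\ref{thm:Coste-main-analytic-J}) it then suffices to show that $U$ is dense in $\R^7$. If not, $h^{-1}(\mathcal{E}_1)$ would contain a nonempty open subset and would have Hausdorff dimension $7$. Since $h$ is a $7$-dimensional real analytic map by Proposition~\ref{prop:1719}, Proposition~\ref{prop:HDIM} applied to this open subset gives
\[
  \dim_H h\bigl(h^{-1}(\mathcal{E}_1)\bigr)=7,
\]
and hence $\dim_H \mathcal{E}_1\ge 7$. This contradicts the first conclusion of Theorem~\ref{thm:E8}, which states $\dim_H \mathcal{E}_1<7$.

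I do not anticipate any significant obstacle: every step is a direct transcription of the arguments used in Corollaries~\ref{2079} and~\ref{2079b}, with the only change being that the explicit sets $\mathcal{T}_1,\mathcal{T}_2,\mathcal{E}_1$ and polynomials $r_8,p_1,p_2$ for $E_8$ replace their $E_6$ and $E_7$ counterparts. All the nontrivial content has already been absorbed into Theorem~\ref{thm:E8}, so the proof reduces to a short injectivity-plus-density argument followed by an invocation of Appendix~D.
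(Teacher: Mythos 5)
Your proposal is correct and is essentially the paper's own argument: the paper proves this corollary by simply invoking the same reasoning as in Corollaries~\ref{2079} and~\ref{2079b}, which is exactly the injectivity-on-$U$ plus density argument you transcribe, followed by Proposition~\ref{thm:Coste-main-analytic-J}. The one point left implicit in both your write-up and the paper is that uniqueness of $v(\mathbf{x})$ also forces uniqueness of $u$ (via $g_1^{\mathbf{x}}=g_2^{\mathbf{x}}=0$ when $\delta_8(v(\mathbf{x}))\neq0$, and via $g_0^{\mathbf{x}}=g_1^{\mathbf{x}}=0$ otherwise), but this is a routine check and does not affect the validity of the argument.
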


We next show the following:

\begin{Thm}\label{thm:E8main}
The polynomial $\mc R(\mb x)$ is divisible by $r_8(\mb x)^2$, and 
$
\Theta_{E_8}(\mb x):={\mc R(\mb x)}/{r_8(\mb x)^2}
$
is irreducible over $\Q$.
\end{Thm}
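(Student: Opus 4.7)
The plan is to proceed in complete analogy with the $E_6$ and $E_7$ cases (Propositions~\ref{thm:E6main} and~\ref{thm:E7main}), verifying both assertions by explicit symbolic computation in \textit{Mathematica}. The only essential difference is that in the $E_8$ setting $A^{\mb x}(v)$ has degree $8$ and $B^{\mb x}(v)$ has degree $7$ in $v$, so the Sylvester matrix defining $\mc R(\mb x)$ is $15\times 15$ and the resulting polynomial in the eight variables $x_0,\ldots,x_7$ is substantially larger than in the earlier cases. Controlling the size of the intermediate expressions will be the main practical difficulty.

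For the divisibility, I would first compute $\mc R(\mb x)=\op{Res}_v(A^{\mb x},B^{\mb x})$ from the Sylvester determinant and then verify that the multivariate polynomial remainder of $\mc R$ modulo $r_8^2$ is zero. As an efficiency measure, this division is best carried out in two stages (first by $r_8$, then again by $r_8$), which keeps each intermediate object significantly smaller than a single division by $r_8^2$. A conceptually cleaner variant, parallel to the arguments leading to Theorems~\ref{thm:1954}(2) and~\ref{thm:2506}(2), is to write $A^{\mb x}(v)=P_1^{\mb x}(v)\delta_8(v)+R_1^{\mb x}(v)$ and $B^{\mb x}(v)=P_2^{\mb x}(v)\delta_8(v)+R_2^{\mb x}(v)$, check that each residue $R_i^{\mb x}$ has $r_8(\mb x)$ as a factor, and deduce that along $\mc Z(r_8)$ both $A^{\mb x}$ and $B^{\mb x}$ share the same root of $\delta_8$ with multiplicity at least two, so that $\mc R$ vanishes to order at least $2$ on $\mc Z(r_8)$. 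Combined with the irreducibility of $r_8$ over $\Q$, which is immediate from \op{Factor}, this yields $r_8^2\mid \mc R$.

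For the irreducibility of $\Theta_{E_8}=\mc R/r_8^2$, the natural first step is to apply \textit{Mathematica}'s \op{Factor} to $\mc R/r_8^2$ and confirm that the output is a single irreducible factor. To avoid sole reliance on the built-in implementation, I would also carry out a modular verification in the spirit of Remark~\ref{rmk:1770e7b}: specialize $(x_3,\ldots,x_7)$ to integer values that preserve the leading coefficient of $\Theta_{E_8}$ as a polynomial in $x_0$, reduce modulo a small prime $p$ for which this leading coefficient is a unit, and verify the irreducibility of the specialization over $\mathbb{F}_p$; since irreducibility of a generic specialization modulo $p$ implies irreducibility over $\Q$, this provides a certificate independent of the built-in factorization. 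The main obstacle throughout is computational cost: the resultant expansion in the $E_8$ case is markedly heavier than in $E_6$ or $E_7$, but it remains within reach of current computer algebra systems, and the weighted homogeneity of $A^{\mb x}$ and $B^{\mb x}$ (with the weights to be recorded in the concluding remark) provides useful sanity checks on the resulting polynomial of degree~$120$.
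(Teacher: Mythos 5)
Your primary route --- compute $\mathcal R=\operatorname{Res}_v(A^{\mb x},B^{\mb x})$ symbolically, verify that polynomial division by $r_8^2$ leaves zero remainder, and confirm irreducibility of the quotient with \textit{Mathematica} --- is exactly what the paper does (the paper merely adds that the output is about 13~MB and took roughly 7.5 hours), so on that front the proposal is fine. However, your ``conceptually cleaner variant'' contains a claim that is false: the residue $R_2^{\mb x}(v)$ of $B^{\mb x}(v)$ modulo $\delta_8(v)$ is \emph{not} divisible by $r_8(\mb x)$. Indeed, if both coefficients of the linear polynomial $R_2^{\mb x}$ were divisible by $r_8$, then $\operatorname{Res}_v(B^{\mb x},\delta_8)$, which is quadratic in those coefficients, would be divisible by $r_8^2$; but the paper's proof of Theorem~\ref{thm:3170} records $\operatorname{Res}_v(B^{\mb x},\delta_8)=4x_7\,J(\mb x)\,r_8(\mb x)$, with only a single factor of $r_8$ (the same asymmetry already appears in \eqref{eq:1948a}--\eqref{eq:1948b} for $E_6$ and in \eqref{eq:7a}--\eqref{eq:7b} for $E_7$: $A^{\mb x}$ carries $r^2$ but $B^{\mb x}$ only $r$). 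Consequently the asserted mechanism --- that $A^{\mb x}$ and $B^{\mb x}$ share a root of $\delta_8$ ``with multiplicity at least two'' along $\mathcal Z(r_8)$, forcing $\mathcal R$ to vanish to order two there --- does not go through as stated, and the order-two divisibility really does have to be checked by the explicit division. A smaller caveat: your modular irreducibility certificate needs an extra step, since a hypothetical factor of $\Theta_{E_8}$ lying in $\Q[x_3,\dots,x_7]$ would specialize to a unit and be invisible to a specialization of $(x_3,\dots,x_7)$ followed by reduction mod~$p$; you must separately exclude such factors (e.g.\ by a second specialization retaining different variables, or by checking primitivity over $\Q[x_3,\dots,x_7]$). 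Neither issue affects the main argument, which coincides with the paper's.
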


\begin{proof}
By using \textit{Mathematica}, we computed
$
\Theta_{E_8}(\mb x)=\operatorname{Res}_v\!\left(A^{\mb x}(v),\,B^{\mb x}(v)\right).
$
The resulting expression occupies about 13 megabytes when stored as a file, 
and the computation itself required approximately 7.5 hours on a standard laptop.  
From this explicit output we verified that $\Theta_{E_8}(\mb x)$ 
is divisible by $r_8(\mb x)^2$.  
Moreover, the quotient $\Theta_{E_8}(\mb x)/r_8(\mb x)^2$ 
is checked to be irreducible over $\Q$.
\end{proof}

\begin{Corollary}\label{eq:1770e8}
$\dim_H\bigl(\mc Z(r_8)\cap \mc Z(\Theta_{E_8})\bigr)<7$.
\end{Corollary}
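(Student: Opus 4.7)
The proof mirrors Corollary \ref{eq:1770} of the $E_6$ case. Two ingredients suffice: the irreducibility of $\Theta_{E_8}$ over $\Q$, already in hand by Theorem \ref{thm:E8main}, and the fact that $\Theta_{E_8}$ does not divide $r_8$. Once these are combined, Proposition \ref{Prop:B0} in the appendix yields the desired dimension bound.

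For the non-divisibility, I would compare $x_0$-degrees. Inspection of the explicit formula for $r_8(\mb x)$ shows that $r_8$ does not involve $x_0$ at all. On the other hand, the leading coefficient of $\mc R(\mb x)$ as a polynomial in $x_0$ is $3^{10} r_8(\mb x)^2 x_0^8$, so $\mc R$ has $x_0$-degree exactly $8$, and consequently $\Theta_{E_8} = \mc R/r_8^2$ has $x_0$-degree $8$ as well. In particular, $\Theta_{E_8}$ cannot be a factor of $r_8$, and combined with its irreducibility this gives $\gcd(\Theta_{E_8}, r_8) = 1$ in $\Q[\mb x]$. Proposition \ref{Prop:B0} then produces $\dim_H(\mc Z(r_8) \cap \mc Z(\Theta_{E_8})) \le 6 < 7$.

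In case Proposition \ref{Prop:B0} in the appendix is stated with irreducibility of \emph{both} factors (as invoked in the $E_6$ case), one auxiliary step is needed: either verify directly that $r_8$ is irreducible over $\Q$ (feasible since $r_8$, viewed as a quartic in $x_5$, is of manageable size), or, following the alternative indicated in Remark \ref{rmk:1770e7b}, bypass irreducibility entirely by applying Corollary \ref{cor:A} to $r_8$ and $\Theta_{E_8}$ regarded as polynomials in a chosen variable (for instance $x_2$). In the latter route one exhibits an integer sampling point and a small prime $p$ for which the relevant resultant has a nonzero reduction modulo $p$, which certifies the dimension drop without ever manipulating the full expression of $\Theta_{E_8}$.

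The main obstacle is practical rather than conceptual: the polynomial $\Theta_{E_8}$ is enormous (about 13 MB of symbolic data, per Theorem \ref{thm:E8main}), so any verification that genuinely touches its coefficients must be designed to avoid direct expansion. The modular route is effective precisely because reduction to $\F_p$ can be performed coefficient-by-coefficient during resultant evaluation, keeping intermediate data small.
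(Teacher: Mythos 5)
Your proposal is correct and follows essentially the paper's own (implicit) route: establish that $r_8$ and $\Theta_{E_8}$ have no nontrivial common factor and invoke Proposition~\ref{Prop:B0}, with the modular argument of Remark~\ref{rmk:2765} as the stated fallback. Your $x_0$-degree comparison ($\deg_{x_0}r_8=0$ while $\deg_{x_0}\Theta_{E_8}=8$) is a clean way to certify coprimality from the irreducibility of $\Theta_{E_8}$ alone, neatly avoiding the question of whether $r_8$ itself is irreducible, which the paper never addresses for the $E_8$ case.
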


\begin{Remark}\label{rmk:2765}
Even without assuming the irreducibility of $\Theta_{E_8}$, 
this statement can be verified by applying 
Corollary~\ref{cor:A}, taking $a:=r_8$ and $b:=\Theta_{E_8}$ as polynomials in $x_5$. 
For instance, with $p=7$, one may consider the sampling point 
$(x_0,\ldots,x_7)=(0,1,0,1,1,x_5,1,0)$.
\end{Remark}

The following assertion is an analogue of Theorems~\ref{thm:1954}
and \ref{thm:2506}.

\begin{Thm}\label{thm:3170}
Fix $\mb x\in \Xi_{E_8}$.
\begin{enumerate}
\item 
If $r_8(\mb x)\ne 0$, then
$\delta_8(v(\mb x))\ne 0$ and there exists $u\in \R\setminus \{0\}$ 
such that $h(u,v(\mb x),\mb x_3)=\mb x$. In particular,
$
\Xi_{E_8}\setminus \mc Z(r_8)\subset h(\R^7).
$
\item 
If $\mb x\in h(\R^7)\cap \mc Z(r_8)$, then $\delta_8(v(\mb x))=0$ and $\mc H(\mb x)=0$.
Moreover, 
$\mc E_2:=h(\R^7)\cap \mc Z(r_8)$ has $\dim_H(\mc E_2)<7$.
\end{enumerate}
\end{Thm}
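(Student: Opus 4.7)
The plan is to mirror the structure of the proofs of Theorem~\ref{thm:1954} and Theorem~\ref{thm:2506}, with $(r_6,\delta_6)$ and $(r_7,\delta_7)$ replaced by $(r_8,\delta_8)$. The strategy reduces to verifying three ingredients: (i) a resultant identity exhibiting $r_8(\mathbf{x})^2$ as a factor of $\operatorname{Res}_v(A^{\mathbf{x}},\delta_8)$ and $r_8(\mathbf{x})$ as a factor of $\operatorname{Res}_v(B^{\mathbf{x}},\delta_8)$, (ii) an explicit definition of $\mathcal{H}(\mathbf{x})$ capturing the remaining constraints when $\delta_8(v(\mathbf{x}))=0$, and (iii) a dimension estimate obtained from Lemma~\ref{lem:B0} applied to a well-chosen variable.

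For part~(1), I would use \textit{Mathematica} to compute $\operatorname{Res}_v(A^{\mathbf{x}},\delta_8)$ and $\operatorname{Res}_v(B^{\mathbf{x}},\delta_8)$, and check, by analogy with \eqref{eq:7a} and \eqref{eq:7b}, that $r_8(\mathbf{x})$ appears with multiplicity at least two in the former and at least one in the latter. Once these identities are in hand, $A^{\mathbf{x}}(v(\mathbf{x}))=0$ together with $\delta_8(v(\mathbf{x}))=0$ forces $r_8(\mathbf{x})=0$. Therefore $r_8(\mathbf{x})\ne 0$ yields $\delta_8(v(\mathbf{x}))\ne 0$, and Proposition~\ref{prop:1322e8} produces a $u\in\mathbb{R}$ with $h(u,v(\mathbf{x}),\mathbf{x}_3)=\mathbf{x}$ through formula \eqref{eq:E8-u2}. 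The nonvanishing of $u$ is checked by inspecting \eqref{eq:E8-u2}, as in the $E_6$ and $E_7$ arguments.

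For part~(2), given $\mathbf{x}\in h(\mathbb{R}^7)\cap \mathcal{Z}(r_8)$, I would combine the uniqueness statement of Theorem~\ref{thm:E8} with the resultant identity from step~(1) to conclude $\delta_8(v(\mathbf{x}))=0$: if not, $v(\mathbf{x})$ would produce a real pre-image through \eqref{eq:E8-u2} incompatible with $r_8(\mathbf{x})=0$ in the same way as in the proof of Theorem~\ref{thm:2506}. Substituting $v(\mathbf{x})$ into $g_0^{\mathbf{x}}(u,v)$ and $g_1^{\mathbf{x}}(u,v)$ gives two polynomials in $u$ with coefficients in $\mathbb{R}[\mathbf{x}]$, and I would define $\mathcal{H}(\mathbf{x})$ as their resultant in $u$ (after clearing denominators by a suitable power of the leading coefficient $3x_7$ of $\delta_8$ to avoid rational coefficients). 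By Proposition~\ref{prop:1742E8}, the existence of a real pre-image forces these two polynomials to share a root, giving $\mathcal{H}(\mathbf{x})=0$. For the dimension bound, I would verify via \textit{Mathematica} that $\mathcal{H}(\mathbf{x})$ does not involve some particular variable $x_i$ while $r_8(\mathbf{x})$ does; Lemma~\ref{lem:B0} and Proposition~\ref{prop:BB} then yield $\dim_H(\mathcal{Z}(r_8)\cap \mathcal{Z}(\mathcal{H}))<7$. Combined with Theorem~\ref{thm:E8} and the inclusion
\[
\mathcal{E}_2\subset \bigl(\mathcal{Z}(r_8)\cap \mathcal{Z}(\mathcal{H})\bigr)\cup \mathcal{E}_1,
\]
this gives $\dim_H(\mathcal{E}_2)<7$.

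The main obstacle is the sheer size of the symbolic computations in the $E_8$ case: the authors already report that $\Theta_{E_8}$ alone requires about 7.5~hours of computation and 13~MB of storage, so verifying the factorization of $\operatorname{Res}_v(A^{\mathbf{x}},\delta_8)$ by $r_8^2$ and extracting an explicit $\mathcal{H}(\mathbf{x})$ may push the limits of direct computation. Modular reduction or well-chosen specializations of variables, in the spirit of Remark~\ref{rmk:2765}, will almost certainly be required. A secondary subtlety is that the leading coefficient of $\delta_8$ in $v$ is $3x_7$ rather than a nonzero constant, so the sub-locus $\{x_7=0\}$ requires a separate argument; however, it has codimension one and can be absorbed into the exceptional sets in the final dimension count.
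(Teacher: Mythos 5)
Your proposal follows essentially the same route as the paper: the paper's own proof of this theorem is a two-line sketch that records the \textit{Mathematica} identities $\op{Res}_v(A^{\mb x},\delta_8)=9\,r_8(\mb x)^2$ and $\op{Res}_v(B^{\mb x},\delta_8)=4x_7\,J(\mb x)\,r_8(\mb x)$ for part (1) and defers part (2) to the $E_7$ argument together with Lemma~\ref{lem:B0} and Corollary~\ref{cor:A}, which is exactly your plan (including the construction of $\mc H$ by the $E_6$-style substitution and resultant in $u$). One small caution: your one-line justification that $\delta_8(v(\mb x))=0$ in part (2) ("a real pre-image would be incompatible with $r_8(\mb x)=0$") is stated backwards --- points of $\mc E_2$ do have real pre-images with $r_8=0$; the correct mechanism, as in the $E_6$ and $E_7$ proofs you cite, is that $r_8(\mb x)=0$ forces the relevant root of $\delta_8$ to be a common root of $A^{\mb x}$ and $B^{\mb x}$, whence it equals $v(\mb x)$ by the uniqueness in Theorem~\ref{thm:E8}.
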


\begin{proof}
All symbolic computations were carried out with {\it Mathematica}. One finds
\[
\op{Res}_v(A^{\mb x},\delta_8)=9\, r_8(\mb x)^2,\qquad
\op{Res}_v(B^{\mb x},\delta_8)=4x_7\, J(\mb x)\, r_8(\mb x),
\]
where $J$ is a certain polynomial.
So $\delta_8(v(\mb x))=0$ and $A^{\mb x}(v(\mb x))=0$ 
would force $r_8(\mb x)=0$, proving (1).  
For (2), the reconstruction of $\hat v(\mb x)$ via division of
 $g_2^{\mb x}$ by $\delta_8$ proceeds exactly as in the $E_7$ 
case, with the dimension bounds $<7$
 obtained by Lemma~\ref{lem:B0} and Corollary~\ref{cor:A}. 
\end{proof}

By the same argument as in the proof of Corollary~\ref{cor:2024e7},
 we obtain the following.

\begin{Corollary}\label{cor:2024e8}
Fix $\mb x\in \mc Z_\C(\Theta_{E_8})$.
If $r_8(\mb x)\ne 0$, then
there exist $u,v\in \C$ 
such that $h(u,v,\mb x_3)=\mb x$.
\end{Corollary}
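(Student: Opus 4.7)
The plan is to mirror the proofs of Corollaries \ref{cor:2024} and \ref{cor:2024e7} verbatim, substituting the $E_8$-ingredients at each step. The argument splits naturally into three parts, all of which have direct analogues already established.

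First, I would use Theorem \ref{thm:E8main} (which gives $r_8^2 \mid \mc R$) to conclude that $\Theta_{E_8}(\mb x) = 0$ forces $\mc R(\mb x) = \op{Res}_v(A^{\mb x}, B^{\mb x}) = 0$. Since the leading coefficient of $A^{\mb x}(v)$ in $v$ is the nonzero constant $75$ (see \eqref{eq:E8-A}), the standard resultant criterion yields a common root $v \in \C$ of $A^{\mb x}$ and $B^{\mb x}$.

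Next, I would verify $\delta_8(v) \neq 0$ via the identity $\op{Res}_v(A^{\mb x}, \delta_8) = 9\, r_8(\mb x)^2$ recorded in the proof of Theorem \ref{thm:3170}: if $\delta_8(v) = 0$ held, then $v$ would be a common root of $A^{\mb x}$ and $\delta_8$, forcing $r_8(\mb x) = 0$, contrary to the hypothesis. Finally, I would apply the converse direction of Proposition \ref{prop:1322e8}: since $v \in \C$ satisfies $A^{\mb x}(v) = B^{\mb x}(v) = 0$ together with $\delta_8(v) \neq 0$, we obtain $\mb x \in h^\C(\C^7)$, yielding the desired $(u, v) \in \C^2$. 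To make the construction explicit, one sets $u := (-5v^4 - 3x_4 v^2 - 2x_3 v - x_2)/\delta_8(v)$ as in \eqref{eq:E8-u2}; then $g_2^{\mb x}(u,v) = 0$ holds by definition, the relation $A^{\mb x}(v) = 0$ combined with this formula for $u$ reproduces the first equation of \eqref{eq:E8-u2} and so gives $g_1^{\mb x}(u,v) = 0$, while $B^{\mb x}(v) = 0$ encodes $g_0^{\mb x}(u,v) = 0$. Proposition \ref{prop:1742E8} then closes the argument.

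No step presents a genuine obstacle: all necessary ingredients are already in place from the preceding development, and the argument is a line-by-line transcription of the $E_7$ case. The only point requiring attention is correctly quoting the resultant identity $\op{Res}_v(A^{\mb x}, \delta_8) = 9\, r_8(\mb x)^2$ from the proof of Theorem \ref{thm:3170}, but this is purely bookkeeping rather than a substantive difficulty.
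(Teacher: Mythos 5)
Your proposal is correct and follows essentially the same route as the paper, which simply invokes the $E_7$ argument: vanishing of $\Theta_{E_8}$ gives a common root $v$ of $A^{\mb x}$ and $B^{\mb x}$, the identity $\op{Res}_v(A^{\mb x},\delta_8)=9\,r_8(\mb x)^2$ together with $r_8(\mb x)\neq0$ rules out $\delta_8(v)=0$, and then $u$ is recovered from \eqref{eq:E8-u2} exactly as in Corollaries \ref{cor:2024} and \ref{cor:2024e7}. The extra detail you supply (checking $g_i^{\mb x}(u,v)=0$ for $i=0,1,2$ explicitly) is just the content of the converse direction of Proposition \ref{prop:1322e8} and is consistent with the paper.
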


\begin{Thm}\label{Thm:M8}
The polynomial $\Theta_{E_8}(\mb x)$ is a  
main-analytic function of $h(\R^7)$.
\end{Thm}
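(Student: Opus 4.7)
The plan is to replicate the two-sided inclusion argument used for $E_6$ and $E_7$ (Theorems~\ref{Thm:M7} and the $E_6$ theorem), adapting it to the $E_8$ data already assembled. By Theorem~\ref{thm:E8main} we have the factorization $\mc Z(\mc R)=\mc Z(r_8)\cup \mc Z(\Theta_{E_8})$, so combining this with the inclusion $\Xi_{E_8}\setminus \mc Z(r_8)\subset h(\R^7)$ from Theorem~\ref{thm:3170}(1) yields
\[
\mc Z(\Theta_{E_8})\setminus(\mc E_0\cup \mc E_1)
=\mc Z(\mc R)\setminus (\mc E_1\cup \mc Z(r_8))
=\Xi_{E_8}\setminus \mc Z(r_8)\subset h(\R^7),
\]
where $\mc E_0:=\mc Z(r_8)\cap \mc Z(\Theta_{E_8})$ and $\mc E_1$ is the set defined in Theorem~\ref{thm:E8}. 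Since Corollary~\ref{eq:1770e8} and Theorem~\ref{thm:E8} give $\dim_H(\mc E_0)<7$ and $\dim_H(\mc E_1)<7$, this immediately yields the first half of the main-analyticity statement:
\[
\dim_H\bigl(\mc Z(\Theta_{E_8})\setminus h(\R^7)\bigr)<7.
\]

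For the reverse direction, Corollary~\ref{eq:1366e8} combined with Theorem~\ref{thm:3170}(2) gives
\[
h(\R^7)\subset \mc Z(\Theta_{E_8})\cup \mc E_2,
\qquad \mc E_2:=h(\R^7)\cap \mc Z(r_8),
\]
where $\dim_H(\mc E_2)<7$. I then want to upgrade this to $h(\R^7)\subset \mc Z(\Theta_{E_8})$. The key observation is that $h^{-1}(\mc E_2)$ cannot contain an open subset of $\R^7$: since $h$ is a $7$-dimensional real analytic map (Proposition~\ref{prop:1719}), Proposition~\ref{prop:HDIM} applied to any such interior would force $\dim_H(\mc E_2)=7$, contradicting Theorem~\ref{thm:3170}(2). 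Consequently $\R^7\setminus h^{-1}(\mc E_2)$ is dense, so for any $\mb x\in \mc E_2$ with $\mb x=h(u,v(\mb x),\mb x_3)$ we can pick a sequence $(u_n,v_n,\mathbf z_n)\to (u,v(\mb x),\mb x_3)$ in $\R^7\setminus h^{-1}(\mc E_2)$. By $h(u_n,v_n,\mathbf z_n)\in \mc Z(\Theta_{E_8})$ and continuity of $h$ together with the closedness of $\mc Z(\Theta_{E_8})$, the limit $\mb x$ lies in $\mc Z(\Theta_{E_8})$. Hence $\mc E_2\subset \mc Z(\Theta_{E_8})$, which promotes the inclusion to $h(\R^7)\subset \mc Z(\Theta_{E_8})$.

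Putting the two inclusions together, $\mc Z(\Theta_{E_8})$ and $h(\R^7)$ coincide outside a set of Hausdorff dimension strictly less than $7$, so $\Theta_{E_8}$ is a main-analytic function of $h(\R^7)$. No step requires any new computation: all the ingredients (irreducibility and divisibility of $\Theta_{E_8}$, the dimension bounds on $\mc E_0,\mc E_1,\mc E_2$, the real uniqueness of $v(\mb x)$, and the properness of $h^\C$) have already been established in Theorems~\ref{thm:E8main}, \ref{thm:E8}, \ref{thm:3170} and the preceding propositions. The only mildly delicate point is the density argument producing $\mc E_2\subset\mc Z(\Theta_{E_8})$; this is a routine limiting argument but is the one place where the $7$-dimensionality of $h$ (rather than a purely algebraic statement) is genuinely used.
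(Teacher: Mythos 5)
Your proposal is correct and follows essentially the same route as the paper's proof: the same decomposition $\mathcal Z(\mathcal R)=\mathcal Z(r_8)\cup\mathcal Z(\Theta_{E_8})$, the same dimension bounds on $\mathcal E_0,\mathcal E_1,\mathcal E_2$, and the same limiting/density argument (which the paper only sketches by reference to the $E_6$/$E_7$ cases but which you correctly write out). The only difference is cosmetic: you cite Corollary~\ref{eq:1366e8} where the paper's text cites Corollary~\ref{eq:1366e7}, which appears to be a typo in the paper that your version silently fixes.
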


\begin{proof}
Since $\mathcal{Z}(\mathcal{R})=\mathcal{Z}(r_8)\cup \mathcal{Z}(\Theta_{E_8})$, 
we get
\[
\mathcal{Z}(\Theta_{E_8})\setminus (\mathcal{E}_0\cup \mathcal{E}_1)
=
\mathcal{Z}(\mathcal{R})\setminus (\mathcal{E}_1\cup \mathcal{Z}(r_8))
=\Xi_{E_8}\setminus \mathcal{Z}(r_8)\subset h(\mathbb{R}^7).
\]
Hence $\dim_H \bigl(\mathcal{Z}(\Theta_{E_8})\setminus h(\mathbb{R}^7)\bigr)<7$.  
Combined with Corollary \ref{eq:1366e7} and Theorem~\ref{thm:3170}, 
the standard limiting argument (as in $E_7$) shows 
$h(\R^7)\subset \mc Z(\Theta_{E_8})\cup \mc E_2$ 
with $\dim_H(\mc E_2)<7$ and $\mc E_2\subset \mc Z(\Theta_{E_8})$.
\end{proof}

\begin{Rmk}\label{rmk:E8}
As in the cases of $E_6$ and $E_7$, we have
$
h^\C(\C^{7})=\mathcal{Z}_\C(\Theta_{E_8})\subset\C^{8}.
$
Since $h^\C$ is an immersion on an open dense subset of $\C^{7}$,
the same reasoning as in the $E_6$ and $E_7$ cases shows that 
$\Theta_{E_8}$ coincides with the discriminant polynomial of type $E_8$
up to a nonzero scalar, which is known to be irreducible over $\C$.

Assigning weights $(15,10,12,9,6,7,4,1)$ to $(x_0,\ldots,x_7)$, 
we obtain:
\begin{itemize}
\item $r_8(\mb x)$ is a weighted homogeneous polynomial of degree $28$,
\item $\op{Res}_v(A^\mb x,B^\mb x)$ is weighted homogeneous of degree $176$,
\item the discriminant $\Theta_{E_8}$ 
is weighted homogeneous of degree $120$.
\end{itemize}
\end{Rmk}

\begin{acknowledgements}
The authors thank Professors 
Goulwen Fichou,   Toshizumi Fukui,
Atsufumi Honda, Goo Ishikawa, Satoshi Koike and Toru Ohmoto for
 valuable comments and for fruitful discussions.
\end{acknowledgements}

\appendix
\section{Resultants of two polynomials}

Let $a(t), b(t)\in\R[t]$ be nonzero polynomials of degrees $n,m\ge 1$.  
We denote by $\op{Res}_t(a,b)$ the resultant and by $\op{Psc}_t(a,b)$ the
first principal subresultant coefficient (cf. \eqref{eq:550}), 
both defined via 
Sylvester-type matrices. 
It is classical that if $a_0b_0\neq 0$, then
\begin{enumerate}
\item $\op{Res}_t(a,b)=0$ holds if and only if 
$a,b$ have a common root in $\C$,
\item $\op{Res}_t(a,b)=\op{Psc}^{(1)}_t(a,b)=0$
if and only if $a,b$ have at least two common roots in $\C$.
\end{enumerate}
The same conclusions remain valid whenever $(a_0,b_0)\neq (0,0)$:
if, say, $b_0=0$, then 
\[
\op{Res}_t(a,b)=a_0\op{Res}_t(a,\hat b), \qquad
\op{Psc}_t(a,b)=a_0\op{Psc}_t(a,\hat b),
\]
where $\hat b$ is obtained by removing the highest vanishing coefficients of $b$.
Thus the claim follows by induction.

\section{
Estimating the dimension of 
common zero sets of two polynomials}

We begin with the following fundamental fact.

\begin{Proposition}\label{Prop:B0}
Let $a,b \in \Q[x_1,\ldots,x_n]$ be nonzero polynomials.  
If $a$ and $b$ have no nontrivial common factor in $\Q[x_1,\ldots,x_n]$,  
then the real zero set $\mathcal Z(a)\cap \mathcal Z(b)\subset \R^n$  
has dimension strictly less than $n-1$. 
\end{Proposition}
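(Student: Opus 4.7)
The plan is to reduce the problem to a dimension bound over $\C$ via Krull's Hauptidealsatz, and then to transfer it to the real setting through the standard comparison of dimensions between real algebraic sets and their complexifications.

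First I would check that coprimality is preserved under the base change $\Q\subset\C$. Since both $\Q[x_1,\ldots,x_n]$ and $\C[x_1,\ldots,x_n]$ are unique factorization domains and the inclusion is flat, the greatest common divisor is stable up to units. Hence if $a$ and $b$ share no nontrivial common factor over $\Q$, they share no nontrivial common factor over $\C$ either.

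Next I would pass from coprimality to a complex dimension bound. The complex hypersurface $\mc Z_\C(a)\subset\C^n$ is equidimensional of complex dimension $n-1$, and $b$ does not lie in the defining prime ideal of any irreducible component of $\mc Z_\C(a)$, since otherwise that component's defining polynomial would divide both $a$ and $b$, contradicting coprimality over $\C$. Applying Krull's Hauptidealsatz componentwise then yields
\[
\dim_\C\bigl(\mc Z_\C(a)\cap\mc Z_\C(b)\bigr)\le n-2.
\]

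Finally I would transfer this bound to the real locus via the standard fact that for any real algebraic set $V\subset\R^n$ whose complexification $V_\C\subset\C^n$ is defined by the same polynomial equations, one has $\dim_H V\le \dim_\C V_\C$. This follows by decomposing $V$ into its irreducible real algebraic components and using that each such component has Hausdorff dimension equal to the complex dimension of its Zariski closure in $\C^n$. Applying this to $V=\mc Z(a)\cap \mc Z(b)$ gives
\[
\dim_H\bigl(\mc Z(a)\cap \mc Z(b)\bigr)\le n-2<n-1,
\]
which is the desired conclusion. The only delicate step is the real-versus-complex dimension comparison; the lift of coprimality to $\C$ and the Hauptidealsatz bound are essentially formal.
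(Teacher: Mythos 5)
Your proof is correct. Note that the paper itself offers no proof of Proposition~\ref{Prop:B0}: it is stated as a ``fundamental fact,'' so there is no argument to compare yours against; what you have written is the standard justification, and it is sound. The three steps all check out: (i) coprimality in $\Q[x_1,\ldots,x_n]$ persists over $\C$ --- though your appeal to flatness is a little thin as stated, since flatness alone does not formally give stability of gcd's; a cleaner route is that after a generic linear change of coordinates both polynomials become monic in $x_n$ up to constants, and then coprimality in either coefficient field is equivalent to the nonvanishing of the same resultant $\operatorname{Res}_{x_n}(a,b)$; (ii) the Nullstellensatz plus the fact that a proper Zariski-closed subset of an irreducible $(n-1)$-dimensional component has dimension at most $n-2$ (you do not really need the Hauptidealsatz here); and (iii) the comparison $\dim_H V=\dim V=\dim_\C\overline{V}^{\,Z}\le\dim_\C V_\C$ for real algebraic sets, which is exactly the semialgebraic dimension theory the paper already invokes in Appendix~D (Bochnak--Coste--Roy). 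One small caution in step (iii): the Zariski closure of $V$ in $\C^n$ is in general strictly smaller than the complex zero set of the same equations (e.g.\ $x^2+y^2=0$), so the correct statement is the inequality you wrote, not an equality; your phrasing is consistent with this, but it is worth being explicit.
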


In this appendix, we present a more efficient method for estimating
the dimension of a common zero set by focusing on a single variable
and employing the resultant with respect to that variable.

\begin{Lemma}\label{lem:B0}
Let 
$$
a(t,\mb x)=\alpha(\mb x)t^j+\text{\rm $($lower terms$)$}
\qquad (j\ge1)
$$
be a polynomial belonging to $\R[x_0,\dots,x_{n-1}][t]$,
where $\mb x=(x_0,\ldots,x_{n-1})$, and let 
$b(\mb x)\in\R[x_0,\ldots,x_{n-1}]$ be nonzero.  
Then
\begin{align*}
S&:=\{(t,\mb x)\in\R^{n+1}\,;\, a(t,\mb x)
=b(\mb x)=0,\ \alpha(\mb x)\ne0\}, \\
\tilde S&:=\{\mb x \in\R^{n}\,;\, 
\exists\,t\ \text{such that}\,\,
a(t,\mb x)=b(\mb x)=0,\ \alpha(\mb x)\ne0\}
\end{align*}
are semianalytic and have dimension $<n$.
\end{Lemma}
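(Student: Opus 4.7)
The plan is to prove the two parts (semianalyticity and the dimension bound) rather independently. For semianalyticity, both $S$ and $\tilde S$ are cut out by polynomial equalities together with one polynomial non-equality ($\alpha(\mb x)\ne 0$), so in fact they are semialgebraic; $\tilde S$ is the projection of $S$ onto the $\mb x$-factor, and semialgebraicity is preserved under projection by the Tarski--Seidenberg theorem. Since semialgebraic subsets of $\R^N$ are semianalytic, this handles the first claim.

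For the dimension estimate, the main observation is that the projection
\[
\pi:S\longrightarrow \R^{n},\qquad (t,\mb x)\longmapsto \mb x,
\]
has image contained in $\mc Z(b)\setminus \mc Z(\alpha)$, and every fiber $\pi^{-1}(\mb x)$ consists of at most $j$ points: indeed, for $\mb x$ with $\alpha(\mb x)\ne 0$, the polynomial $a(t,\mb x)\in\R[t]$ has leading coefficient $\alpha(\mb x)\ne 0$ and degree $j$, so it has at most $j$ real roots. Hence $\pi:S\to \tilde S$ is a finite-to-one semialgebraic map, and for such maps $\dim S=\dim \tilde S$.

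It therefore suffices to bound $\dim \tilde S$. Since $b(\mb x)\in\R[x_0,\ldots,x_{n-1}]$ is a nonzero polynomial, its real zero set $\mc Z(b)\subset\R^n$ is a proper algebraic subvariety, and standard real-algebraic dimension theory gives $\dim \mc Z(b)\le n-1$. The inclusion $\tilde S\subset \mc Z(b)$ yields $\dim \tilde S\le n-1<n$, and hence $\dim S<n$ as well.

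I do not foresee a genuine obstacle here: the statement is an essentially formal consequence of Tarski--Seidenberg plus the elementary fact that a polynomial with non-vanishing leading coefficient has only finitely many roots. The only point that deserves a careful citation is the assertion that finite-to-one semialgebraic projections preserve dimension, and this is classical (it follows, for instance, from the cylindrical algebraic decomposition or from the additivity formula $\dim S=\dim \pi(S)+\max_{\mb x}\dim \pi^{-1}(\mb x)$ for semialgebraic maps); no further ingredient is needed.
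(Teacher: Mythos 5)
Your proof is correct and follows essentially the same route as the paper: finiteness of the fibers of the projection $\pi(t,\mb x)=\mb x$ over $\{b=0,\ \alpha\neq0\}$ (because $a(\cdot,\mb x)$ has nonvanishing leading coefficient), the bound $\dim\mathcal Z(b)\le n-1$ for a nonzero polynomial $b$, and the transfer of the dimension bound between $S$ and $\tilde S$ via $\pi$. The only cosmetic differences are that the paper phrases the argument in terms of Hausdorff dimension (using that $\pi$ is Lipschitz to get $\dim_H\tilde S\le\dim_H S$) whereas you use semialgebraic dimension theory, and that you explicitly verify the semianalyticity claim via Tarski--Seidenberg, which the paper leaves implicit.
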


\begin{proof}
For each $\mb x$ with $b(\mb x)=0$ and $\alpha(\mb x)\ne0$, the polynomial 
$t\mapsto a(t,\mb x)$ has finitely many real roots. 
Hence,
\[
\dim_H S\le \dim_H \{\mb x\in \R^n \,;\, b(\mb x)=0\} \le n-1.
\]
Since the canonical projection $\pi:\R^{n+1}\to \R^n$
is a Lipschitz map and satisfies $\pi(S)=\tilde S$,
it follows that
$
\dim_H \tilde S\le \dim_H S \le n-1.
$
\end{proof}

Let $a,b\in\R[x_0,\dots,x_{n-1}][t]$ with $\deg_t a,\deg_t b\ge1$ 
and leading coefficients $\alpha(\mathbf x),\beta(\mathbf x)$.
Set
$\mathcal T:=\{\mathbf x\in\R^n:\ \alpha(\mathbf x)=\beta(\mathbf x)=0\}$
and
$\mathcal R(\mathbf x):=\operatorname{Res}_t(a,b)$.

\begin{Proposition}\label{prop:BB}
If there exists $\mathbf c_0\in \R^n$ 
satisfying $\mathcal R(\mathbf c_0)\neq0$,
then the sets
\begin{align*}
S:=\{(t,\mathbf x)\in\R^{n+1}:\ a=b=0,\ \mathbf x\notin\mathcal T\},\\
\tilde S:=\{\mathbf x\in\R^n\setminus\mathcal T:\ \exists\,t\ \text{such that}\,\, a=b=0\}
\end{align*}
satisfy $\dim_H S<n$ and $\dim_H\tilde S<n$.
\end{Proposition}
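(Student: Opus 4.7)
The plan proceeds in three steps, modeled on the argument of Lemma~\ref{lem:B0}.

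First, I would invoke the resultant criterion from Appendix~A. For any $\mathbf{x}_0 \in \R^n \setminus \mathcal{T}$ we have $(\alpha(\mathbf{x}_0), \beta(\mathbf{x}_0)) \ne (0,0)$, so the extended criterion applies to the specialized univariate pair $a(\cdot, \mathbf{x}_0), b(\cdot, \mathbf{x}_0)$: they share a complex root precisely when $\mathcal{R}(\mathbf{x}_0) = 0$. Consequently, if $(t_0,\mathbf{x}_0)\in S$ then $\mathcal{R}(\mathbf{x}_0)=0$, which yields the inclusion
\[
\tilde{S} \;\subset\; \mathcal{Z}(\mathcal{R}) \setminus \mathcal{T}.
\]

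Second, I would exploit the hypothesis $\mathcal{R}(\mathbf{c}_0) \ne 0$, which ensures that $\mathcal{R} \in \R[\mathbf{x}]$ is not the zero polynomial. Its real zero set $\mathcal{Z}(\mathcal{R})$ is therefore a proper real-algebraic subset of $\R^n$, of Hausdorff dimension at most $n-1$. Combined with the inclusion above, this immediately gives $\dim_H \tilde{S} \le n-1 < n$.

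Third, I would transfer the bound from $\tilde{S}$ to $S$ by a finite-fiber argument analogous to the one used inside Lemma~\ref{lem:B0}. For each $\mathbf{x}_0 \in \tilde{S}$, the condition $\mathbf{x}_0 \notin \mathcal{T}$ forces at least one of $\alpha(\mathbf{x}_0), \beta(\mathbf{x}_0)$ to be nonzero; the corresponding polynomial in $t$ thus retains its degree and has only finitely many real roots, so the fiber $(\pi|_S)^{-1}(\mathbf{x}_0)$ of the canonical projection $\pi \colon \R^{n+1} \to \R^n$ is finite, uniformly bounded by $\max(\deg_t a, \deg_t b)$. Since $S$ is semialgebraic (by Tarski--Seidenberg, or simply as a subset of $\mathcal{Z}(a)\cap\mathcal{Z}(b)$ intersected with the open set $\{\mathbf{x}\notin\mathcal{T}\}$) and $\pi|_S$ is finite-to-one onto $\tilde{S}$, one concludes $\dim_H S = \dim_H \tilde{S} \le n-1 < n$, completing the proof.

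The only technical point to watch is the dimension-preserving property of finite-to-one projections. This is automatic in the semialgebraic framework, but if one prefers a bare-hands Hausdorff-dimension argument, one can stratify $\R^n$ by the nonvanishing loci of $\alpha$ and $\beta$ and locally realize $S$, via the implicit function theorem applied to the retained leading polynomial, as a finite union of graphs of real-analytic branches $t=t_j(\mathbf{x})$ over open pieces of $\tilde{S}$; each such graph map is locally Lipschitz, so Hausdorff dimension is preserved. This is the same mechanism that lies behind Lemma~\ref{lem:B0}, and no further difficulty arises.
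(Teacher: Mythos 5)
Your proof is correct and essentially coincides with the paper's: both rest on the inclusion $\tilde S\subset\mathcal Z(\mathcal R)$, the fact that $\mathcal R(\mathbf c_0)\neq0$ forces $\mathcal R\not\equiv0$ so that $\dim_H\mathcal Z(\mathcal R)\le n-1$, and a finite-fiber argument to transfer the bound to $S$. The paper merely packages the last step by covering $S$ with the two sets $\{a=\mathcal R=0,\ \alpha\neq0\}$ and $\{b=\mathcal R=0,\ \beta\neq0\}$ and citing Lemma~\ref{lem:B0} for each, which sidesteps your (valid in the semialgebraic category, but more delicate to justify by hand at multiple roots) appeal to dimension invariance under finite-to-one projections.
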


\begin{proof}
Since $\tilde S\subset\mathcal Z(\mathcal R)$ and $\mathcal R\not\equiv0$, 
we have $\dim_H\mathcal Z(\mathcal R)<n$.
By
Lemma \ref{lem:B0},
\[
S_1:=\{a=\mathcal R=0,\ \alpha\neq0\},\qquad
S_2:=\{b=\mathcal R=0,\ \beta\neq0\}
\]
satisfy 
$\dim_H S_i<n$  for $i=1,2$.
Since $S\subset S_1\cup S_2$, 
we get $\dim_H S<n$ and  also $\dim_H\tilde S<n$.
\end{proof}

\begin{Corollary}\label{cor:A}
If the coefficients of $a,b$ are integers and there exist a prime $p$ 
and $\mathbf c_0\in\Z^n$ with
$\mathcal R(\mathbf c_0)\not\equiv0\pmod p$, then the same dimension estimate holds.
\end{Corollary}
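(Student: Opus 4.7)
The plan is to reduce Corollary~\ref{cor:A} directly to Proposition~\ref{prop:BB}. The only role of the modular condition is to furnish an easy certificate that the resultant polynomial $\mathcal R(\mathbf x)=\operatorname{Res}_t(a,b)$ does not vanish identically, which is exactly the hypothesis one needs to invoke the previous proposition.

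First I would observe that, since the resultant is a universal polynomial in the coefficients of $a$ and $b$ (it is the determinant of the Sylvester matrix, whose entries are either coefficients of $a,b$ or zero), the assumption that the coefficients of $a,b$ lie in $\Z[\mathbf x]$ implies $\mathcal R \in \Z[\mathbf x]$. In particular, $\mathcal R(\mathbf c_0)$ is an integer for every $\mathbf c_0\in \Z^n$, and the reduction mod $p$ commutes with evaluation: $\mathcal R(\mathbf c_0)\bmod p$ equals the value of the resultant computed in $\mathbb F_p[\mathbf x]$ at the image of $\mathbf c_0$.

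Next, from $\mathcal R(\mathbf c_0)\not\equiv 0\pmod p$ I conclude that the integer $\mathcal R(\mathbf c_0)$ is nonzero, hence nonzero as an element of $\R$. Thus the real polynomial $\mathcal R$ is not identically zero on $\R^n$, and the hypothesis of Proposition~\ref{prop:BB} is satisfied with the same $\mathbf c_0$. Applying that proposition immediately yields $\dim_H S<n$ and $\dim_H \tilde S<n$, which is the desired estimate.

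The main (very modest) obstacle, if any, is simply the bookkeeping remark that reduction modulo $p$ commutes with evaluation of a polynomial in $\Z[\mathbf x]$; no further work is required. In practice, the value of the corollary is entirely computational: it replaces the task of producing an explicit real point $\mathbf c_0$ with $\mathcal R(\mathbf c_0)\ne 0$ by the substantially cheaper task of exhibiting one integer point at which $\mathcal R$ is a $p$-adic unit for some small prime~$p$, which is precisely how the criterion is applied in Sections~5--7.
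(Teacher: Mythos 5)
Your proof is correct and is exactly the argument the paper intends (the corollary is stated without proof precisely because it reduces to this observation): the integrality of the Sylvester determinant gives $\mathcal R\in\Z[\mathbf x]$, and $\mathcal R(\mathbf c_0)\not\equiv 0\pmod p$ forces $\mathcal R(\mathbf c_0)\neq 0$ as a real number, so Proposition~\ref{prop:BB} applies with the same $\mathbf c_0$.
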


In practical applications, one usually chooses the 
smallest possible prime $p$ to simplify computations.

\section{A property of a system of three polynomial equations}

Set $\K=\R$ or $\C$.
To investigate properties of the 
images of the standard maps $h_{E_6}, h_{E_7}$ 
and $h_{E_8}$ in a unified manner,
we consider the following three polynomials
\begin{align}
G_0(u,v)&:=2u^3+\gamma_0(v)+uv\delta(v) \label{eq:C0}, \\ 
G_1(u,v)&:=-3u^2+\gamma_1(v) \label{eq:C1}, \qquad
G_2(u,v):=\gamma_2(v)-u \delta(v),
\end{align}
where $\delta(v)$ and $\gamma_i(v)$ ($i=0,1,2$)
 are polynomials in $v$ with real coefficients.

If there exists $u_0,v_0\in \K$ such that
$G_i(u_0,v_0)=0$ for $i=0,1,2$,
then  \eqref{eq:C1} can be rewritten as
\begin{equation}\label{eq:C2b}
3u_0^2=\gamma_1(v_0), \qquad u_0=\gamma_2(v_0)/\delta(v_0),
\end{equation}
where the second equation makes sense whenever $\delta(v_0)\ne 0$.
If we set
\begin{equation}\label{eq:C4}
\alpha(v):=3\gamma_2(v)^2-\delta(v)^2 \gamma_1(v) 
\qquad (v\in \K),
\end{equation}
then $\alpha(v_0)=0$ holds.
Under the assumptions  $G_i(u_0,v_0)=0$ ($i=0,1,2$),
\eqref{eq:C0}, \eqref{eq:C1} and \eqref{eq:C2b} yield
\begin{equation}
0=G_0(u_0,v_0)
=\frac{\gamma_1(v_0)}3\frac{\gamma_2(v_0)}{\delta(v_0)}+\gamma_0(v_0)+u_0v_0
\delta(v_0).
\end{equation}
So, the polynomial
\begin{equation}\label{eq:C6}
\beta(v):=\gamma_1(v)\gamma_2(v)+3 \gamma_0(v)\delta(v)
+\,3v\,\gamma_2(v)\delta(v)
\qquad (v\in \K)
\end{equation}
vanishes at $v=v_0$.

\begin{Lemma}\label{LemC1}
If there exist $u_0,v_0\in \K$ such that
$G_i(u_0,v_0)=0$ for all $i=0,1,2$
then $\alpha(v_0)=\beta(v_0)=0$ holds.
\end{Lemma}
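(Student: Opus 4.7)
The plan is to separate the proof into two cases according to whether $\delta(v_0)$ vanishes, since the computation displayed in the paragraph preceding the lemma implicitly assumed $\delta(v_0)\ne 0$ in order to divide by $\delta(v_0)$ when expressing $u_0=\gamma_2(v_0)/\delta(v_0)$.

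In the main case $\delta(v_0)\ne 0$, I would essentially just record what the text already derived, but rewrite it as a polynomial identity so that no assumption about $\delta(v_0)$ survives in the final conclusion. Concretely, from $G_2(u_0,v_0)=0$ one gets $u_0\delta(v_0)=\gamma_2(v_0)$, and squaring and combining with $G_1(u_0,v_0)=0$ (which gives $3u_0^2=\gamma_1(v_0)$) yields $\gamma_1(v_0)\delta(v_0)^2=3u_0^2\delta(v_0)^2=3\gamma_2(v_0)^2$, i.e.\ $\alpha(v_0)=0$. For $\beta(v_0)$, I would multiply $G_0(u_0,v_0)=0$ by $\delta(v_0)$ (or by $3\delta(v_0)$, following the convention of \eqref{eq:C6}) and systematically replace $u_0\delta(v_0)$ by $\gamma_2(v_0)$ and $3u_0^2$ by $\gamma_1(v_0)$ to convert each term into a polynomial expression in $v_0$; this yields $\beta(v_0)=0$ directly.

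The degenerate case $\delta(v_0)=0$ needs a separate short argument, but it is immediate: $G_2(u_0,v_0)=0$ reduces to $\gamma_2(v_0)=0$, whence every term in
\[
\alpha(v_0)=3\gamma_2(v_0)^2-\delta(v_0)^2\gamma_1(v_0),\qquad \beta(v_0)=\gamma_1(v_0)\gamma_2(v_0)+3\gamma_0(v_0)\delta(v_0)+3v_0\gamma_2(v_0)\delta(v_0)
\]
vanishes factor-by-factor. So $\alpha(v_0)=\beta(v_0)=0$ holds in this case without using $G_0$ or $G_1$ at all.

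There is no real obstacle here; the whole content of the lemma lies in recognising that the two identities derived from $G_1,G_2$ and from $G_0,G_1,G_2$ can be rewritten purely in terms of $\gamma_0,\gamma_1,\gamma_2,\delta$ and $v$, independently of $u_0$. The only point requiring mild care is to phrase the main case so that multiplication by $\delta(v_0)$ (rather than division) is used throughout, so that the resulting identities $\alpha(v_0)=\beta(v_0)=0$ are valid as polynomial evaluations and can be matched consistently with the $\delta(v_0)=0$ case without splitting hairs about denominators.
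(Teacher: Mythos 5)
Your proof is correct and follows essentially the same route as the paper: the main case $\delta(v_0)\ne 0$ is exactly the computation carried out in the text preceding the lemma (your multiplication-based rewriting merely avoids the division by $\delta(v_0)$), and the degenerate case $\delta(v_0)=0$ is handled identically via $G_2(u_0,v_0)=0\Rightarrow\gamma_2(v_0)=0$, after which $\alpha$ and $\beta$ vanish term by term. One useful by-product of your more careful derivation: multiplying $G_0(u_0,v_0)=0$ by $3\delta(v_0)$ and using $3\delta\cdot 2u_0^3=2\,(3u_0^2)(u_0\delta(v_0))$ yields $2\gamma_1(v_0)\gamma_2(v_0)+3\gamma_0(v_0)\delta(v_0)+3v_0\gamma_2(v_0)\delta(v_0)=0$, which is what actually matches $B^{\mathbf{x}}$ in the $E_6$ case, so the coefficient of $\gamma_1\gamma_2$ in \eqref{eq:C6} as printed appears to be missing a factor of $2$ --- a typo your argument would expose but which does not affect the validity of either proof.
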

 
\begin{proof}
We have already observed that the assertion holds if $\delta(v_0)\ne 0$.
So we consider the case $\delta(v_0)=0$.
Then $G_2(u_0,v_0)=0$ implies that $\gamma_2(v_0)=0$, and
$\alpha(v_0)=\beta(v_0)=0$ hold obviously.
\end{proof}

Conversely, we can show the following:

\begin{Proposition}\label{thm:Cm}
If $\delta(v_0)\ne 0$ and $\alpha(v_0)=\beta(v_0)=0$
for some $v_0\in \K$, 
then there exists $u_0\in \K$ such that
$G_i(u_0,v_0)=0$ holds for $i=0,1,2$.
\end{Proposition}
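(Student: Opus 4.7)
The plan is a direct verification: I would construct the unique candidate $u_0$ forced by $v_0$, and then check the three equations $G_i(u_0,v_0)=0$ in turn. Since $\delta(v_0)\ne 0$, the equation $G_2(u_0,v_0)=\gamma_2(v_0)-u_0\delta(v_0)=0$ can be solved uniquely for $u_0$, which suggests the definition
\[
u_0:=\frac{\gamma_2(v_0)}{\delta(v_0)}.
\]
With this choice, $G_2(u_0,v_0)=0$ holds by construction. Substituting into $G_1$ and invoking $\alpha(v_0)=0$ gives
\[
G_1(u_0,v_0)=-3u_0^2+\gamma_1(v_0)
=-\frac{3\gamma_2(v_0)^2-\delta(v_0)^2\gamma_1(v_0)}{\delta(v_0)^2}
=-\frac{\alpha(v_0)}{\delta(v_0)^2}=0.
\]
As a byproduct, I will have the identity $u_0^2=\gamma_1(v_0)/3$ available for the final step.

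It will then remain to verify $G_0(u_0,v_0)=0$. The plan here is to compute $u_0^3=u_0\cdot u_0^2$ using the two expressions $u_0=\gamma_2(v_0)/\delta(v_0)$ and $u_0^2=\gamma_1(v_0)/3$ already established, substitute into $G_0(u_0,v_0)=2u_0^3+\gamma_0(v_0)+u_0 v_0\delta(v_0)$, and clear denominators by multiplying through by $3\delta(v_0)$. The outcome will be a linear combination of $\gamma_1(v_0)\gamma_2(v_0)$, $\gamma_0(v_0)\delta(v_0)$, and $v_0\gamma_2(v_0)\delta(v_0)$, which matches the defining expression of $\beta(v_0)$, and hence vanishes by the second hypothesis. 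Since $\delta(v_0)\ne 0$, this yields $G_0(u_0,v_0)=0$.

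The argument is entirely algebraic, so I do not anticipate a genuine obstacle. The only point requiring care is the order of the substitutions: one should use $\alpha(v_0)=0$ \emph{first}, to rewrite $u_0^2$ in terms of $\gamma_1(v_0)$, and only \emph{then} invoke $\beta(v_0)=0$. Indeed, this proposition is precisely the converse of the elimination procedure carried out just before its statement (which produced $\alpha$ and $\beta$ by eliminating $u$ from $G_0=G_1=G_2=0$ under the assumption $\delta(v_0)\ne 0$); here I simply run that elimination backwards, and the hypothesis $\delta(v_0)\ne 0$ is exactly what legitimizes reversing the step where one divided by $\delta(v_0)$.
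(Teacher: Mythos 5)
Your proposal is correct and is essentially the paper's own proof: set $u_0:=\gamma_2(v_0)/\delta(v_0)$, get $G_2=0$ by construction, $G_1=0$ from $\alpha(v_0)=0$, and $G_0=0$ from $\beta(v_0)=0$ after clearing denominators. One remark: your final computation yields $3\delta(v_0)G_0(u_0,v_0)=2\gamma_1(v_0)\gamma_2(v_0)+3\gamma_0(v_0)\delta(v_0)+3v_0\gamma_2(v_0)\delta(v_0)$, whose first coefficient is $2$ rather than the $1$ printed in \eqref{eq:C6}; this is a typo in the paper (the term $2u_0^3$ forces the $2$, and the explicit $B^{\mb x}$ of Section~5 confirms it), so your argument stands with the corrected $\beta$.
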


\begin{proof}
Since $\delta(v_0)\ne 0$,
we can set
$
u_0:={\gamma_2(v_0)}/{\delta(v_0)},
$
then $G_2(u_0,v_0)=0$ holds.
Moreover, $\alpha(v_0)=0$ implies 
$u_0^2={\gamma_1(v_0)}/3$. So we have $G_1(u_0,v_0)=0$.
Then $\beta(v_0)=0$ implies $G_0(u_0,v_0)=0$.
\end{proof}

\section{A criterion for global main-analyticity}

To give a criterion for global main-analyticity as mentioned
in the introduction, we here recall the following fact:

\begin{Fact}\label{fact:Coste}
Let 
$
    f : \mathbb{R}^m \to \mathbb{R}^n \,\, (m \le n)
$
be a polynomial map.  
Assume that $f$ is \emph{generically injective}, that is, 
there exists a Zariski open dense subset 
$U \subset \mathbb{R}^m$ such that the restriction $f|_U$ is injective.
Put $S := f(\mathbb{R}^m)$ and let 
$\overline{S}^{\,Z} \subset \mathbb{R}^n$ denote the Zariski closure of $S$.
Then the complement $\overline{S}^{\,Z} \setminus S$ has $($algebraic$)$ 
dimension
strictly less than $m$.
\end{Fact}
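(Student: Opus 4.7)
The plan is to complexify and invoke a parity argument for the generic fibre of $f^\C$. Extend $f$ to the polynomial map $f^\C:\C^m\to\C^n$ and set $V_\C:=\overline{f^\C(\C^m)}^{\,Z}\subset\C^n$. As the Zariski closure of the image of the irreducible variety $\C^m$, $V_\C$ is irreducible, and generic injectivity of $f$ forces the Jacobian of $f$ to have maximal rank $m$ on a Zariski open subset of $\R^m$ (otherwise $f$ would have positive-dimensional fibres on an open set, contradicting injectivity on $U$), so $\dim_\C V_\C=m$. First I would verify the identification $V_\C\cap\R^n=\overline S^{\,Z}$: a polynomial $P\in\R[x_1,\ldots,x_n]$ vanishes on $S$ if and only if $P\circ f$ vanishes on $\R^m$, and since $\R^m$ is Zariski dense in $\C^m$, this is equivalent to $P$ vanishing on $f^\C(\C^m)$; hence the real ideal of $\overline S^{\,Z}$ generates, over $\C$, the defining ideal of $V_\C$.

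Next I would apply Chevalley's theorem: $f^\C(\C^m)$ is a constructible subset of $\C^n$ containing a Zariski open dense subset of $V_\C$. Choose a proper Zariski closed subset $W_\C\subsetneq V_\C$, enlarged so as to be stable under complex conjugation (hence defined over $\R$), such that over $V_\C\setminus W_\C$ the restriction of $f^\C$ is a finite étale cover of some degree $d\ge1$. The key step is to determine the parity of $d$. Pick a Zariski-generic point $p\in U$; since $f(\R^m\setminus U)$ has semialgebraic dimension at most $m-1$ (a consequence of Tarski–Seidenberg applied to the polynomial map $f$) and $W_\C(\R)$ has real dimension at most $\dim_\C W_\C<m$, one may arrange $f(p)\notin f(\R^m\setminus U)\cup W_\C(\R)$. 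Every real preimage of $f(p)$ then lies in $U$, and the injectivity of $f|_U$ shows that $p$ is the only one. Combined with the pairing of non-real preimages by complex conjugation, this yields that $d-1$ is even, so $d$ is odd.

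The remainder is immediate. For any $y\in V_\C(\R)\setminus W_\C(\R)$, the fibre $(f^\C)^{-1}(y)$ consists of exactly $d$ points; complex conjugation permutes them with orbits of size $1$ (real preimages) or $2$ (conjugate pairs), so the number of real preimages is congruent to $d$ modulo $2$, hence is odd and in particular nonzero. Consequently $y\in S$, giving
\[
\overline S^{\,Z}\setminus S \;=\; V_\C(\R)\setminus S \;\subset\; W_\C(\R),
\]
whose algebraic dimension is at most $\dim_\C W_\C<m$. The hard part will be the parity step: one must ensure simultaneously that the chosen generic $p\in U$ avoids the preimage of both the branch locus and $f(\R^m\setminus U)$, and that $W_\C$ can be taken $\op{Gal}(\C/\R)$-stable so that $W_\C(\R)$ is a genuine real algebraic set of the anticipated dimension. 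A subsidiary technicality is the dimension estimate $\dim f(\R^m\setminus U)\le m-1$, which must be justified via the semialgebraic structure of polynomial maps together with the rank argument that places the generic rank of $df$ equal to $m$.
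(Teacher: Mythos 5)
Your proposal is correct, and it is worth noting that the paper itself does not prove this Fact at all: it is stated as known and attributed to Bochnak--Coste--Roy and to Coste's Corollary~3.19, applied to the regular map $f:\R^m\to\overline{S}^{\,Z}$. You therefore supply an actual proof where the paper supplies only a citation. The route you take --- complexify, identify $\overline{S}^{\,Z}$ with the real points of the irreducible $m$-dimensional complex Zariski closure $V_\C$, remove a conjugation-stable proper closed subset $W_\C$ off which $f^\C$ is finite \'etale of degree $d$, prove $d$ is odd by producing one real point of $V_\C\setminus W_\C$ with exactly one real preimage, and conclude by the conjugation-pairing of fibres --- is precisely the classical parity-of-fibres argument that underlies the cited result, so the two approaches are not in tension; yours is simply self-contained. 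Three small points should be stated as precise lemmas rather than asides: (i) the strong form of generic finiteness in characteristic $0$, namely that one can choose $W_\C$ so that the \emph{entire} preimage $(f^\C)^{-1}(V_\C\setminus W_\C)$ is finite and \'etale over $V_\C\setminus W_\C$ (otherwise fibre points could ``escape'' and the count $\#(f^\C)^{-1}(y)=d$ would fail); (ii) the fact that $\dim f(U)=m$ because $f|_U$ is an injective semialgebraic map, which is what guarantees the existence of the base point $p\in U$ with $f(p)$ outside the lower-dimensional set $f(\R^m\setminus U)\cup (W_\C\cap\R^n)$; and (iii) in the rank argument, the verification that a positive-dimensional fibre through a point of maximal (deficient) rank must meet the dense open set $U$ in at least two points, which follows from the local normal form of a constant-rank map. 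With these made explicit, the proof is complete and compatible with the paper's intended use of the Fact in Proposition~\ref{thm:Coste-main-analytic-J}.
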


This fact follows from standard results in real algebraic geometry
(see, for instance, Bochnak--Coste--Roy~\cite[\S 2.8 and \S 3.3]{BCR98}). 
In our setting, it is obtained as a special case of 
Coste~\cite[Corollary~3.19]{Coste2007}, applied to the regular map
$f : \mathbb{R}^m \to \overline{S}^{\,Z}$, where $\mathbb{R}^m$ is regarded 
as a nonsingular real algebraic variety.

\begin{Prop}
\label{thm:Coste-main-analytic-J} 
Let 
$
    f : \mathbb{R}^m \to \mathbb{R}^n \,\, (m \le n)
$
be a polynomial map.  
Assume that there exists an open dense subset $U \subset \mathbb{R}^m$ such that
\begin{enumerate}
\item[(1)]
$f$ is an immersion on $U$, i.e.\ $\operatorname{rank}(df_x)=m$ for all $x\in U$,
\item[(2)]
$f|_U$ is injective.
\end{enumerate}
Then $S := f(\mathbb{R}^m)$ is a global main-analytic set.
Moreover, one may take a main-analytic function $\Theta$ to be a real 
polynomial on $\mathbb{R}^n$.
\end{Prop}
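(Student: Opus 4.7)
The plan is to take $\Theta$ to be a polynomial whose real zero set coincides with the Zariski closure $\overline{S}^{\,Z}\subset\R^n$ of $S:=f(\R^m)$. Since every real algebraic subset of $\R^n$ is the zero locus of a single real polynomial (for instance, the sum of squares of finitely many generators of its vanishing ideal), such a $\Theta\in\R[x_1,\dots,x_n]$ exists, and by construction $\mc Z(\Theta)=\overline{S}^{\,Z}\supset S$.

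The next step is to invoke Fact~\ref{fact:Coste}. For this one needs injectivity on a Zariski open dense subset rather than merely a topologically open dense one, but the two are essentially equivalent for polynomial maps. Consider the incidence variety
\[
W:=\{(x,y)\in\R^m\times\R^m\,;\,x\ne y,\ f(x)=f(y)\},
\]
which is semialgebraic by Tarski--Seidenberg. Its first projection $\pi_1(W)$ is semialgebraic as well, and the injectivity hypothesis forces $\pi_1(W)$ to have empty interior: any double point of $f$ would yield two distinct points of $U$ with equal image. Hence $\dim\pi_1(W)<m$ in the semialgebraic sense, so its Zariski closure is a proper real algebraic subvariety whose complement $U'\subset\R^m$ is Zariski open dense and satisfies $f|_{U'}$ injective. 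Fact~\ref{fact:Coste} then yields $\dim(\overline{S}^{\,Z}\setminus S)<m$ in the algebraic sense, and hence also in Hausdorff dimension since the two notions coincide for real algebraic sets.

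To conclude, I would use the immersion assumption~(1): because $f$ is a polynomial map with $\op{rank}(df)=m$ on an open dense subset, $f$ is an $m$-dimensional real analytic map in the sense of the paper, and Proposition~\ref{prop:HDIM} gives $\dim_H S=m$. Therefore
\[
\dim_H\mc Z(\Theta)=\dim_H\overline{S}^{\,Z}=\max\bigl(\dim_H S,\ \dim_H(\overline{S}^{\,Z}\setminus S)\bigr)=m,
\]
while $\mc Z(\Theta)\setminus S=\overline{S}^{\,Z}\setminus S$ has strictly smaller Hausdorff dimension. Together with $S\subset\mc Z(\Theta)$, these are exactly the three conditions defining a global main-analytic set, and $\Theta$ is a real polynomial by construction.

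The main obstacle is the passage from the topologically open dense set $U$ to a Zariski open dense one needed to apply Fact~\ref{fact:Coste}; this relies on Tarski--Seidenberg and the semialgebraic dimension theory of~\cite{BCR98}, in particular the fact that a semialgebraic subset of $\R^m$ with empty interior has dimension strictly less than $m$. Once this technical point is cleanly formulated, the remaining argument is direct bookkeeping combining the algebraic dimension estimate of Fact~\ref{fact:Coste} with the Hausdorff dimension computation of Proposition~\ref{prop:HDIM}.
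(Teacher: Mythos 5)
Your argument is essentially the paper's own proof: take $\Theta$ to be a defining polynomial of the Zariski closure $\overline{S}^{\,Z}$, upgrade topological generic injectivity to Zariski generic injectivity via the semialgebraic double-point locus, apply Fact~\ref{fact:Coste}, and match algebraic with Hausdorff dimensions using Proposition~\ref{prop:HDIM}. One small caveat, shared with the paper's own write-up: the one-line reason you give for $\pi_1(W)$ having empty interior is too quick, since a double point may pair a point of $U$ with a point \emph{outside} $U$; the conclusion $\dim\pi_1(W)<m$ is still correct, but it needs a short extra argument (e.g.\ a semialgebraic selection of partners on an open ball $B\subset U\cap\pi_1(W)$ would produce an $m$-dimensional semialgebraic subset of the nowhere dense set $\R^m\setminus U$, a contradiction).
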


\begin{proof}
By assumption (2), there exists an open dense subset 
$U \subset \mathbb{R}^m$ on which $f$ is injective.  
Let
\[
T := \{x\in\mathbb{R}^m \mid \exists x'\neq x,\ f(x)=f(x')\}.
\]
Then $T$ is semialgebraic and contained in the nowhere dense set
$\mathbb{R}^m\setminus U$, hence $\dim T<m$, where $\dim$ denotes
the algebraic dimension of $T$.
In particular, the Zariski closure $\overline{T}^{\,Z}$ is a proper algebraic
subset of $\mathbb{R}^m$, and $f$ is generically injective in the sense of
Fact~\ref{fact:Coste}.

By the Tarski--Seidenberg theorem, the image $S=f(\mathbb{R}^m)$ is a
semialgebraic subset of $\mathbb{R}^n$, and 
$\overline{S}^{\,Z}$ is a real algebraic set. 
Since $f|_U$ is an $m$-dimensional real
analytic map, $\dim_H(S)=m$ holds (cf.\ Proposition \ref{prop:HDIM}).
Since $S$ is semi-algebraic, its Hausdorff dimension coincides with its
(algebraic) dimension; thus $\dim S = m$.
In particular, the algebraic
dimension of $\overline{S}^{\,Z}$ also equals $m$.

There exists a real polynomial $\Theta$ defining the algebraic set
$\overline{S}^{\,Z}$ so that
$
   \mathcal Z(\Theta) = \overline{S}^{\,Z} \supset S.
$
Since $\overline{S}^{\,Z}$ has (algebraic) dimension $m$, 
its Hausdorff dimension also equals $m$. Hence
$
   \dim_H(S) = \dim_H(\mathcal{Z}(\Theta)) = m.
$

Moreover, by Fact~\ref{fact:Coste}, the complement
$\overline{S}^{\,Z}\setminus S$ has (algebraic) 
dimension strictly less than
$m$.  This set is semialgebraic, 
so its Hausdorff dimension coincides with its
(algebraic) dimension.  Therefore
\[
   \dim_H\bigl(\mathcal{Z}(\Theta)\setminus S\bigr)
   = \dim_H\bigl(\overline{S}^{\,Z}\setminus S\bigr)
   < m = \dim_H(S).
\]
So $S$ is a global main-analytic set with $\Theta$ as a main-analytic
function on $\mathbb{R}^n$.
\end{proof}

\begin{Rmk}
If $f$ is a real-analytic map but not a polynomial,
then the conclusion of Proposition~\ref{thm:Coste-main-analytic-J}
may fail.
For example, the Osgood map
$$
f_O(u,v) := (u,\, uv,\, uve^v) \qquad (u,v\in\R)
$$
is injective on its regular set, but its image cannot be globally
main-analytic.
Indeed, it is a classical fact that any real-analytic function
vanishing on $f_O(\R^2)$ near the origin must be identically zero.
Hence no nontrivial real-analytic function can have $f_O(\R^2)$ in its
zero set.
\end{Rmk}


\begin{thebibliography}{00}

\bibitem{A}
V.\,I.~Arnol'd,
{\it Normal forms of functions near degenerate critical points, 
the Weyl groups $A_k$, $D_k$, $E_k$ and Lagrangian singularities},
Funct.\ Anal.\ Appl.\ {\bf 6} (1972), 254--272.


\bibitem{BCR98}
J.~Bochnak, M.~Coste, and M.-F.~Roy,
\textit{Real Algebraic Geometry},
Ergebnisse der Mathematik und ihrer Grenzgebiete (3), Vol.~36,
Springer-Verlag, Berlin, 1998.

\bibitem{Coste2007}
M.~Coste,
\newblock \emph{Real Algebraic Sets},
\newblock in: T.~Fukui, L.~van~den~Dries, J.~Koll\'ar (eds.),
\newblock \emph{Arc Spaces and Additive Invariants 
in Real Algebraic and Analytic Geometry},
Panoramas et Synth\`eses~24,
Soci\'et\'e Math\'ematique de France,
(2007), 109--156.

\bibitem{FKKRUYY2}
S.~Fujimori, Y.~Kawakami, M.~Kokubu, W.~Rossman,
M.~Umehara, K.~Yamada and S.-D.~Yang,
{\it Unextendability of real analytic map images}, 
in preparation.

\bibitem{IKS}
G.~Ishikawa, S.~Koike and M.~Shiota,
{\it Critical value sets of generic mappings},
Pacific J.\ Math.\ {\bf 114} (1984), 165--174.

\bibitem{Loo}
E.~Looijenga,
{\it The discriminant of a real simple singularity},
Compositio Math.\ {\bf 37} (1978), 51--82.


\bibitem{Sa1}
K.~Saito,
{\it On a linear structure of the quotient variety by a finite reflection group},
Publ.\ Res.\ Inst.\ Math.\ Sci.\ {\bf 29} (1975/76), 535--579.


\bibitem{S}
K.~Saji,
{\it Criteria for Morin singularities into higher dimension 
$($The theory of singularities of smooth mappings and around it$)$},
Surikaiseki Kenkyusho Kokyuroku Bessatsu~B55 (2016), 205--224.



\bibitem{V}	
L.\,G.~Vega,
{\it A subresultant theory for multivariate polynomials},
Proc.\ ISSAC~'91 (1991), 79--85.

\end{thebibliography}
\end{document}